\theoremstyle{plain}
\newtheorem{theorem}{Theorem}[section]
\newtheorem{corollary}[theorem]{Corollary}
\newtheorem{lemma}[theorem]{Lemma}
\newtheorem{claim}[theorem]{Claim}
\newtheorem{proposition}[theorem]{Proposition}
\newtheorem{problem}{Problem}
\theoremstyle{definition}
\newtheorem{definition}[theorem]{Definition}
\newtheorem{remark}[theorem]{Remark}
\newcommand{\mcV}{V}
\newcommand{\mcU}{U}
\newcommand{\mcW}{W}
\newcommand{\mcB}{\mathcal{B}}
\newcommand{\Xspace}{X}
\newcommand{\Gspace}{G}
\newcommand{\Yspace}{Y}
\newcommand{\Hspace}{H}
\newcommand{\BPi}{\mathbf{\Pi}}
\newcommand{\BSigma}{\mathbf{\Sigma}}
\newcommand{\wreath}{\text{ Wr }}
\begin{document}

\title[Obstructions to classification by (co)homology and other TSI invariants]{Dynamical obstructions to classification by (co)homology and other TSI-group invariants}

\author{Shaun Allison}
\address{Department of Mathematical Sciences, Carnegie Mellon University, Pittsburgh, PA 15213}
\email{sallison@andrew.cmu.edu}
\urladdr{https://www.math.cmu.edu/~sallison/}

\author{Aristotelis Panagiotopoulos}
\address{Mathematics Department, Caltech, 1200 E. California Blvd,
Pasadena, CA 91125}
\email{panagio@caltech.edu}
\urladdr{http://www.its.caltech.edu/~panagio/}

\thanks{We are grateful to A. Shani,  M. Lupini,  J. Bergfalk, and A.S. Kechris for all the useful and inspiring discussions, as well as to  S. Coskey  and J.D. Clemens for sharing  an early draft of \cite{ClemensCoskey} with us.
We would also like to thank the anonymous referee for their valuable comments and for raising our attention to several subtle errors in an earlier version of this paper.
Finally,  we want to acknowledge the hospitality and financial support of the California Institute of Technology during the visit of S.A. in the winter of 2020
}
      	
\subjclass[2000]{Primary 54H05, 37B02, 54H11; Secondary 46L35, 55R15}

\keywords{Polish group, invariant metric, generically ergodic, turbulence,  TSI, CLI, Borel reduction,  continuous-trace $C^*$-algebra, Morita equivalence, Hermitian line bundle}

\begin{abstract}
In the spirit of Hjorth's turbulence theory, we introduce ``unbalancedness": a new dynamical obstruction to classifying orbit equivalence relations by actions of Polish groups  which  admit a two side invariant metric (TSI). Since abelian groups are TSI, unbalancedness can be used for identifying which classification problems cannot be solved  by classical homology and cohomology theories.

In terms of applications, we show that Morita equivalence of continuous-trace $C^*$-algebras, as well as isomorphism of Hermitian line bundles, are not classifiable by  actions of TSI groups. In the process, we show that the Wreath product of any two non-compact subgroups of $S_{\infty}$ admits 
an action whose orbit equivalence relation is generically ergodic against any action of a TSI group and we deduce that there is an orbit equivalence relation of a CLI group which is not classifiable by actions of TSI groups.

\end{abstract}

\maketitle

\section{Introduction}

\subsection{Classification by (co)homological invariants}
One of the leading questions in many mathematical research programs is  whether a certain classification problem admits a ``satisfactory" solution. What constitutes 
a satisfactory solution depends of course on the context, and it is often subject to change 
when the original goals are deemed hopeless. \emph{Invariant descriptive set theory} provides a formal framework for measuring the complexity of  classification problems and for showing which types of
 invariants are inadequate for complete classification.

 Hjorth's turbulence theory is one of the biggest accomplishments in this direction as it provides obstructions to  \emph{classification by countable structures}, i.e., for classification using only isomorphism types of countable structures as invariants. Historically, this type of classification played important role in ergodic theory since von Neumann's seminal paper 
 \cite{NeumannZurOI},  where he shows that ergodic measure-preserving transformations of  discrete spectrum  are completely classified up to isomorphism by their (countable) spectrum.  This led to the hope that, by adding further structure on these countable invariants, one could classify all ergodic measure-preserving transformations via countable structures. However, 70 years after the publication of \cite{NeumannZurOI}, the  newly developed theory of turbulence \cite{Hjorth2010} was used to establish that this hope was too optimistic; see \cite{Hjorth2001, Foreman2004AnAT}.

Another very common type of classification that one encounters in mathematical practice is  {\bf classification by (co)homological invariants}, i.e., classification using elements of homology or cohomology groups of some appropriate  chain complex. For example:
\begin{enumerate}
\item there is an assignment $p\mapsto c(p)$, from Hermitian line bundles  $p\colon E\to B$ over a locally compact metrizable space $B$,  to the  \emph{\v{C}ech cohomology group} $\mathrm{H}^2(B)$  of $B$, so that $p$ and $p'$ are isomorphic over $B$ iff $c(p)=c(p')$; see \cite{Morita}.
\item there is an assignment $\mathcal{A}\mapsto c(\mathcal{A})$, 
from continuous-trace $C^{*}$-algebras $\mathcal{A}$  with  locally compact metrizable spectrum $S$,  to  the  \emph{\v{C}ech cohomology group} $\mathrm{H}^3(S)$  of $S$, so that $\mathcal{A}$ and $\mathcal{A}'$ are Morita equivalent over $S$,  iff $c(\mathcal{A})=c(\mathcal{A}')$; see \cite{Blackadar}.
\item there is an assignment $e\mapsto c(e)$, from group extensions   $e\colon E \to \mathbb{Q}_{2}$ of the dyadic rationals $\mathbb{Q}_{2}$ by $\mathbb{Z}$, to the \emph{Steenrod homology group} $\mathrm{H}_0(\Sigma)$ of the character $\Sigma$ of $\mathbb{Q}_{2}$, so that $e$ and $e'$ are isomorphic iff  $c(e)=c(e')$; see \cite{eilenberg_group_1942}. 
\end{enumerate}
Up until recently,  classification by (co)homological invariants had not been considered from the perspective of invariant descriptive set theory. However, as a consequence of the results in \cite{BLP2019}, classification by the above (co)homological invariants  forms a well defined complexity class in the standard setup of invariant descriptive set theory. This complexity class  is entirely contained within the class of all classification problems which are  {\bf classifiable by abelian group actions}; see Problem \ref{Problem}. As a consequence,
Corollary \ref{Cor:Main} below provides a new anti-classification criterion,
in the spirit of Hjorth's turbulence theory,   which can be used as
an obstruction to classification by (co)homological invariants.
To illustrate its use, we will apply it to show that the \emph{coordinate free} versions of Hermitian line bundle isomorphism and of Morita equivalence between continuous-trace $C^{*}$-algebras cannot be classified by (co)homological invariants as in the examples (1) and (2) above.


\subsection{Classification problems}
The formal framework that is often used for measuring the complexity of  classification problems is the Borel reduction hierarchy.
Formally, a {\bf classification problem} is a pair $(\Xspace,E)$, where $\Xspace$ is a Polish space and $E$ is an analytic equivalence relation. A classification problem $(\Xspace,E)$ is considered to be of ``less or equal complexity" to the classification problem $(\Yspace,F)$, if there is a {\bf Borel reduction from $E$ to $F$}, i.e., a Borel map $f\colon \Xspace\to \Yspace$, so that for all $x,x'\in \Xspace$ we have: 
\[x E x' \iff f(x) F f(x').\]
At the lower end of this complexity hierarchy we have---in increasing complexity---the classification problems which are: concretely classifiable; essentially countable; and classifiable by countable structures.
A classification problem $(\Xspace,E)$ is  {\bf concrete classifiable} if $E$ Borel reduces to the equality relation of some Polish space. It is  {\bf essentially countable} if it Borel reduces to a Borel equivalence relation $F$ which has  equivalence classes of countable size. We finally say that $(\Xspace,E)$ is {\bf classifiable by countable structures} if $(\Xspace,E)$ Borel reduces to the problem $(\Xspace_{\mathcal{L}},\simeq_{\mathrm{iso}})$, of classifying, up to isomorphism, all countable $\mathcal{L}$-structures (graphs, groups, rings, etc.) of some fixed language $\mathcal{L}$. 

In order to show that two classification problems differ in complexity, it is imperative to have  a basic obstruction theory for Borel reductions.  These obstructions often come from dynamics and they directly apply to classification problems of the form $(\Xspace,E^\Gspace_\Xspace)$, where  $E^\Gspace_\Xspace$ is the  \emph{orbit equivalence relation} of the continuous action of a Polish group $\Gspace$ on a Polish space $\Xspace$.
A classical dynamical obstruction to concrete classification is \emph{generic ergodicity}; see \cite{Gao2008}. 
Similarly, Hjorth's \emph{turbulence theory} \cite{Hjorth2010} provides obstructions to classifiability by countable structures. Finally, \emph{storminess} \cite{Hjorth2005}, as well as \emph{local approximability} \cite{KMPZ2019}, are both dynamical obstruction to being essentially countable.

These dynamical obstructons can be seen to answer the following general problem that was considered in \cite{LupiniPanagio2018}: if  $\mathcal{C}$ is a class of Polish groups, then we say that $(\Xspace,E)$ is {\bf classifiable by $\mathcal{C}$-group actions} if it is Borel reducible to an orbit equivalence relation $(\Yspace,E^\Hspace_\Yspace)$, where $\Hspace$ is a group from $\mathcal{C}$. 
\begin{problem}\label{Problem}
Given a class $\mathcal{C}$ of Polish groups, which dynamical conditions on a Polish $\Gspace$-space $\Xspace$ ensure that  $(\Xspace,E^{\Gspace}_{\Xspace})$ is not classifiable by $\mathcal{C}$-group actions?
\end{problem}
Indeed, generic ergodicity provides an answer  for  $\mathcal{C}=\{\text{compact Polish groups}\}$;
turbulence for $\mathcal{C}=\{\text{non-Archimedean Polish groups}\}$; while storminess and local-approximability for the class $\mathcal{C}=\{\text{locally-compact Polish groups}\}$. More recently, an answer to this problem for the case where $\mathcal{C}$ is the class of all Polish CLI groups has been given \cite{LupiniPanagio2018}. Recall that a Polish group is {\bf CLI} if it admits a complete and left-invariant metric. 
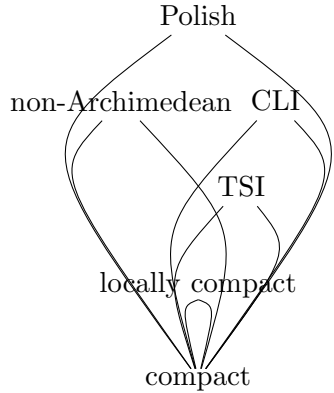
\begin{figure}[!htbp]
    \centering
\begin{tikzpicture}[scale=0.57]
\node (f) at (0, 10) {Polish};
\node (e) at (1.8, 8) {CLI};
\node (t) at (1, 6) {TSI};
\node (d) at (-1.8, 8) {non-Archimedean};
\node (c) at (0, 3.7) {locally compact};
\node (b) at (0, 1.5) {};
\node (b') at (0, 1.5) {compact};

\draw (b) .. controls (-0.4,3.2) .. (0, 3.35);
\draw (b) .. controls (0.4,3.2) .. (0, 3.35);
\draw (b) .. controls (-3.4,6.3) .. (d);
\draw (b) .. controls (1,5) .. (d);
\draw (b) .. controls (3.4,6.3) .. (e);
\draw (b) .. controls (-1,5) .. (e);
\draw (b) .. controls (2.2,4.5) .. (t);
\draw (b) .. controls (-0.8,4) .. (t);
\draw (b) .. controls (-4,7) .. (f);
\draw (b) .. controls (4,7) .. (f);
\end{tikzpicture}
    \caption{Classification by $\mathcal{C}$-group actions for various group classes $\mathcal{C}$.}
    \label{fig1}
\end{figure}

The main goal of this paper is to provide an answer to the above problem for the case where $\mathcal{C}$ is the class of all Polish groups which admit a two side invariant ({\bf TSI})  metric.  Since every abelian group is TSI the obstructions we introduce can be used for showing when a classification problem is not classifiable by (co)homological invariants.

\subsection{Definitions and main results} \label{Intro_2}

A {\bf Polish space} $\Xspace$ is a separable, completely metrizable topological space. A {\bf Polish group} $\Gspace$ is a topological group whose topology is Polish. Let $d$ be a metric on $\Gspace$ that is compatible with the topology. We say that $d$ is {\bf left invariant} if $d(gh,gh')=d(h,h')$, for all $g,h,h'\in \Gspace$ and {\bf right invariant} if $d(hg,h'g)=d(h,h')$, for all $g,h,h'\in \Gspace$. We say that $d$ is {\bf two side invariant}, if it is both left and right invariant. We say that a Polish group is {\bf TSI}, if it admits a two sided invariant metric which is compatible with the topology. Such groups are often called {\bf balanced} since, by a theorem of Klee, they are precisely the Polish groups which admit a neighborhood basis of the identity consisting of conjugation-invariant open sets. We say that $\Gspace$ is {\bf non-Archimedean}, if it admits a basis of open neighborhoods of the identity consisting of open subgroups.
A {\bf Polish $\Gspace$-space} is a Polish space $\Xspace$  together with a continuous left action of a Polish group $\Gspace$ on $\Xspace$. If $x\in \Xspace$, we write $[x]$ to denote the orbit $\Gspace  x$ of $x$. We  denote by $E^\Gspace_\Xspace$ the associated {\bf orbit equivalence relation}: 
\[x E^\Gspace_\Xspace y  \iff  [x]=[y].\]

\begin{definition}
Let $\Xspace$ be a Polish $\Gspace$-space and let $x,y\in\Xspace$. We write  $x \leftrightsquigarrow	 y$, if for every open neighborhood $\mcV$ of the identity of $\Gspace$ and every open set $U\subseteq X$ having nonempty intersection with the orbit of $x$ or $y$, there exist $g^x, g^{y}\in \Gspace$ with $g^x x\in U$ and $g^y y\in U$, so that:
\[(g^y y) \in \overline{\mcV  (g^x x)} \text{ and } (g^x x) \in \overline{\mcV  (g^y y)}.\]
\end{definition}
It is clear that $x \leftrightsquigarrow y$ implies that $\overline{Gx} = \overline{Gy}$. It is also clear that this is a symmetric relation that is invariant under the action of $\Gspace$; that is, if $g,h\in\Gspace$, we have that $x \leftrightsquigarrow y$ if and only if $gx \leftrightsquigarrow h y$. As a consequence we can write $[x] \leftrightsquigarrow [y]$ whenever $x \leftrightsquigarrow y$, without ambiguity.

\begin{definition}
Let $\Xspace$ be a Polish $\Gspace$-space and let $C\subseteq \Xspace$ be a $\Gspace$-invariant set. The {\bf unbalanced graph} associated to the action of $G$ on $C$  is the graph $(C/\Gspace,\leftrightsquigarrow)$, where $C/\Gspace:=\{[x]\mid x\in C\}$, and $[x]\leftrightsquigarrow[y]$ if and only if $x\leftrightsquigarrow y$.
\end{definition}

We say that $(C/\Gspace,\leftrightsquigarrow)$ is {\bf connected}, if for every $x,y\in C$ there is a {\bf path in $C/\Gspace$ from $[x]$ to $[y]$}; that is, a sequence $x_0,\ldots,x_{n-1}\in C$ so that $x=x_0, x_{n-1}=y$ and $x_{i-1}\leftrightsquigarrow x_i$, for all $0<i<n$. 
We say that $(\Xspace/\Gspace,\leftrightsquigarrow)$  is {\bf generically semi-connected}, if for every $\Gspace$-invariant comeager $C\subseteq \Xspace$, there is a comeager $D\subseteq C$ 
so that for every $x,y\in D$ there is a path between $[x]$ and $[y]$ in $(C/\Gspace,\leftrightsquigarrow)$. We say that a Polish $G$-space $X$ is {\bf generically unbalanced}, if it has meager orbits and $(\Xspace/\Gspace,\leftrightsquigarrow)$ is generically semi-connected.

Let $(\Xspace,E), (\Yspace,F)$ be two classification problems. A {\bf Baire-measurable homomorphism} from $E$ to $F$ is a a Baire-measurable map $f: \Xspace \rightarrow \Yspace$ so that $xEx'\implies f(x) F f(x')$. It is a {\bf Baire-measurable reduction}, if we additionally have $ f(x) F f(x')\implies xEx'$.  We say that $(\Xspace,E)$ is {\bf generically $F$-ergodic}, if for every Baire-measurable homomorphism from $E$ to $F$ there is a comeager subset $C\subseteq \Xspace$ so that $f(x) F (x')$ for all $x,x'\in C$. The following theorem and its corollary are the main results of this paper.

\begin{theorem}\label{Th:main}
 Let $\Xspace$ be a Polish $\Gspace$-space and let $\Yspace$ be  a Polish $\Hspace$-space, where $\Hspace$ is TSI.
If $(\Xspace/\Gspace,\leftrightsquigarrow)$ is   generically semi-connected, then $E^\Gspace_\Xspace$ is generically $E^\Hspace_\Yspace$-ergodic. 
 \end{theorem}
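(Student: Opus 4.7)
The plan is to follow the general template for dynamical obstructions: take an arbitrary Baire-measurable homomorphism $f \colon (X, E^G_X) \to (Y, E^H_Y)$, reduce to the case where $f$ is continuous on a comeager $G$-invariant set $X_0 \subseteq X$, and show that each $\leftrightsquigarrow$-edge within $X_0$ is mapped by $f$ into a single $H$-orbit of $Y$. Generic semi-connectedness of $(X/G,\leftrightsquigarrow)$ will then propagate this edge-by-edge along a finite path, forcing $f$ to land in a single $H$-orbit on a comeager set.

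The first step is the standard reduction: every Baire-measurable $G$-equivariant map agrees, on a comeager $G$-invariant subset $X_0$, with a continuous $G$-equivariant map, so we may assume $f$ is continuous on $X_0$. The main technical step is then the key lemma: \emph{if $x \leftrightsquigarrow y$ in $X_0$, then $f(x) E^H_Y f(y)$.} To prove this, I would fix a compatible two-side invariant metric $d_H$ on $H$ and set $V_\epsilon = \{h \in H : d_H(h,e_H) < \epsilon\}$; by two-side invariance, each $V_\epsilon$ is conjugation-invariant. Unpacking $x \leftrightsquigarrow y$ with $V_n \subseteq G$ shrinking to $e_G$ and with $U_n \subseteq X$ a neighborhood basis of a common accumulation point, we obtain $g^x_n, g^y_n \in G$ such that $g^x_n x, g^y_n y$ converge to that common point in $X$ while $g^y_n y \in \overline{V_n \cdot g^x_n x}$ and $g^x_n x \in \overline{V_n \cdot g^y_n y}$. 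Pushing forward through the continuous homomorphism $f$ produces sequences $h^x_n, h^y_n \in H$ with $f(g^x_n x) = h^x_n f(x)$ and $f(g^y_n y) = h^y_n f(y)$ converging together in $Y$, together with a cancellation structure inherited from the $V_n$. The decisive use of TSI is to exploit conjugation-invariance of $V_\epsilon$ and bi-invariance of $d_H$ to show that the elements $(h^y_n)^{-1} h^x_n$ can be chosen so that they form a Cauchy sequence in $(H, d_H)$; the limit $h \in H$ then satisfies $h f(x) = f(y)$. Once the key lemma is in hand, generic semi-connectedness provides a comeager $D \subseteq X_0$ such that any two $G$-orbits in $D$ are joined by a finite $\leftrightsquigarrow$-path in $X_0$, and iterating the lemma along such a path forces $f(D)$ into a single $H$-orbit.

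The hard part will be the extraction step in the key lemma: mere convergence of the orbit representatives $h^x_n f(x)$ and $h^y_n f(y)$ toward a common point in $Y$ does not, on its own, imply that $f(x)$ and $f(y)$ lie in the same $H$-orbit. The role of TSI is precisely to remedy this: bi-invariance of $d_H$ neutralizes the conjugations introduced by the $g^x_n$- and $g^y_n$-dependence and converts approximate closeness in $Y$ into a genuine Cauchy condition in $d_H$, which is complete. Making this cancellation precise, while handling the non-uniqueness of the $h^x_n$ coming from stabilizers and arranging that every choice made sits inside a fixed comeager set, is the technical core of the proof.
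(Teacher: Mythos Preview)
Your plan has a genuine gap at the ``key lemma'': you assert that a single edge $x\leftrightsquigarrow y$ forces $f(x)$ and $f(y)$ into the same $H$-orbit, and you propose to witness this by extracting a Cauchy sequence $(h^y_n)^{-1}h^x_n$ in the bi-invariant metric on $H$. But nothing in the data produces such a Cauchy sequence. The relation $x\leftrightsquigarrow y$ is a $\forall V\,\forall U\,\exists g^x,g^y$ statement; the witnesses for different $(V,U)$ bear no coherence to one another, and there is no ``common accumulation point'' built into the definition. After pushing forward you only know that, for each $n$, some $h^x_n,h^y_n\in H$ satisfy $h^x_n f(x)=f(g^x_n x)$ and $h^y_n f(y)=f(g^y_n y)$, with these two points close in $Y$. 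Bi-invariance lets you rewrite $d_H\big((h^y_n)^{-1}h^x_n,(h^y_m)^{-1}h^x_m\big)=d_H\big(h^y_m(h^y_n)^{-1},\,h^x_m(h^x_n)^{-1}\big)$, but closeness of $h^x_n f(x)$ and $h^x_m f(x)$ in $Y$ says nothing about $h^x_m(h^x_n)^{-1}$ in $H$ unless the action is free and proper, which it need not be. In short, TSI neutralises conjugation, not the lack of coherence between witnesses; the leap from ``orbit representatives converge together in $Y$'' to ``Cauchy in $H$'' is exactly the step that fails.

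The paper's proof takes a different route that explains why a one-shot argument is not enough. It first shows only that $\leftrightsquigarrow$ pushes forward: if $x\leftrightsquigarrow y$ on a suitable comeager set then $f(x)\leftrightsquigarrow f(y)$ in $Y$. For TSI $H$ this yields merely the rank-one relation $f(x)\precsim^1_H f(y)$, which (via a Vaught-transform argument) controls only $\mathbf{\Pi}^0_1$ invariant sets---far short of pinning down an $H$-orbit, since orbits in TSI actions can have arbitrarily high Borel rank. The crucial extra ingredient is a transfinite bootstrapping: one refines the topology on $Y$ so that the relevant cones $\{y:y\precsim^\beta_V c\}^{\Delta W}$ become open, re-applies the push-forward of $\leftrightsquigarrow$ and generic semi-connectedness \emph{in the new topology}, and thereby upgrades $\precsim^1$ to $\precsim^\alpha$ for every countable $\alpha$. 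A separate Becker--Kechris boundedness argument then produces a single $\lambda<\omega_1$ such that $[f(x)]$ is $\mathbf{\Pi}^0_\lambda$ on a comeager set, and $f(x)\precsim^\lambda_H f(y)$ together with the $\mathbf{\Pi}^0_\lambda$-invariance lemma forces $f(x)$ and $f(y)$ into the same orbit. Note that generic semi-connectedness is invoked not once but at every stage of this induction; your outline uses it only at the end, which is why it cannot reach orbits of high complexity.
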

 
 \begin{corollary}[Obstruction to classification by TSI]\label{Cor:Main}
If the Polish  $G$-space $X$ is  generically unbalanced, then the orbit equivalence relation $E^\Gspace_\Xspace$ is not classifiable by TSI-group actions. 
 \end{corollary}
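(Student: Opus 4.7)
The plan is to deduce the corollary immediately from Theorem~\ref{Th:main} by a contradiction argument; essentially all the dynamical content has already been absorbed into that theorem, and what remains is bookkeeping plus one use of the ``meager orbits'' clause in the definition of generically unbalanced. Suppose, for the sake of contradiction, that $E^G_X$ is classifiable by TSI-group actions, meaning that there exist a Polish TSI group $H$, a Polish $H$-space $Y$, and a Borel reduction $f\colon X\to Y$ from $E^G_X$ to $E^H_Y$.

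Since $X$ is generically unbalanced, the graph $(X/G,\leftrightsquigarrow)$ is, by definition, generically semi-connected. Every Borel map is Baire-measurable, so in particular $f$ is a Baire-measurable homomorphism from $E^G_X$ to $E^H_Y$. Theorem~\ref{Th:main} therefore applies and yields a comeager set $C\subseteq X$ on which $f$ is $E^H_Y$-constant; that is, $f(x)\, E^H_Y\, f(x')$ for all $x,x'\in C$.

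Here is the step where I use that $f$ is a reduction, not merely a homomorphism: the backward implication in the reduction gives $x\, E^G_X\, x'$ for all $x,x'\in C$, so the comeager set $C$ is contained in a single $G$-orbit. But generic unbalancedness also stipulates that $X$ has meager orbits, so $C$ would be a comeager subset of a meager set, a contradiction.

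I do not anticipate any genuine obstacle; the only point worth flagging is that one really needs both halves of the definition of ``generically unbalanced.'' Semi-connectedness alone feeds Theorem~\ref{Th:main} and produces the comeager $E^H_Y$-constant set $C$, while the meager-orbits clause is what converts ``$C$ lies in one $G$-orbit'' into the desired contradiction. If one dropped the meagerness condition, $X$ could have a comeager orbit and be trivially classifiable, so this hypothesis cannot be avoided at the corollary level.
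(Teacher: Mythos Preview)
Your proof is correct and matches exactly the intended derivation: the paper states the corollary immediately after Theorem~\ref{Th:main} without a separate proof, and your argument---apply generic $E^{H}_{Y}$-ergodicity to the given Borel reduction, then use the meager-orbits clause to rule out a comeager orbit---is precisely the standard way to pass from generic ergodicity to non-reducibility.
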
{}

We now turn  to applications. In \cite{ClemensCoskey}, a new family of jump operators $E\mapsto E^{[\Gamma]}$ was introduced which are similar to the Friedman–Stanley jump $E\mapsto E^{+}$: for every countable group $\Gamma$,  the {\bf $\Gamma$-jump} of the classification problem $(\Xspace,E)$ is the classification problem   
\[(\Xspace^{\Gamma},E^{[\Gamma]}), \quad \text{with}\quad x E^{[{\Gamma}]} x' \iff (\exists \gamma \in \Gamma) \; (\forall \alpha \in \Gamma) \; x(\gamma^{-1}\alpha) \; E  \; x'(\alpha).\]

In the same paper they showed that the $\mathbb{Z}$-jump $E^{[\mathbb{Z}]}_0$ of $E_0$ is generically ergodic with respect to the countable product $E^{\omega}_0$ of $E_0$. 
In \cite{Allison2020}, the stronger result was shown that, in fact, $E^{[\mathbb{Z}]}_0$ is  generically $E^\Hspace_\Yspace$-ergodic, whenever $H$ is both a non-Archimedean and TSI Polish group. As a consequence of Theorem \ref{Th:main}, we now have that $E^{[\mathbb{Z}]}_0$ is generically $E^\Hspace_\Yspace$-ergodic for every TSI Polish group $\Hspace$ and therefore $E^{[\mathbb{Z}]}_0$ is not classifiable by any TSI-group action. In fact, in Section \ref{S:Wreath} we define a {\bf$P$-jump operator} $E\mapsto E^{[P]}$, for every Polish permutation group $P$, which
is a common generalization of the Friedman–Stanley jump and the jump operators defined in \cite{ClemensCoskey}. It turns out that $P$-jumps are particularly natural in the context of the generalized Bernoulli shifts from \cite{KMPZ2019}: if $E$ is the orbit equivalence relation of the generalized Bernoulli shift of the Polish permutation group $Q$, then $E^{[P]}$ is  the orbit equivalence relation of the generalized Bernoulli shift of $(P \wreath Q)$. The anti-classification result for $E^{[\mathbb{Z}]}_0$  is a particular instance of the next corollary. Here,  $(P\wreath\Gspace)$ is just the Wreath product of $P$ and $\Gspace$; see Section \ref{S:Wreath}.

\begin{theorem}\label{Th:main2}
Let $\Xspace$ be a Polish $\Gspace$-space which has a dense orbit, and let $P$ be Polish group of permutations of a countable set $N$. If all $P$-orbits of $N$ are infinite, then the unbalanced graph of the Polish $(P\wreath\Gspace)$-space $\Xspace^{N}$ is generically semi-connected.
\end{theorem}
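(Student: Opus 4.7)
The plan is to fix a $(P\wreath\Gspace)$-invariant comeager set $C\subseteq \Xspace^{N}$ and produce a comeager $D\subseteq C$ such that every pair $x,y\in D$ is joined by a finite path $[x]=[z^{(0)}]\leftrightsquigarrow\cdots\leftrightsquigarrow[z^{(k)}]=[y]$ inside $C$. First I take $D$ to be the intersection of $C$ with the comeager set of those $x\in \Xspace^{N}$ such that each coordinate $x_{n}$ lies in a dense $\Gspace$-orbit in $\Xspace$; comeagerness follows from the dense-orbit hypothesis on $\Xspace$ together with a countable basis of $\Xspace$ (the usual argument that the set of points with dense orbit is comeager whenever one such point exists), and the product formation keeps this comeager in $\Xspace^N$.

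The heart of the argument is a finite-support edge lemma: if $z,z'\in C$ agree on $N\setminus A$ for some finite $A\subseteq N$, then $[z]\leftrightsquigarrow[z']$. Given a basic neighborhood of identity $V=V_{P}\times\prod_{n}W_{n}$ of $P\wreath \Gspace$, with $W_{n}$ a neighborhood of $1_{\Gspace}$ for $n$ in a finite set $F'$ and $W_{n}=\Gspace$ otherwise, and a basic open $U=\prod_{n}U_{n}$ with $U_{n}$ proper only on a finite set $F$, the infinite-$P$-orbit hypothesis, together with the finiteness of $A$ and $F\cup F'$, produces $q\in P$ with $q(A)\cap(F\cup F')=\emptyset$. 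Setting $g^{z}=g^{z'}=(q,h)$, with $h\in \Gspace^{N}$ chosen coordinatewise via the density of $\Gspace\cdot z_{q^{-1}(n)}$ in $\Xspace$ so that both $(q,h)\cdot z$ and $(q,h)\cdot z'$ land in $U$, the agreement of $z$ and $z'$ on $q^{-1}(F\cup F')$ forces the coincidence of the two translates on $F\cup F'$. Since $1_{\Gspace}\in W_{n}$ and the $\Gspace$-orbits are dense in the coordinates outside $F'$, this coincidence yields both containments $(q,h)\cdot z'\in\overline{V\cdot(q,h)\cdot z}$ and $(q,h)\cdot z\in\overline{V\cdot(q,h)\cdot z'}$.

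To finish, I must join two arbitrary points of $D$ by a finite sequence of such finite-support swaps. The chain is built by iteratively modifying $x$ toward $y$: at each stage one uses a $P\wreath \Gspace$-translate to absorb the then-current disagreement into a finite block of coordinates, and a subsequent finite-support move in the sense of the lemma removes that block while moving the representative orbit closer to $[y]$. The main obstacle is showing that this iterative scheme terminates in finitely many steps while keeping all intermediates inside the invariant comeager $C$; making this precise requires a Baire-category argument exploiting both the dense-$\Gspace$-orbit hypothesis on each coordinate and the infinite-$P$-orbit hypothesis on $N$, and it closely parallels the analysis of the $\mathbb{Z}$-jump $E^{[\mathbb{Z}]}_{0}$ in \cite{Allison2020}, with the role of $E_{0}$ replaced by the general Polish $\Gspace$-space $\Xspace$ with a dense orbit.
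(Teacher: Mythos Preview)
Your finite-support edge lemma is correct and, in spirit, matches the local computation carried out in the paper's proof. The genuine gap is in the last paragraph. Two generic points $x,y\in D$ will differ on \emph{every} coordinate, and no finite composition of finite-support swaps can change infinitely many coordinates. Your proposed ``iterative scheme'' therefore cannot terminate in finitely many steps, and you acknowledge as much by calling this the ``main obstacle'' without resolving it; the reference to the $E_0^{[\mathbb{Z}]}$ analysis does not supply the missing idea, since in that setting too one must produce a \emph{finite} $\leftrightsquigarrow$-path through $C$, not an infinite approximation.

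The paper circumvents this with a different device. One fixes, via Neumann's lemma, a bijection $e\colon N\to N\times\{0,1\}$ generic enough that for every finite $A\subseteq N$ and each $i\in\{0,1\}$ there is $p\in P$ with $e(p\cdot n)=(p\cdot n,i)$ for all $n\in A$. This $e$ induces a homeomorphism $\varphi\colon \Xspace^N\times\Xspace^N\to\Xspace^N$. The key claim is that $\varphi(\boldsymbol{x}_0,\boldsymbol{x}_1)\leftrightsquigarrow\boldsymbol{x}_0$ and $\varphi(\boldsymbol{x}_0,\boldsymbol{x}_1)\leftrightsquigarrow\boldsymbol{x}_1$ for all $\boldsymbol{x}_0,\boldsymbol{x}_1\in C$; note that these points typically disagree on infinitely many coordinates, so this is strictly stronger than your edge lemma, though the verification uses the same mechanism (shift by a suitable $p\in P$ so the prescribed finite window sees only the $\boldsymbol{x}_i$-coordinates). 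Kuratowski--Ulam applied to $\varphi^{-1}(C)\cap(C\times C)$ then yields a single anchor $\boldsymbol{z}\in C$ and a comeager $D$ so that, for every $\boldsymbol{x},\boldsymbol{y}\in D$, the length-four path
\[
\boldsymbol{x}\;\leftrightsquigarrow\;\varphi(\boldsymbol{z},\boldsymbol{x})\;\leftrightsquigarrow\;\boldsymbol{z}\;\leftrightsquigarrow\;\varphi(\boldsymbol{z},\boldsymbol{y})\;\leftrightsquigarrow\;\boldsymbol{y}
\]
lies entirely in $C$. This is the idea your argument is missing: rather than trying to morph $x$ into $y$ by finitely many finite edits, one routes both through a common anchor via a shuffle map and lets Kuratowski--Ulam guarantee the intermediates stay in $C$.
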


\begin{corollary}\label{Cor:1}
If $P,Q\leq S_{\infty}$ are both non-compact Polish permutation groups then the Bernoulli shift of $(P \wreath Q)$ has a generically unbalanced closed subshift.
\end{corollary}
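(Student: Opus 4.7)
The plan is to reduce Corollary \ref{Cor:1} to a direct application of Theorem \ref{Th:main2} by producing a closed subshift of the Bernoulli shift of $P \wreath Q$ that satisfies both conditions in the definition of generic unbalancedness: meager orbits and a generically semi-connected unbalanced graph.

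First, I would reduce to the case where $P$ and $Q$ each act with all orbits infinite. Since $P \le S_\infty$ is non-compact, at least one of its orbits on $\omega$ is infinite---otherwise $P$ would embed as a closed subgroup of the compact group $\prod_n S_n$. Setting $N$ to be the union of the infinite $P$-orbits on $\omega$ yields a continuous action of $P$ on a nonempty countable set $N$ with every orbit infinite, and the analogous reduction gives a countable set $M$ on which $Q$ acts with every orbit infinite. The wreath product $P \wreath Q$ then acts naturally as a permutation group on $M \times N$, and its generalized Bernoulli shift is the Polish $(P \wreath Q)$-space $2^{M \times N}$.

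Next, I would set $X := 2^M$ with the Bernoulli shift action of $Q$ and verify that $X$ has a dense $Q$-orbit via topological transitivity. Given basic cylinders $[s], [t] \subseteq 2^M$ determined by finite partial maps $s \colon F \to 2$ and $t \colon G \to 2$, the infinite-orbit hypothesis on $Q$ acting on $M$ produces $q \in Q$ with $qF \cap G = \emptyset$, whence any point agreeing with $s$ on $F$ and with $q^{-1} \cdot t$ on $q^{-1}G$ witnesses $q[s] \cap [t] \neq \emptyset$. Baire category then delivers a comeager set of dense-orbit points in $X$, so the hypothesis of Theorem \ref{Th:main2} is met. Applying that theorem with this $X$, $G = Q$, and $P, N$ as above gives that the unbalanced graph of the Polish $(P \wreath Q)$-space $X^N = 2^{M \times N}$ is generically semi-connected.

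To conclude generic unbalancedness I would separately verify that the $(P \wreath Q)$-orbits on $2^{M \times N}$ are meager. The same ``move finite sets apart'' argument, applied now to $P \wreath Q$ acting on $M \times N$ (all of whose orbits are infinite because the $P$-orbits on $N$ are), shows that the ambient Bernoulli shift is topologically transitive, so by the topological $0$-$1$ law every orbit---being analytic and hence having the Baire property---is either meager or comeager. An uncountable family of $(P \wreath Q)$-invariants (for example, asymptotic frequencies of $1$'s along distinct infinite $(P \wreath Q)$-orbits in $M \times N$) then precludes a single comeager orbit, forcing all orbits to be meager. Combined with the preceding step, this makes $2^{M \times N}$ a generically unbalanced closed subshift of the Bernoulli shift of $P \wreath Q$. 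The main anticipated obstacle is precisely this meager-orbit verification, which is not supplied by Theorem \ref{Th:main2} and must accommodate the fact that $P \wreath Q$ is an uncountable Polish group---so naive cardinality bounds on $|(P \wreath Q) \cdot x|$ do not suffice, and one must instead exhibit a dynamics-specific family of continuum many distinct orbit invariants.
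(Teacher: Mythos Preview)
Your reduction to Theorem \ref{Th:main2} via restricting to the infinite-orbit parts $N\subseteq\omega$ and $M\subseteq\omega$ is exactly what the paper does, and your verification that $X=2^M$ has a dense $Q$-orbit is fine. The genuine problem is the meager-orbit step, and the specific subshift you chose, $2^{M\times N}$, simply does not work in general.

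Take $P=Q=S_\infty$ (so $M=N=\omega$). Then two points $\boldsymbol{x},\boldsymbol{y}\in (2^\omega)^\omega$ lie in the same $(P\wreath Q)$-orbit iff there is $p\in S_\infty$ with $[\boldsymbol{x}(p(n))]_Q=[\boldsymbol{y}(n)]_Q$ for all $n$. The $Q=S_\infty$-orbit of $z\in 2^\omega$ is determined by the pair $(|z^{-1}(0)|,|z^{-1}(1)|)$, and the set of $z$ with both preimages infinite is a dense $G_\delta$. Hence the set of $\boldsymbol{x}\in 2^{\omega\times\omega}$ with every column in this generic $Q$-class is a dense $G_\delta$, and any two such $\boldsymbol{x},\boldsymbol{y}$ are in the same $(P\wreath Q)$-orbit. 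Thus $(S_\infty\wreath S_\infty)\curvearrowright 2^{\omega\times\omega}$ has a \emph{comeager} orbit, so this subshift is not generically unbalanced. Your proposed invariant---asymptotic frequencies along $(P\wreath Q)$-orbits in $M\times N$---also collapses here: $S_\infty\wreath S_\infty$ acts transitively on $\omega\times\omega$, so there is only one such orbit to take frequencies along. More generally, ``continuum many invariants'' does not preclude a comeager orbit; in the example above there are indeed $2^{\aleph_0}$ many $(P\wreath Q)$-orbits, yet one is comeager.

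The paper sidesteps this entirely by working with the alphabet $\mathbb{R}$ rather than $2$: its closed subshift is (after restricting to infinite orbits) all of $\mathbb{R}^{M^\infty\times N^\infty}$, and meagerness of orbits is immediate because $\mathrm{range}(x)\subseteq\mathbb{R}$ is countable and orbit-invariant, so the orbit of $x$ sits inside the meager set $\{y: y(m_0,n_0)\in\mathrm{range}(x)\}$ for any fixed coordinate. Replacing $2$ by $\mathbb{R}$ in your argument (and keeping everything else) repairs the gap with a one-line proof of meager orbits.
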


Corollary \ref{Cor:1} provides many examples of orbit equivalence relations of CLI groups which are not classifiable by TSI-group actions. However, all such examples are classifiable by countable structures. The next corollary shows that the complexity class within the CLI region but outside of the TSI and the non-Archimedean region in Figure \ref{fig1} is  non-empty:

\begin{corollary}\label{Cor:2}
There is a Polish $\Gspace$-space $\Xspace$ of a CLI Polish group $\Gspace$ which is  turbulent and generically unbalanced.
\end{corollary}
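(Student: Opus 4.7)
The plan is to take the $\mathbb{Z}$-jump of a classical turbulent action by an abelian Polish group; abelian groups are TSI (hence CLI), the $\mathbb{Z}$-jump will deliver generic unbalancedness via Theorem \ref{Th:main2}, and a direct argument shows turbulence is preserved. Concretely, fix any turbulent Polish $\Gspace_{0}$-space $\Xspace_{0}$ with $\Gspace_{0}$ abelian and with a dense orbit---for instance the translation action of $c_{0}$ on $\mathbb{R}^{\mathbb{N}}$. Let $P = \mathbb{Z}$ act on $N = \mathbb{Z}$ by translation, so that the single $P$-orbit is infinite, and consider the Polish $(P \wreath \Gspace_{0})$-space $\Xspace_{0}^{\mathbb{Z}}$ introduced in Section \ref{S:Wreath}.

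The group $\Gspace := P \wreath \Gspace_{0} \cong \Gspace_{0}^{\mathbb{Z}} \rtimes \mathbb{Z}$ is CLI: the countable product $\Gspace_{0}^{\mathbb{Z}}$ is CLI because $\Gspace_{0}$ is, and $\Gspace$ is an extension of this closed normal subgroup by the discrete group $\mathbb{Z}$, while CLI is closed under such extensions. Generic unbalancedness of $\Xspace := \Xspace_{0}^{\mathbb{Z}}$ under $\Gspace$ then follows directly from Theorem \ref{Th:main2}, whose hypotheses are met by our choices; meagerness of $\Gspace$-orbits is routine, because each $\Gspace_{0}^{\mathbb{Z}}$-orbit lies inside the preimage under the $0$th projection of a (meager) $\Gspace_{0}$-orbit and the countable extension by $\mathbb{Z}$ preserves meagerness.

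The main obstacle is to verify that $\Gspace \curvearrowright \Xspace$ is turbulent. I would first check this for the product action $\Gspace_{0}^{\mathbb{Z}} \curvearrowright \Xspace_{0}^{\mathbb{Z}}$: density and meagerness of orbits follow from the corresponding properties of $\Gspace_{0} \curvearrowright \Xspace_{0}$, while the local-orbit condition is obtained by iterating one-coordinate-at-a-time moves. Given a basic neighborhood $\prod_{i} U_{i}$ of the identity with $U_{i} = \Gspace_{0}$ for all but finitely many $i$, and a basic open box $\prod_{i} V_{i}$, the local orbit of $x$ is the product of local $\Gspace_{0}$-orbits in the finitely many ``small'' coordinates and of dense sets $V_{i} \cap [x_{i}]_{\Gspace_{0}}$ in the remaining ones, hence it is somewhere dense by turbulence of $\Gspace_{0} \curvearrowright \Xspace_{0}$. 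Since $\mathbb{Z}$ is discrete, sufficiently small identity-neighborhoods in $\Gspace$ sit inside $\Gspace_{0}^{\mathbb{Z}} \times \{0\}$, so $\Gspace$-local orbits coincide with $\Gspace_{0}^{\mathbb{Z}}$-local orbits, while $\Gspace$-orbits only enlarge by countable unions of $\Gspace_{0}^{\mathbb{Z}}$-orbits, preserving density and meagerness. Combining with Corollary \ref{Cor:Main} and Hjorth's turbulence theorem then yields that $E^{\Gspace}_{\Xspace}$ is classifiable by neither TSI nor non-Archimedean group actions, placing the example in the desired region of Figure \ref{fig1}.
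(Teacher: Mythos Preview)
Your proposal is correct and follows essentially the same route as the paper: take the $\mathbb{Z}$-jump of a turbulent action of an abelian Polish group on $\mathbb{R}^{\mathbb{N}}$, invoke Theorem~\ref{Th:main2} for generic unbalancedness, and observe that the wreath product of CLI groups is CLI. The only cosmetic difference is that the paper uses $\ell_2$ rather than $c_0$ as the base group, and it dispatches the verification of turbulence and meagerness of orbits for the jump in a single sentence (``It is easy to check\ldots''), whereas you spell out the local-orbit argument for the product action and the passage to the discrete extension; your added detail is sound.
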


We finally illustrate how our results apply to natural classification problems from topology and operator algebras. For any locally compact metrizable space $T$, consider the problem $(\mathrm{CTr}^*(T),\equiv_{\mathcal{M}})$, of classifying all {\bf separable continuous-trace $C^*$-algebras with spectrum $T$} up to {\bf Morita equivalence}; and the problem $(\mathrm{Bun}_{\mathbb{C}}(T),\simeq_{\mathrm{iso}})$, of classifying all {\bf Hermitian line bundles over $T$} up to {\bf isomorphism}. In Section \ref{S:Applications} we show that both problems are in general not classifiable by actions of TSI-groups, even when $T$ is a CW-complex. In contrast, recall that the \emph{base-preserving} versions $\equiv_{\mathcal{M}}^T$, and $\simeq_{\mathrm{iso}}^T$, of the above problems are always classifiable by TSI---in fact by abelian---group actions; \cite{BLP2019}.

\subsection{Structure of the paper} Theorem  \ref{Th:main}, which is the main result of this paper, is proved in Section \ref{S:TSI}.  The necessary background is developed independently in  Section \ref{S:homomo} and Section \ref{S:ergodicity}.  In Section \ref{S:Wreath} we assume Theorem \ref{Th:main} and provide ``in vitro" applications, such as Theorem \ref{Th:main2}, Corollary \ref{Cor:1}, and Corollary \ref{Cor:2}. In Section \ref{S:Applications} we discuss applications in topology and operator algebras. Finally, in Section \ref{S:final} we informally announce some extensions of this work beyond the TSI dividing line which is currently work in progress.

\section{Wreath products, $P$-jumps, and Bernoulli shifts}\label{S:Wreath}

In this section, for every Polish permutation group $P$ of a countable set we introduce a $P$-jump operator for $\Gspace$-spaces and equivalence relations, in the spirit of \cite{ClemensCoskey}. We then draw some connections with the theory of generalized Bernoulli shifts from \cite{KMPZ2019}. We also prove Theorem \ref{Th:main2}, and derive  Corollary \ref{Cor:1} and Corollary \ref{Cor:2}.

Let $N$ be a countable set. We denote by  $S(N)$ the group of all permutations of $N$. This is a Polish group, when endowed with the pointwise convergence topology. By a {\bf Polish group of permutations of $N$} we mean any  closed subgroup $P$ of $S(N)$. Since we are considering left actions, the pertinent action $P\times N\to N$ is given by
\[(p,n)\mapsto  p(n), \text{ for any map } p\colon N\to N \text{ in } P.\]
Let $P$ be as above and let  $\Gspace$ be an arbitrary Polish group. The group 
\[\Gspace^{N}:=\{ \boldsymbol{g}\colon N\to \Gspace\}, \text{ where }  (\boldsymbol{g}_2\boldsymbol{g}_1)(n):=\boldsymbol{g}_2(n)\boldsymbol{g}_1(n),\]
is Polish and there is a natural $P$-action $\varphi\colon P\times \Gspace^{N} \to \Gspace^{N}$ on $\Gspace^{N}$ by automorphisms:
\[\varphi(p,\boldsymbol{g})=\boldsymbol{g}^p, \text{ where } \boldsymbol{g}^p(n):= \boldsymbol{g}(p^{-1} ( n)).\]
The  {\bf Wreath product} $(P\wreath_{\mkern-6mu N} \, \Gspace)$ of $P$ and $\Gspace$, or simply $(P\wreath \Gspace)$,  is the group
\[P\wreath \Gspace :=P  \rtimes_{\varphi} \Gspace^{N}.   \]
Concretely, elements of $P\wreath \Gspace$ are all pairs $(p,\boldsymbol{g})$, where $p\in P$, $\boldsymbol{g}\in\Gspace^N$, and
\[(p_2,\boldsymbol{g}_2)\cdot (p_1,\boldsymbol{g}_1) := (p_2 p_1,  \varphi(p_1,\boldsymbol{g}_2)  \boldsymbol{g}_1), \text{ where } \big(\varphi(p_1,\boldsymbol{g}_2)  \boldsymbol{g}_1\big)(n)=\boldsymbol{g}_2(p_1^{-1}( n))\boldsymbol{g}_1(n).\]
If $\Xspace$ is a Polish $\Gspace$-space, then the {\bf $P$-jump} of $\Gspace\curvearrowright \Xspace$ is the $(P\wreath\Gspace)$-space $\Xspace^{N}$:
\[(p, \boldsymbol{g})\cdot \boldsymbol{x} := \boldsymbol{x}^{\boldsymbol{g},p}, \text{ where } \boldsymbol{x}^{\boldsymbol{g},p}(n):= \boldsymbol{g}^p(n)\cdot\boldsymbol{x}^p(n)= \boldsymbol{g}(p^{-1}(n))\cdot\boldsymbol{x}(p^{-1}(n)) .\]
Similarly, let $(\Xspace^N,E^{[P]})$ be the {\bf $P$-jump} of a classification problem $(\Xspace,E)$, where: 
\[\boldsymbol{x}E^{[P]}\boldsymbol{x}' \iff (\exists p\in P) \; (\forall n\in N) \;\; \boldsymbol{x}(p^{-1}(n)) E \boldsymbol{x}'(n)\]
Notice that if $P=S(N)$, then this is simply the Friedman–Stanley jump $E\mapsto E^{+}$, and if $P$ is the left regular representation of a countable  group $\Gamma$ as a subgroup of $S(\Gamma)$,  then this is the $\Gamma$-jump $E\mapsto E^{[\Gamma]}$, introduced in \cite{ClemensCoskey}. Clearly, the orbit equivalence relation of the $P$-jump of a $\Gspace$-space $\Xspace$ is the $P$-jump of the orbit equivalence relation of the same space. We may now proceed to the proof of Theorem \ref{Th:main2}.

\begin{proof}[Proof of Theorem \ref{Th:main2}.]
Let $\Xspace$ be a Polish $\Gspace$-space which has a dense orbit and let $P\leq S(N)$ be a Polish permutation group on a countable set $N$ with infinite orbits.

Fix any comeager set $C\subseteq \Xspace^N$. For any fixed $n\in N$ consider ``column" space  $\Xspace^{\{n\}}$ of all maps from the singleton $\{n\}$ to $\Xspace$. This is naturally isomorphic to the Polish $\Gspace$-space $\Xspace$. By intersecting $C$ with the appropriate comeager set we may assume without loss of generality that for all $\boldsymbol{x}\in C$ we have that:
\begin{equation}\label{eq:1}
\text{for all } n\in N, 
\text{ the orbit of } 
\boldsymbol{x}(n) \text{ is dense in } \Xspace^{\{n\}}.
\end{equation}
Here we use that $\{x\in\Xspace \mid [x] \text{ is dense} \}$ is a $G_{\delta}$ subset of $\Xspace$, and since by assumption it contains a dense orbit, it is comeager.

Let $E$ be the collection of all bijective maps from $N$ to the space $N\times\{0,1\}$, of two disjoint copies of $N$. Then $E$ is a Polish space with the pointwise convergence topology. Since every $P$-orbit of $N$ is infinite, by the Neumann's lemma \cite[Lemma 2.3]{Neumann1976} we have that for every finite $A,B \subseteq N$, there is $p\in P$ so that $(p\cdot A) \cap B =\emptyset$. As a consequence, for the generic $e\in E$ and for every $i\in\{0,1\}$: 
\begin{equation}\label{eq:2}
    \text{if } A\subseteq N \text{ is finite, then there is } p\in P \text{ so that }  e(p^{-1}(n))=(p^{-1}(n),i), \text{ for all } n\in A.
\end{equation}
Fix some $e\in E$ satisfying the above and consider the  map $\varphi\colon \Xspace^N \times \Xspace^N \to \Xspace^N$,
where $\varphi(\boldsymbol{x}_0,\boldsymbol{x}_1)$ is the function $\boldsymbol{x}\colon N\to \Xspace$, with $\boldsymbol{x}(n)=\boldsymbol{x}_0(m)$, if $e(n)=(m,0)$; and
$\boldsymbol{x}(n)=\boldsymbol{x}_1(m)$, if $e(n)=(m,1)$.

\begin{claim}
The map $\varphi\colon \Xspace^N \times \Xspace^N \to \Xspace^N$ is a homeomorphism of topological spaces.
\end{claim}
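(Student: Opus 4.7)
The plan is to write down an explicit inverse and verify that both $\varphi$ and its inverse are continuous in the product topology; all of this will follow immediately from the bijectivity of $e$.

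First I would define $\psi\colon \Xspace^N \to \Xspace^N \times \Xspace^N$ by $\psi(\boldsymbol{x}) := (\boldsymbol{x}_0, \boldsymbol{x}_1)$, where for each $i\in\{0,1\}$ and $m\in N$ I set $\boldsymbol{x}_i(m) := \boldsymbol{x}(e^{-1}(m,i))$. This is well-defined precisely because $e$ is a bijection from $N$ onto $N\times\{0,1\}$. The identities $\varphi\circ\psi=\mathrm{id}_{\Xspace^N}$ and $\psi\circ\varphi=\mathrm{id}_{\Xspace^N\times\Xspace^N}$ are then checked coordinate by coordinate: for any $n\in N$, writing $e(n)=(m,i)$, one has $\varphi(\psi(\boldsymbol{x}))(n) = \boldsymbol{x}_i(m) = \boldsymbol{x}(e^{-1}(m,i)) = \boldsymbol{x}(n)$, and conversely $\psi(\varphi(\boldsymbol{x}_0,\boldsymbol{x}_1))_i(m) = \varphi(\boldsymbol{x}_0,\boldsymbol{x}_1)(e^{-1}(m,i)) = \boldsymbol{x}_i(m)$ directly from the defining cases of $\varphi$.

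For continuity, both $\Xspace^N$ and $\Xspace^N\times\Xspace^N$ carry the product topology, so it is enough to verify that every coordinate projection of $\varphi$ and of $\psi$ is continuous. For $\varphi$: fix $n\in N$ and write $e(n)=(m,i)$; then $(\boldsymbol{x}_0,\boldsymbol{x}_1)\mapsto \varphi(\boldsymbol{x}_0,\boldsymbol{x}_1)(n)$ is the composition of the projection onto the $i$-th factor with evaluation at the coordinate $m\in N$, both of which are continuous. For $\psi$: fix $i\in\{0,1\}$ and $m\in N$; then $\boldsymbol{x}\mapsto \psi(\boldsymbol{x})_i(m)$ is just evaluation of $\boldsymbol{x}$ at the coordinate $e^{-1}(m,i)\in N$, which is continuous.

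I do not anticipate any real obstacle here: the entire content of the claim is that reindexing the countable product $\Xspace^N$ along the fixed bijection $e\colon N\to N\sqcup N$ produces a canonical homeomorphism $\Xspace^N \cong \Xspace^N\times \Xspace^N$. The only minor care needed is to keep the labeling of $\boldsymbol{x}_0$ and $\boldsymbol{x}_1$ consistent with the two cases in the definition of $\varphi$, which is handled automatically by the definition of $\psi$ through $e^{-1}$.
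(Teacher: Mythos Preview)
Your argument is correct and is essentially the same as the paper's: both use that bijectivity of $e$ yields bijectivity of $\varphi$, with continuity of $\varphi$ and $\varphi^{-1}$ following since each coordinate is a coordinate projection. You have simply spelled out the explicit inverse and the coordinate-wise verifications that the paper dismisses as ``straightforward.''
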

\begin{proof}[Proof of Claim.]
Since $e$ is bijective, every $\boldsymbol{x}\in \Xspace^N$ pulls back to some $(\boldsymbol{x}_0,\boldsymbol{x}_1)$ via $\varphi$ and every $(\boldsymbol{x}_0,\boldsymbol{x}_1)\in \Xspace \times \Xspace$ pushes forward to some $\boldsymbol{x}\in\Xspace$, i.e., $\varphi$ is a bijection. Continuity of  $\varphi$ and $\varphi^{-1}$ is straightforward.
\end{proof}

Consider the comeager subset $C^{\varphi}:=\varphi^{-1}(C) \bigcap (C\times C)$ of $\Xspace^N\times \Xspace^N$. By Kuratowski-Ulam, there is some $\boldsymbol{z}\in C$ so that $D:=\{\boldsymbol{y}\in \Xspace \mid (\boldsymbol{z},\boldsymbol{y})\in C^{\varphi}\}$ is a comeager subet of $\Xspace$. We are now done with the proof, since by the next claim, for every $\boldsymbol{x},\boldsymbol{y}\in D$ we have the following
path between $\boldsymbol{x}$ and $\boldsymbol{y}$ in $C$: 
\[\boldsymbol{x} \; \leftrightsquigarrow \;  \varphi(\boldsymbol{x},\boldsymbol{z}) \; \leftrightsquigarrow \; \boldsymbol{z}  \; \leftrightsquigarrow \;  \varphi(\boldsymbol{z},\boldsymbol{y})  \; \leftrightsquigarrow \; \boldsymbol{y}.\]

\begin{claim}
For all $\boldsymbol{x}_0,\boldsymbol{x}_1\in C$ we have that $\varphi(\boldsymbol{x}_0,\boldsymbol{x}_1)\leftrightsquigarrow \boldsymbol{x}_0$ and $\varphi(\boldsymbol{x}_0,\boldsymbol{x}_1)\leftrightsquigarrow \boldsymbol{x}_1$.
\end{claim}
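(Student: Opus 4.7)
The plan, given any basic neighborhood $V$ of the identity in $P\wreath G$ and any basic open neighborhood $U$ of $\boldsymbol{x}_0$, is to exhibit a single element $\alpha=\beta=(p,\boldsymbol{g})\in P\wreath G$ that sends both $\boldsymbol{x}_0$ and $\varphi=\varphi(\boldsymbol{x}_0,\boldsymbol{x}_1)$ to points which agree \emph{exactly} on a large finite set of coordinates and which both lie in $U$. Once exact agreement is arranged on a sufficiently large finite set, the two $V$-closure conditions in the definition of $\leftrightsquigarrow$ reduce to an off-diagonal density argument.

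Concretely, I would write $V$ in the standard way so that its permutation part fixes a finite set $B\subseteq N$ pointwise and its $G^N$-part is constrained only at coordinates in $B$, and take $U$ to be determined by a finite $A\subseteq B$ and open $U_n\ni\boldsymbol{x}_0(n)$. First, I would invoke \eqref{eq:2} (with $i=0$ and the finite set $B$) to produce $p\in P$ for which $e(p(n))=(p(n),0)$ for every $n\in B$; by the definition of $\varphi$, this forces $\varphi(p(n))=\boldsymbol{x}_0(p(n))$ on $B$. Then, using the density property \eqref{eq:1} of $\boldsymbol{x}_0\in C$, I would pick $g_n\in G$ with $g_n\cdot\boldsymbol{x}_0(p(n))\in U_n$ for each $n\in A$, define $\boldsymbol{g}\colon N\to G$ by $\boldsymbol{g}(p(n))=g_n$ for $n\in A$ and $\boldsymbol{g}=1_G$ elsewhere, and set $\alpha=\beta=(p,\boldsymbol{g})$. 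A direct computation from the wreath action formula yields $(\alpha\boldsymbol{x}_0)(n)=(\beta\varphi)(n)$ for all $n\in B$, and both points lie in $U$.

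To check $\alpha\boldsymbol{x}_0\in\overline{V\cdot\beta\varphi}$, let $W$ be a basic neighborhood of $\alpha\boldsymbol{x}_0$ determined by a finite $C\supseteq B$ and open $W_n\ni(\alpha\boldsymbol{x}_0)(n)$. I would take $v=(\mathrm{id}_P,\boldsymbol{g}')$ with $\boldsymbol{g}'(n)=1_G$ for $n\in B$ (so that $v\in V$) and, for each $n\in C\setminus B$, choose $\boldsymbol{g}'(n)\in G$ such that $\boldsymbol{g}'(n)\cdot(\beta\varphi)(n)\in W_n$; this is possible because for $n\notin B$ one has $(\beta\varphi)(n)=\varphi(p(n))$, and $\varphi(p(n))$ equals either $\boldsymbol{x}_0(m)$ or $\boldsymbol{x}_1(m)$ for some $m$, both of which have dense $G$-orbits by \eqref{eq:1}. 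The reverse inclusion $\beta\varphi\in\overline{V\cdot\alpha\boldsymbol{x}_0}$ is symmetric, using density of $\boldsymbol{x}_0(p(n))$ for the off-$B$ adjustment. The assertion $\varphi\leftrightsquigarrow\boldsymbol{x}_1$ follows identically, replacing $i=0$ by $i=1$ in the appeal to \eqref{eq:2}.

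The main conceptual obstacle is recognizing that topological proximity of $\alpha\boldsymbol{x}_0$ and $\beta\varphi$ in $X^N$ does not, by itself, imply the $\leftrightsquigarrow$-closure conditions, since $V_n$-orbits in $X$ need not be neighborhoods of $\boldsymbol{x}_0(n)$; arranging \emph{exact} agreement on $B$---the role of \eqref{eq:2}---is what sidesteps this, while the density provided by \eqref{eq:1} easily handles the complement.
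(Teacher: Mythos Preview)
Your proposal is correct and follows essentially the same strategy as the paper: use \eqref{eq:2} to choose $p\in P$ so that the $p$-shifts of $\boldsymbol{x}_0$ and $\varphi(\boldsymbol{x}_0,\boldsymbol{x}_1)$ agree \emph{exactly} on the finite control set, use \eqref{eq:1} to pick the $G^N$-component $\boldsymbol{g}$ so that both shifted points land in $U$, and then verify the $V$-closure conditions via the subgroup $\{(1_P,\boldsymbol{h}):\boldsymbol{h}|_B\equiv 1_G\}\subseteq V$ together with the density of each coordinate's $G$-orbit. One cosmetic remark: you only treat the case where $U$ is a basic neighborhood of $\boldsymbol{x}_0$, but the definition of $\leftrightsquigarrow$ also requires handling $U$ a neighborhood of $\varphi(\boldsymbol{x}_0,\boldsymbol{x}_1)$; since your choice of $g_n$ uses only that the orbit of $\boldsymbol{x}_0(p(n))$ is dense (and not that $U_n\ni\boldsymbol{x}_0(n)$), the identical argument covers that case as well---indeed, the paper simply takes $U$ to be an arbitrary nonempty open set.
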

\begin{proof}[Proof of Claim.]
It suffices to prove that $\varphi(\boldsymbol{x}_0,\boldsymbol{x}_1)\leftrightsquigarrow \boldsymbol{x}_0$ since the other case is symmetric. Set $\boldsymbol{x}=\boldsymbol{x}_0$ and $\boldsymbol{y}=\varphi(\boldsymbol{x}_0,\boldsymbol{x}_1)$. Let $V\subseteq(P\wreath \Gspace)$ be an open neighborhood of the identity and let $U\subseteq\Xspace$ be any non-empty open set. By shrinking both sets if necessary, we assume that there is a finite set  $A\subseteq N$, a map  $u\colon A \to \Xspace$, an open neighborhood $W\subseteq\Gspace$ of the identity of $\Gspace$, and some $\varepsilon>0$, so that 
\[V=\{(p,\boldsymbol{g})\in (P\wreath \Gspace)\mid \boldsymbol{g}(n)\in W, \text{ for all } n\in A, \text{ and } p \text{ fixes every } a \in A\}, \text{ and }\]
\[U=\{\boldsymbol{x}\in \Xspace^N \mid d\big(\boldsymbol{x}(n),u(n)\big)<\varepsilon, \text{ for all } n\in A\},\]
where $d$ is any metric on $\Xspace$ that is compatible with the topology.

By \eqref{eq:2} there is $p\in P$ so that $\big((p,\boldsymbol{1_\Gspace})\cdot\boldsymbol{x}\big)(n)=\big((p,\boldsymbol{1_\Gspace})\cdot\boldsymbol{y}\big)(n)$, for all $n\in A$. By \eqref{eq:1} we may set $g^x=g^y:=(p,\boldsymbol{g})$ for some $\boldsymbol{g}\in\Gspace^N$  so that in addition to
\begin{equation}\label{eq:3}
\big((p,\boldsymbol{g})\cdot\boldsymbol{x}\big)(n)=\big((p,\boldsymbol{g})\cdot\boldsymbol{y}\big)(n), \text{ for all } n\in A,    
\end{equation}
we also have $g^x \cdot\boldsymbol{x}, g^y\cdot \boldsymbol{y}\in U$.  But $V$ contains the following subgroup of $(P\wreath{}\Gspace)$: 
\[\{(1_P,\boldsymbol{h})\mid \boldsymbol{h}(n)=1_{\Gspace}, \text{ for all } n\in A\}.\]
By \eqref{eq:1} and \eqref{eq:3}, it now follows that $(g^y \boldsymbol{y})\in \overline{V \cdot(g^x \cdot \boldsymbol{x})}$ and  $(g^x \boldsymbol{x})\in\overline{V \cdot(g^y \cdot \boldsymbol{y})}$.
\end{proof}
\end{proof}

Let $P$ be a Polish permutation group of a countable set $N$. The {\bf generalized Bernoulli shift} of $P$ is the Polish $P$-space $\mathbb{R}^N$ where $(p,x)\mapsto  x^p$ with $x^p(n)=x(p^{-1}(n))$, for every $n\in N$. This indeed generalizes the classical Bernoulli shift $\Gamma\curvearrowright\mathbb{R}^{\Gamma}$ for countable disrete groups, where $(g, x)\mapsto g\cdot x$ with $(g\cdot x)(\gamma)=x(g^{-1}\gamma)$. In \cite{KMPZ2019} it was shown that the Borel reduction complexity of the orbit equivalence relation of $P\curvearrowright \mathbb{R}^N$ is often a reflection of the dynamical properties of $P$. 

In addition to $P$ above, consider a  Polish permutation group $Q$ on some set $M$. Notice that $P\wreath_{\mkern-6mu N}Q$ may be realized as a Polish permutation group on the set $N\times M$ via the faithful action $P\wreath_{\mkern-6mu N}Q\curvearrowright N\times M$ with
\[(p,\boldsymbol{q})\cdot(n,m)\mapsto (p(n),(\boldsymbol{q}(n))(m)).\]
Moreover, its associated generalized Bernoulli shift  $P\wreath_{\mkern-6mu N}Q\curvearrowright \mathbb{R}^{N\times M}$ is naturally isomorphic to the $P$-jump $P\wreath_{\mkern-6mu N}Q\curvearrowright (\mathbb{R}^{M})^N$ of the generalized Bernoulli shift $Q\curvearrowright \mathbb{R}^M$ of $Q$. In particular, since $E_0$ is Borel isomorphic to the orbit equivalence of the (classical) Bernoulli shift of $\mathbb{Z}$, we see that the $\mathbb{Z}$-jump  $E^{[\mathbb{Z}]}_0$ of $E_0$ can be identified with the orbit equivalence relation of the generalized Bernoulli shift \[\mathbb{Z}\wreath_{\mkern-6mu  \mathbb{Z}}\mathbb{Z}\curvearrowright \mathbb{R}^{\mathbb{Z}\times\mathbb{Z}}.\]
In \cite{ClemensCoskey} it was shown that $E_0^{[\mathbb{Z}]}$ is generically ergodic with respect to $E_0^{\omega}$. This was later generalized in \cite{Allison2020}, where it was shown that $E_0^{[\mathbb{Z}]}$ is generically ergodic with respect to any orbit equivalence relation of any non-Archimedean TSI Polish group action. The following is a special case of Corollary \ref{Cor:1} (in the form of Lemma \ref{Cor:1LemmaVersion} below). Notice that since $E_0^{[\mathbb{Z}]}$ has meager equivalence classes this implies that it is not classifiable by TSI-group actions.

\begin{corollary}
$E_0^{[\mathbb{Z}]}$ is generically ergodic with respect actions of TSI-groups.
\end{corollary}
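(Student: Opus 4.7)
The plan is to deduce this corollary as a direct specialization of Theorem \ref{Th:main2} combined with Theorem \ref{Th:main}. The key observation (already recorded in the excerpt) is that $E_0$ is Borel isomorphic to the orbit equivalence relation of the Bernoulli shift $\mathbb{Z}\curvearrowright\mathbb{R}^{\mathbb{Z}}$, so $E_0^{[\mathbb{Z}]}$ is Borel isomorphic to the orbit equivalence relation of the $\mathbb{Z}$-jump of this $\mathbb{Z}$-space, namely the action of $\mathbb{Z}\wreath_{\mkern-6mu\mathbb{Z}}\mathbb{Z}$ on $(\mathbb{R}^{\mathbb{Z}})^{\mathbb{Z}}\isom \mathbb{R}^{\mathbb{Z}\times\mathbb{Z}}$. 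Generic $E^{\Hspace}_{\Yspace}$-ergodicity is invariant under Borel isomorphism, so it suffices to establish generic ergodicity for this latter action against every Polish TSI-group action.

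First, I would verify the hypotheses of Theorem \ref{Th:main2} with $\Xspace=\mathbb{R}^{\mathbb{Z}}$, $\Gspace=\mathbb{Z}$ acting by shift, and $P=\mathbb{Z}$ viewed as a Polish permutation group of $N=\mathbb{Z}$ via its left regular action. For the first hypothesis, note that $\{x\in\mathbb{R}^{\mathbb{Z}}\mid [x]\text{ is dense}\}$ is a dense $G_{\delta}$ set — a standard Baire-category argument, since for any basic open box $U$ determined by finitely many coordinates, the set $\{x:\exists k,\ k\cdot x\in U\}$ is open and dense. In particular there is a dense orbit. For the second hypothesis, $P=\mathbb{Z}$ acts transitively on $N=\mathbb{Z}$ and the unique orbit is infinite. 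Thus Theorem \ref{Th:main2} applies, and the unbalanced graph of $(\mathbb{Z}\wreath\mathbb{Z})\curvearrowright\mathbb{R}^{\mathbb{Z}\times\mathbb{Z}}$ is generically semi-connected.

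Second, I would invoke Theorem \ref{Th:main} directly. For any TSI Polish group $\Hspace$ and any Polish $\Hspace$-space $\Yspace$, generic semi-connectedness of the unbalanced graph implies that the orbit equivalence relation of $(\mathbb{Z}\wreath\mathbb{Z})\curvearrowright\mathbb{R}^{\mathbb{Z}\times\mathbb{Z}}$, i.e.\ $E_0^{[\mathbb{Z}]}$, is generically $E^{\Hspace}_{\Yspace}$-ergodic. This is exactly the statement of the corollary.

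I do not expect a serious obstacle here: essentially every nontrivial step has been packaged into Theorems \ref{Th:main} and \ref{Th:main2}, and what remains is the bookkeeping identification of $E_0^{[\mathbb{Z}]}$ with the relevant $P$-jump plus the elementary Baire-category fact that the shift $\mathbb{Z}\curvearrowright\mathbb{R}^{\mathbb{Z}}$ has a dense orbit. The only mild caveat worth flagging in the write-up is that, although the corollary is phrased as generic ergodicity, composing it with the observation that $E_0^{[\mathbb{Z}]}$ has meager classes yields (via Corollary \ref{Cor:Main}) the stronger non-classifiability conclusion quoted in the surrounding text.
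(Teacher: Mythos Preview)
Your proposal is correct and follows essentially the same approach as the paper: the paper simply records this corollary as the special case $P=Q=\mathbb{Z}$ of Corollary~\ref{Cor:1} (Lemma~\ref{Cor:1LemmaVersion}), whose proof is exactly the verification you carry out---identify $E_0^{[\mathbb{Z}]}$ with the orbit equivalence relation of the $\mathbb{Z}$-jump of the Bernoulli shift of $\mathbb{Z}$, check the hypotheses of Theorem~\ref{Th:main2}, and then apply Theorem~\ref{Th:main}. Your direct route just skips the packaging into the more general Corollary~\ref{Cor:1}.
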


We proceed  to the proof of Corollary \ref{Cor:1} after restating it in a more precise way:

\begin{lemma}\label{Cor:1LemmaVersion}
Let $P\leq S(N)$ and $Q\leq S(M)$ be  Polish permutation groups on countable sets $N,M$. If $P,Q$ are non-compact then the closed invariant subspace:
\[\{x\in \mathbb{R}^{M\times N}\mid x(m,n)\neq 0 \implies \text{ both orbits } [n]_P\subseteq N \text{ and } [m]_Q\subseteq M \text{ are infinite} \}\]
of the Bernoulli shift of $(P \wreath_{\mkern-6mu N} Q)$, has meager orbits and is generically ergodic with respect to actions of TSI Polish groups. 
\end{lemma}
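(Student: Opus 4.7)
The plan is to reduce to Theorem \ref{Th:main2} by restricting everything to the indices with infinite orbits. Let $N'\subseteq N$ be the union of the infinite $P$-orbits and $M'\subseteq M$ the union of the infinite $Q$-orbits. A closed subgroup of $S_\infty$ is compact iff all of its orbits are finite: open pointwise stabilizers and compactness make each orbit $P/\mathrm{Stab}(n)$ discrete and compact, hence finite; conversely, if all orbits are finite the group embeds as a closed subgroup of the compact product $\prod_{\mathcal{O}}\mathrm{Sym}(\mathcal{O})$. Non-compactness of $P$ and $Q$ thus forces $N'$ and $M'$ to be non-empty, and the restrictions $P':=P|_{N'}$, $Q':=Q|_{M'}$ are Polish permutation groups with only infinite orbits.

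The subspace $Z$ is exactly the set of $x\in\mathbb{R}^{N\times M}$ supported on $N'\times M'$, so $Z$ is canonically homeomorphic to $\mathbb{R}^{N'\times M'}$. The $(P\wreath Q)$-action on $Z$ factors through the canonical continuous surjection $P\wreath Q\twoheadrightarrow P'\wreath Q'$ (a quotient map by the open mapping theorem for Polish groups), so the orbits and the unbalanced relation $\leftrightsquigarrow$ on $Z$ are the same whether one uses $P\wreath Q$ or $P'\wreath Q'$. By the natural isomorphism recalled right above the lemma, the $(P'\wreath Q')$-action on $\mathbb{R}^{N'\times M'}$ is the $P'$-jump of the Bernoulli shift $Q'\curvearrowright\mathbb{R}^{M'}$.

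I then apply Theorem \ref{Th:main2} with $X=\mathbb{R}^{M'}$, $G=Q'$, and permutation group $P'$ on $N'$. The all-infinite-orbits hypothesis on $P'$ is built in. For the dense-orbit hypothesis on $X$, I verify topological transitivity of $Q'\curvearrowright\mathbb{R}^{M'}$: given non-empty basic open sets $U,V$ constraining finite coordinate sets $A,B\subseteq M'$, Neumann's lemma (valid since $Q'$ has only infinite orbits) produces $q\in Q'$ with $q(B)\cap A=\emptyset$, after which a test point $x$ can be prescribed freely on the disjoint sets $A$ and $q(B)$ so that $x\in U$ and $q\cdot x\in V$. A topologically transitive continuous action on a Polish space has a dense orbit, so Theorem \ref{Th:main2} yields that the unbalanced graph of $(\mathbb{R}^{M'})^{N'}\cong Z$ is generically semi-connected.

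For generic unbalancedness I also need meager orbits. Fix any $(n_0,m_0)\in N'\times M'$; for every $x\in Z$ and $(p,\boldsymbol{q})\in P\wreath Q$, the $(n_0,m_0)$-coordinate of $(p,\boldsymbol{q})\cdot x$ equals $x(p(n_0),\boldsymbol{q}(p(n_0))(m_0))$, which as $(p,\boldsymbol{q})$ varies takes values in the countable set $\{x(n,m):n\in[n_0]_P,\, m\in[m_0]_Q\}$. Hence the orbit of $x$ lies in the meager subset of $Z$ where this coordinate takes values in a countable set. Combining the previous paragraphs, $Z$ is generically unbalanced, so by Theorem \ref{Th:main} (the engine behind Corollary \ref{Cor:Main}) the orbit equivalence relation of $P\wreath Q$ on $Z$ is generically $E^H_Y$-ergodic for every Polish TSI-group $H$ and every Polish $H$-space $Y$. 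The only mildly subtle step is the dense-orbit check via Neumann's lemma; the rest is bookkeeping around the identification of $Z$ with the $P'$-jump.
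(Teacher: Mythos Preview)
Your proof is correct and follows the same overall strategy as the paper: restrict to the infinite-orbit indices, identify the subspace with the $P'$-jump of the Bernoulli shift of $Q'$, and invoke Theorem~\ref{Th:main2} together with Theorem~\ref{Th:main}. The one substantive difference is how you verify the dense-orbit hypothesis for $Q'\curvearrowright\mathbb{R}^{M'}$: the paper cites \cite{KMPZ2019} for the generic ergodicity of the Bernoulli shift of a non-compact closed permutation group (which yields a dense orbit), whereas you give a direct, self-contained argument via Neumann's lemma and topological transitivity. Your route is more elementary and avoids the external reference; the paper's is terser. You are also more careful than the paper in explaining why the quotient $P\wreath Q\twoheadrightarrow P'\wreath Q'$ preserves orbits and the relation $\leftrightsquigarrow$ (via the open mapping theorem), a point the paper leaves implicit when it says the subspace ``is isomorphic to'' the $(P^*\wreath Q^*)$-action.
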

\begin{proof}[Proof of Corollary \ref{Cor:1} in the form of Lemma \ref{Cor:1LemmaVersion}]
It is easy to see that the orbits are meager given that the $\mathrm{range}(x)$ is a countable subset of $\mathbb{R}$, for all $x\in \mathbb{R}^{N\times M}$. 

Let $M^{\infty}\subseteq M$ and $N^{\infty}\subseteq N$ be the collection of all points whose $Q$-orbit and $P$-orbit, respectively, is infinite. Let also $Q^*\leq S(M^{\infty})$  and $P^*\leq S(N^{\infty})$ be the image of $Q$ and $P$ respectively in their new representation. It is clear that the closed invariant subspace in the statement of the corollary is isomorphic to action: 
\[(P^*\wreath_{\mkern-6mu \mathbb{N}}Q^*)\curvearrowright \mathbb{R}^{N^{\infty}\times M^{\infty}},\]   
and as in the previous paragraph this is just the $P^{*}$-jump of the Benroulli shift of $Q^*$. But the Bernoulli shift of $Q^*$ is generically ergodic, since $Q^*$ is non-compact (see \cite{KMPZ2019}), and all orbits of $P^*$ are infinite. The rest follows from Theorem \ref{Th:main2}  
\end{proof}

Corollary \ref{Cor:1} gives many examples of classification problems which are classifiable by countable structures but not by actions of TSI groups. We may similarly find orbit equivalence relations which are classifiable neither by countable structures nor by TSI group actions. In fact we may do so while acting with a CLI group.

\begin{proof}[Proof of Corollary \ref{Cor:2}]
Let $(l_2,+)$ be the group of all sequences $(a_n)_n$ of reals which are square-summable. 
The action of $l_2$ on $\mathbb{R}^{\mathbb{N}}$ with $(a_n)_n\cdot(x_n)_n:=(a_n+x_n)_n$ is turbulent, see \cite{Gao2008}. In particular, it has meager orbits all of which happen to be dense. Let $\Gspace\curvearrowright \Xspace$ be the $\mathbb{Z}$-jump of $l_2\curvearrowright \mathbb{R}^{\mathbb{N}}$. It is easy to check that this space is turbulent and still has meager orbits. The rest follows from Theorem \ref{Th:main2} and the fact that the wreath product of CLI groups is CLI.
\end{proof}

\section{Definable classification induces homomorphism between unbalanced graphs}\label{S:homomo}

The following Theorem is the main result of this section.

\begin{theorem}\label{push_forward_arrows}
Suppose $\Xspace$ is a Polish $\Gspace$-space and $\Yspace$ is a Polish $\Hspace$ for Polish groups $\Gspace$ and $\Hspace$. For any Baire-measurable homomorphism $f : E^\Gspace_\Xspace \rightarrow E^\Hspace_\Yspace$, there is a $\Gspace$-invariant comeager set $C \subseteq \Xspace$ such that for any $x_0, y_0 \in C$, if $x_0 \leftrightsquigarrow y_0$ then $f(x_0) \leftrightsquigarrow f(y_0)$.
\end{theorem}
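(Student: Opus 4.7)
The strategy is to combine the Baire-measurability of $f$ with an Effros-type openness of orbits on the $H$-side, pushing $\leftrightsquigarrow$ forward via continuity. First, by Baire-measurability, I would fix a dense $G_\delta$ set $X^* \subseteq X$ on which $f \upharpoonright X^*$ is continuous, and apply Kuratowski--Ulam to the continuous and open action map $G \times X \to X$ together with the preimage of $X^*$ to obtain the $G$-invariant comeager set
\[ C_1 := \{x \in X : \{g \in G : gx \in X^*\} \text{ is comeager in } G\}. \]
Letting $Y^* \subseteq Y$ denote the $H$-invariant comeager set of points where Effros's theorem applies---so that the orbit map $H/H_y \to [y]_H$ is a homeomorphism onto the orbit in its subspace topology---I would set $C := C_1 \cap f^{-1}(Y^*)$. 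Here $G$-invariance of $C$ follows from the $H$-invariance of $Y^*$ and the homomorphism property of $f$, while comeagerness of $f^{-1}(Y^*)$ would be verified using the continuity of $f \upharpoonright X^*$ together with a complementary category argument.

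Given $x_0, y_0 \in C$ with $x_0 \leftrightsquigarrow y_0$, and arbitrary open $V' \subseteq H$ neighborhood of $1_H$ and $U' \subseteq Y$ neighborhood of $f(x_0)$ (the alternative case is symmetric), I would exploit the $G$-invariance of $\leftrightsquigarrow$ to replace $x_0, y_0$ by generic translates in $X^*$. Effros at $f(x_0)$ then yields an open $W \subseteq Y$ with $W \cap [f(x_0)]_H = V' f(x_0)$, which I may shrink to lie in $U'$; continuity of $f \upharpoonright X^*$ at $x_0$ provides an open neighborhood $U \ni x_0$ in $X$ with $f(U \cap X^*) \subseteq W$, and I select a small open neighborhood $V \ni 1_G$ in $G$ designed also to control the subsequent Effros-neighborhoods at $f(g^x x_0)$ via translation. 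Applying $\leftrightsquigarrow$ with these $U, V$ gives $g^x, g^y \in G$ satisfying $g^x x_0, g^y y_0 \in U$ and the closure conditions in $V$; a further genericity step arranges $g^x x_0, g^y y_0 \in X^*$. Defining $h^x, h^y \in H$ with $h^x f(x_0) = f(g^x x_0)$ and $h^y f(y_0) = f(g^y y_0)$, the landing conditions $h^x f(x_0), h^y f(y_0) \in U'$ are immediate.

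For the overlap condition $h^y f(y_0) \in \overline{V' h^x f(x_0)} = \overline{V' f(g^x x_0)}$, I would pick $v_n \in V$ with $v_n g^x x_0 \to g^y y_0$ (from $g^y y_0 \in \overline{V g^x x_0}$, with $v_n g^x x_0 \in X^*$ for all $n$ by genericity), so that continuity gives $f(v_n g^x x_0) \to f(g^y y_0)$ in $Y$, with each $f(v_n g^x x_0) \in [f(x_0)]_H$. Effros at $f(g^x x_0) \in Y^*$ yields an open $W' \subseteq Y$ with $W' \cap [f(x_0)]_H = V' f(g^x x_0)$; by the smallness of $U$ and continuity, $f(g^y y_0)$ lies in $W'$, so $f(v_n g^x x_0) \in W' \cap [f(x_0)]_H = V' f(g^x x_0)$ for all large $n$, yielding $f(g^y y_0) \in \overline{V' f(g^x x_0)}$. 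The symmetric condition $h^x f(x_0) \in \overline{V' h^y f(y_0)}$ is handled analogously using $g^x x_0 \in \overline{V g^y y_0}$ and Effros at $f(g^y y_0)$.

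The main obstacle will be the uniformity of the Effros step: the neighborhood $W'$ at $f(g^x x_0)$ depends on $g^x$, which is determined only after applying $\leftrightsquigarrow$, whereas $V$ must be chosen in advance. I would navigate this via the $H$-equivariance of the orbit structure, relating Effros at $f(g^x x_0)$ to Effros at $f(x_0)$ by translation through $h^x$, combined with the smallness of $U$ (which keeps $h^x$ close to the stabilizer $H_{f(x_0)}$, so the translation does not materially alter the neighborhood). In the TSI setting that drives the main application (Theorem \ref{Th:main}), conjugation-invariance of $V'$ would make this uniformity automatic; for a general Polish $H$ one must be more careful, and this is the main technical content of the argument.
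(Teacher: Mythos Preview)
Your strategy has a genuine gap at the Effros step. You assume there is an $H$-invariant comeager set $Y^*\subseteq Y$ on which the Effros dichotomy yields the openness of the orbit map, and that $f^{-1}(Y^*)$ is comeager in $X$. Neither claim is true in general. For the irrational rotation $\mathbb{Z}\curvearrowright\mathbb{T}$ every orbit is countable dense, hence not $G_\delta$, so $Y^*=\emptyset$; taking $G=H=\mathbb{Z}$, $X=Y=\mathbb{T}$ and $f=\mathrm{id}$ shows that $f^{-1}(Y^*)$ can be empty as well. This example is abelian, hence TSI, so the failure occurs precisely in the regime you care about. The ``complementary category argument'' you allude to cannot rescue this: if $Y^*=\emptyset$ there is nothing to pull back. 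Consequently, the crucial step---concluding $f(v_n g^x x_0)\in V' f(g^x x_0)$ from $f(v_n g^x x_0)\in W'\cap [f(x_0)]_H$---has no justification.

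The paper avoids Effros entirely and replaces it by Hjorth's \emph{orbit-continuity lemma} (Lemma~\ref{orbit_continuity}): on an invariant comeager $C\subseteq X$, $f\!\upharpoonright\! C$ is continuous, generic $G$-translates of any $x\in C$ stay in $C$, and for every $x_0\in C$ and open $W\ni 1_H$ one can choose open $U\ni x_0$ and $V\ni 1_G$ so that for each $x\in U\cap C$ and a comeager set of $g\in V$ one has $f(gx)\in W f(x)$. This is exactly the ``category-theoretic Effros'' you need, and it holds for arbitrary Polish $H$ with no hypothesis on orbits. The paper then combines this with a short lemma showing the witnesses $g^x,g^y$ in the definition of $\leftrightsquigarrow$ can be taken from nonmeager sets, so that one may intersect with the comeager conditions coming from orbit continuity. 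Once you feed condition~(3) of Lemma~\ref{orbit_continuity} into your third paragraph in place of the Effros neighborhoods $W,W'$, the rest of your outline goes through essentially as written, and the uniformity issue you flagged in your final paragraph disappears as well: the neighborhoods $U,V$ are chosen directly from $W$ and $x_0$ before $g^x,g^y$ are produced.
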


For the proof of this Theorem we will rely on two lemmas. The
following ``orbit-continuity" lemma is essentially \cite[Lemma 3.17]{Hjorth2010} modified as in the beginning of the proof of \cite[Theorem 3.18]{Hjorth2010}. For a direct proof see \cite{LupiniPanagio2018}, noting that the the set $C$ produced in that proof happens to be invariant.

\begin{lemma}\label{orbit_continuity}
Suppose $\Xspace$ is a Polish $\Gspace$-space and $\Yspace$ is a Polish $\Hspace$-space for Polish groups $\Gspace$ and $\Hspace$. For any Baire-measurable homomorphism $f : E^\Gspace_\Xspace \rightarrow E^\Hspace_\Yspace$, there is an invariant comeager set $C \subseteq \Xspace$ such that:
\begin{enumerate}
\item $f$ restricted to $C$ is continuous; and
\item for any $x_0 \in C$ and any open neighborhood $\mcW$ of the identity of $\Hspace$, there is an open neighborhood $\mcU$ of $x_0$ and an open neighborhood $\mcV$ of the identity of $\Gspace$ such that for any $x \in \mcU \cap C$ and for a comeager set of $g \in \mcV$, we have $f(gx) \in \mcW f(x)$.
\end{enumerate}
\end{lemma}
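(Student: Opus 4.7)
The plan is to construct the comeager set $C$ by successively intersecting three comeager subsets of $\Xspace$, one for each clause, via the classical triad of Baire-measurability, Kuratowski--Ulam, and the dynamical regularity of the Polish $\Hspace$-action.

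I would first handle clauses (1) and (2) in parallel. Baire-measurability of $f$ yields a comeager $C_0 \subseteq \Xspace$ on which $f$ is continuous, which provides (1). The set $A = \{(g, x) \in \Gspace \times \Xspace : gx \in C_0\}$ is comeager in the product because the shear homeomorphism $(g, x) \mapsto (g, gx)$ carries $\Gspace \times C_0$ onto $A$. Applying Kuratowski--Ulam to $A$ yields a comeager $C_1 \subseteq \Xspace$ such that $\{g : gx \in C_0\}$ is comeager in $\Gspace$ for every $x \in C_1$. A short diagonal argument using left-translation invariance of comeagerness in $\Gspace$ then shows that $C := C_0 \cap C_1$ satisfies both (1) and (2) (and can be further enlarged to its $\Gspace$-saturation if strict invariance is desired).

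For (3), I would fix $x_0 \in C$ and an open neighborhood $\mcW$ of $1_\Hspace$, and pick a symmetric open $\mcW' \subseteq \Hspace$ with $(\mcW')^2 \subseteq \mcW$. The driving idea is that, since $gx$ and $x$ share a $\Gspace$-orbit, $f(x)$ and $f(gx)$ share an $\Hspace$-orbit; if both land in a single $\mcW'$-slice of that orbit, the unique element $h \in \Hspace$ with $f(gx) = h f(x)$ automatically lies in $(\mcW')^2 \subseteq \mcW$. To arrange this landing: continuity of $f|_C$ at $x_0$ gives an open $\mcU \ni x_0$ with $f(\mcU \cap C) \subseteq \mcO$ for a prescribed open $\mcO \ni f(x_0)$; joint continuity of the $\Gspace$-action gives an open $\mcV \ni 1_\Gspace$ with $\mcV \cdot \mcU \subseteq \mcU$; and clause (2) supplies, for each $x \in \mcU \cap C$, a comeager set of $g \in \mcV$ with $gx \in C$, placing both $f(x)$ and $f(gx)$ in $\mcO$.

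The main obstacle is the step that converts ``close in $\Yspace$'' into ``close via $\mcW$ in the common orbit'' --- i.e., ensuring that $\mcO \cap \Hspace f(x) \subseteq \mcW' f(x)$ uniformly as $x$ ranges over $\mcU \cap C$. This is an Effros-type openness of the orbit map, which is not automatic for arbitrary Polish actions. I would handle this either by shrinking $C$ once more to the comeager invariant subset on which $\Hspace f(x)$ is non-meager in its closure (so that Effros's theorem applies pointwise), or by invoking the Becker--Kechris change-of-topology theorem to refine $\Yspace$ into a Polish topology with open $\Hspace$-orbit maps, in which the localization works directly while all earlier Baire-category statements persist on a comeager refinement of $C$. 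Everything else in the argument is routine bookkeeping with continuity and Kuratowski--Ulam; this last step is the genuine dynamical content.
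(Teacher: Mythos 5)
Your treatment of clauses (1) and (2) is essentially correct and matches the standard argument: take a comeager $G_\delta$ set $D$ on which $f$ is continuous, note that your $C_1$ is exactly the Vaught transform $D^{*G}=\{x : \forall^* g\ (gx \in D)\}$, which is comeager \emph{and} genuinely $G$-invariant (by invariance of category under right translation in $G$), and put $C = D \cap D^{*G}$; then (2) follows immediately. One caveat: your parenthetical suggestion to pass to the $G$-saturation of $C$ ``if strict invariance is desired'' does not work, since saturating destroys clause (1); note that under genuine invariance clause (2) would be vacuous, so the word ``invariant'' in the statement must be read loosely, as it is in the sources the paper cites. (The paper itself gives no proof of this lemma --- it quotes Hjorth's Lemma 3.17/Theorem 3.18 and Lupini--Panagiotopoulos --- so the comparison below is with that proof.)

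Clause (3), however, is where the lemma lives, and your argument for it has a genuine gap at exactly the point you flag as ``the genuine dynamical content.'' You reduce (3) to the claim that, for small enough open $O \ni f(x_0)$, one has $O \cap Hf(x) \subseteq W'f(x)$ for all relevant $x$. This claim is false in precisely the situations the lemma is designed for, and neither of your proposed repairs can succeed. Take $X = Y = 2^{\mathbb{N}}$, let $G = H = \bigoplus_n \mathbb{Z}/2\mathbb{Z}$ (countable, discrete) act by coordinate flips, so that $E^H_Y = E_0$, and let $f = \mathrm{id}$, a perfectly good Baire-measurable homomorphism. Every orbit is countable and dense, hence meager in its closure, so the set where ``Effros's theorem applies pointwise'' is \emph{empty}: your first repair produces the empty set, not a comeager one. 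As for the second repair, no Polish topology on this standard Borel space makes the action continuous with open orbit maps: for a countable discrete group, open orbit maps means every orbit is relatively discrete, and then the partial transversals $S_n = \{x : [x] \cap U_n = \{x\}\}$ (for $U_n$ a countable basis) are Borel by Lusin--Novikov and assemble into a Borel transversal for $E_0$, contradicting its non-smoothness. There is no Becker--Kechris theorem of the kind you invoke; their change-of-topology results make countably many prescribed Borel sets open, never orbit maps. Meanwhile clause (3) holds trivially in this example (take $V = \{1_G\}$), which shows your intermediate claim is strictly stronger than what is being proved, and falsely so.

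The actual proof (Hjorth; Lupini--Panagiotopoulos) never converts ``close in $Y$'' into ``close in the orbit''; it exploits the category quantifier over $g$ in the conclusion. Cover $H$ by countably many \emph{right} translates $W'h_n$ of a small symmetric $W'$ with $W'(W')^{-1} \subseteq W$. Since $f$ is a homomorphism, the comeager subset $\{(g,x) : x \in C,\ gx \in D\}$ of a box $V_0 \times U_0$ is covered by the sets $B_n = \{(g,x) : f(gx) \in W'h_n f(x)\}$, which have the Baire property; by Kuratowski--Ulam some $B_n$ is comeager in a smaller box $V' \times U'$; and then for a generic $x$ and generic $g_1, g_2 \in V'$ one gets $f(g_1 x) \in W'(W')^{-1} f(g_2 x) \subseteq W f(g_2 x)$, after which the recentering $g \mapsto g g_2^{-1}$ (with $x$ replaced by $g_2 x$) moves the box back to a neighborhood of $1_G$. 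It is this pigeonhole over a countable cover of $H$ --- not any openness of orbit maps --- that certifies small group displacements, and it is the idea missing from your argument.
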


The next lemma says that the witnesses $g^x$ and $g^y$ in the definition of $x \leftrightsquigarrow y$ can be taken to be locally generic.
\begin{lemma}
For any Polish $\Gspace$-space $\Xspace$ and any $x, y \in \Xspace$, if $x \leftrightsquigarrow y$, then for any open neighborhood $\mcV$ of the identity of $\Gspace$ and any nonempty open neighborhood $\mcU \subseteq \Xspace$ of $x$ or $y$, there is a nonempty open set of $g^x \in \Gspace$ and a nonempty open set of $g^y \in \Gspace$ such that $g^x x, g^y y \in \mcU$, $g^y y \in \overline{\mcV  (g^x x)}$, and $g^x x \in \overline{(\mcV  (g^y y)}$.
\end{lemma}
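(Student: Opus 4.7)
My plan is to upgrade the existential witnesses in the definition of $\leftrightsquigarrow$ to an open (hence nonmeager) set of witnesses by a perturbation argument: if a witness pair exists for a slightly smaller neighborhood of the identity, then nearby left-translates of that pair remain witnesses for the original neighborhood. Since these translates form an open set in $\Gspace$, nonmeagerness comes for free.

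The first step is to choose, by joint continuity of multiplication and inversion, a symmetric open neighborhood $\mcV_0$ of the identity of $\Gspace$ with $\mcV_0^3 \subseteq \mcV$. Applying the definition of $x \leftrightsquigarrow y$ to the pair $(\mcV_0, \mcU)$ produces a single base pair $g_0^x, g_0^y \in \Gspace$ with $g_0^x x, g_0^y y \in \mcU$, $g_0^y y \in \overline{\mcV_0(g_0^x x)}$, and $g_0^x x \in \overline{\mcV_0(g_0^y y)}$. Then one defines the candidate sets
\[
A^x = \{h g_0^x : h \in \mcV_0 \text{ and } h g_0^x x \in \mcU\}, \qquad A^y = \{h g_0^y : h \in \mcV_0 \text{ and } h g_0^y y \in \mcU\}.
\]
Each is the intersection of the open translate $\mcV_0 g_0^x$ (resp.\ $\mcV_0 g_0^y$) with the preimage, under the continuous orbit map $g \mapsto gx$ (resp.\ $g \mapsto gy$), of the open set $\mcU$; taking $h = 1$ shows $g_0^x \in A^x$ and $g_0^y \in A^y$. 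So both sets are nonempty open subsets of $\Gspace$, and in particular nonmeager.

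The crux is verifying that any $g^x = h^x g_0^x \in A^x$ and $g^y = h^y g_0^y \in A^y$ satisfy the required closure conditions for $\mcV$. Since $g_0^x x = (h^x)^{-1} g^x x$, one computes
\[
h^y \mcV_0 \, g_0^x x \;=\; \bigl(h^y \mcV_0 (h^x)^{-1}\bigr)\, g^x x \;\subseteq\; \mcV_0^3 \cdot g^x x \;\subseteq\; \mcV \cdot g^x x,
\]
using the symmetry of $\mcV_0$ and $\mcV_0^3 \subseteq \mcV$. Because left translation by $h^y$ is a self-homeomorphism of $\Xspace$, it commutes with closure, giving
\[
g^y y = h^y g_0^y y \in h^y\, \overline{\mcV_0 g_0^x x} = \overline{h^y \mcV_0 g_0^x x} \subseteq \overline{\mcV(g^x x)},
\]
and the symmetric inclusion $g^x x \in \overline{\mcV(g^y y)}$ follows by interchanging roles. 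The only point of real care is insisting on the cubic containment $\mcV_0^3 \subseteq \mcV$ rather than the quadratic one, so that the perturbations $h^x$ and $h^y$ on both sides can be absorbed simultaneously in a single estimate; everything else is a matter of bookkeeping with continuous actions and open sets.
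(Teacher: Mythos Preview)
Your proof is correct and follows essentially the same approach as the paper's: choose $\mcV_0$ with $\mcV_0^3 \subseteq \mcV$, extract a base pair of witnesses for $(\mcV_0,\mcU)$ from the definition of $\leftrightsquigarrow$, and observe that the nonempty open sets of left $\mcV_0$-perturbations landing in $\mcU$ give the desired nonmeager sets. Your write-up is in fact slightly more careful than the paper's, since you make the symmetry of $\mcV_0$ explicit (needed to absorb $(h^x)^{-1}$ into $\mcV_0$) and spell out the closure computation that the paper leaves as an ``observe that.''
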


\begin{proof}
Let $\mcU \subseteq \Xspace$ be an open set intersecting the orbits of $x$ or $y$, and let $\mcV$ an open neighborhood of the identity of $\Gspace$. Choose another open neighborhood $\mcV_0$ of the identity such that $\mcV_0^3 \subseteq \mcV$. By the definition we can find some $h^x, h^y \in \Gspace$ such that $h^x, h^y \in \mcU$ and $h^x x \in \overline{\mcV_0  (h^y y)}$ and $h^y  y \in \overline{\mcV_0  (h^x x)}$. 

However, observe that for any $g^x \in \mcV_0 h^x$ and $g^y \in \mcV_0 h^y$, we have $g^x x \in \overline{\mcV  (g^y y)}$ and $g^y y \in \overline{\mcV  (g^x x)}$. Also, the set of $g^x \in \mcV_0 h^x$ such that $g^x x \in \mcU$ and the set of $g^y \in \mcV_0 h^y$ such that $g^y y \in \mcU$ are both open and nonempty. Thus the sets of $g^x$ and $g^y$ satisfying the conditions contains nonempty open sets and thus are nonmeager.
\end{proof}

We turn now to the proof of Theorem \ref{push_forward_arrows}.

\begin{proof}[Proof of Theorem \ref{push_forward_arrows}]
Fix an arbitrary open neighborhood $\mcW$ of the identity of $\Hspace$, and an open set $\mcU \subseteq Y$ intersecting the orbit of $f(y_0)$ (the case that $\mcU$ intersects the orbit of $f(x_0)$ is similar).
By the invariance of $\leftrightsquigarrow$, it would suffice to prove the claim for the Baire-measurable homomorphism $x \mapsto h \cdot f(x)$ for any $h \in H$. Thus without loss of generality we may assume that $\mcU$ is a neighborhood of $f(y_0)$.
By the orbit-continuity lemma, we can find open neighborhoods $\mcU'$ of $y_0$ and $\mcW$ of the identity of $\Gspace$ such that for every $x \in \mcU' \cap C$, there is a comeager set of $v \in \mcV$ such that $f(v  x) \in \mcW  f(x)$. By shrinking $\mcU'$, we may assume that $f[\mcU'] \subseteq \mcU$.

Since $x_0 \leftrightsquigarrow y_0$, by the previous lemma, we may find some group elements $g^x, g^y \in \Gspace$ such that $g^x x_0, g^y y_0 \in \mcU' \cap C$, $g^x  x_0 \in \overline{\mcV  (g^x x)}$ and $g^y y_0 \in \overline{\mcV  (g^y  y)}$. 
Since $f$ is a homomorphism, we may fix group elements $h^x, h^y \in \Hspace$ such that $h^x  f(x_0) = f(g^x  x_0)$ and $h^y  f(y_0) = f(g^y  y_0)$, which are both elements of $\mcU$.

To see that $h^x f(x_0) \in \overline{\mcW  (h^y f(y_0))}$, fix an open neighborhood $\mcU_0$ of $h^x f(x_0)$. Notice $g^x  x_0 \in f^{-1}[\mcU_0] \cap C$ and the set $v \in \mcV$ such that $v  (g^y y_0) \in f^{-1}[\mcU_0]$ is nonempty open, thus we can choose one such that $f(v  (g^y y_0)) \in \mcW  f(g^y y_0) = \mcW  (h^y f(y_0))$. Checking that $h^y f(y_0) \in \overline{\mcW  (h^x f(x_0))}$ is the same.
\end{proof}

\section{Strong ergodicity properties and dynamical back and forth}\label{S:ergodicity}

Let $\Xspace$ be a Polish $\Gspace$-space. In this section we define 
 binary relations  $\precsim^{\alpha}_{\Gspace}$ and $\sim^\alpha_{\Gspace}$ on $\Xspace$,
for every $\alpha<\omega_1$. Intuitively, two points $x,y\in \Xspace$ satisfy $x\precsim^{\alpha}_{\Gspace}y$,  iff Player II has a non-losing strategy of rank $\alpha$ in a dynamical analogue of the classical Ehrenfeucht–Fra\"iss\'e game, where Player I is the ``spoiler" and starts by partially specifying $x$. In Proposition \ref{sim_to_invariance}, which is the main result of this section, we derive some strong ergodicity properties under the assumption 
 $x\precsim^{\alpha}_{\Gspace}y$.   Notice that the ideas developed in this section are similar in spirit to the content of \cite[Section 6.4]{Hjorth2010}.

\begin{definition}
Let $\Xspace$ be a Polish $\Gspace$-space, let $V$ be an open neighborhood of the identity of $\Gspace$ and let $x, y \in \Xspace$. 
By recursion on $\alpha<\omega_1$, we simultaneously define relations $x \precsim^{\alpha}_{V} y$ and $x \sim^\alpha_V y$ as follows:
\begin{enumerate}
    \item Let $x \precsim_{\mcV}^0 y$ holds exactly when $x \in \overline{\mcV  y}$;
    \item If $\precsim_{\mcV}^\alpha$ is defined, then  $x \sim^\alpha_\mcV y$  holds exactly when $x \precsim_\mcV^\alpha y$ and $y \precsim_\mcV^\alpha x$;
    \item Assume for some ordinal $\alpha$ that $\sim^\beta_{\mcW}$ is defined for every ordinal $\beta < \alpha$ and every open neighborhood $\mcW$ of the identity of $\Gspace$.
    Then,  $x \precsim^{\alpha}_\mcV y$ holds exactly when for every open neighborhood $\mcW$ of the identity of $\Gspace$ there exists some $v \in \mcV$, such that for every $\beta < \alpha$ we have that  $v y \sim^\beta_\mcW x$.  
\end{enumerate}
\end{definition}

Note that the relations $\precsim^\alpha_{\mcV}$ are not necessarily symmetric or transitive. 
The relations $\sim^\alpha_{\mcV}$ are symmetric by definition, but they are also not necessarily transitive. 
It's also worth noting that by an easy argument $\sim^1_\mcV \subseteq \sim^0_\mcV$, and then it follows that $\sim^\alpha_\mcV \subseteq \sim^\beta_\mcV$ and $\precsim^\alpha_\mcV \subseteq \precsim^\beta_\mcV$ for every $\beta \le \alpha$. Similarly, whenever $W \subseteq V$ are basic open neighborhoods of the identity of $G$, it is easy to see that $\sim^\alpha_W \subseteq \sim^\alpha_V$.

Before we state the main result of this section, recall the notation associated with the Vaught transforms. Let $A\subseteq\Xspace$ be a Baire-measurable set and let $V\subseteq \Gspace$ be any open set. If $x\in \Xspace$, we write $x\in A^{* V}$ if the set $\{v\in V \mid v x \in A\}$ is comeager in $V$. We write $x\in A^{\Delta V}$ if  the set $\{v\in V \mid v x \in A\}$ is non-meager in $V$. For basic properties of the Vaught transforms one may consult \cite{Gao2008}.

\begin{proposition}\label{sim_to_invariance}
Let $\Gspace$ an arbitrary Polish group and let $\Xspace$ be a Polish $\Gspace$-space. If $x \precsim^{\alpha}_\Gspace y$ and $\alpha \ge 1$, then for every $\BPi^0_{\alpha}$-set $A \subseteq \Xspace$ we have that
    \[x \in A^{\Delta \Gspace} \Rightarrow y \in A^{\Delta \Gspace}.\]
\end{proposition}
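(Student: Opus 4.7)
The plan is to prove the proposition by induction on $\alpha \geq 1$. I will first verify some basic facts: the relations $\precsim^\alpha_\Gspace$ are $\Gspace$-invariant on both sides (checked easily by conjugating the witnessing neighborhoods), and $\precsim^\beta_W \subseteq \precsim^\beta_\Gspace$ whenever $W \subseteq \Gspace$. These facts will let me shift $x$ and $y$ freely without breaking the hypothesis, and pass to the weakest $V$-parametrized form when applying the IH.

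For the base case $\alpha = 1$: Since $A$ is closed, $\{g \in \Gspace : gx \in A\}$ is closed and non-meager, hence has non-empty interior. Using $\Gspace$-invariance, I may replace $x$ by a suitable translate and assume $Vx \subseteq A$ for some open neighborhood $V$ of the identity. Pick a symmetric open neighborhood $W$ of the identity with $WW \subseteq V$. The hypothesis $x \precsim^1_\Gspace y$ yields some $v \in \Gspace$ with $vy \in \overline{Wx}$, and then $W(vy) \subseteq \overline{WWx} \subseteq \overline{Vx} \subseteq A$ (by closedness of $A$). Thus $vy \in A^{\Delta \Gspace}$, and hence $y \in A^{\Delta \Gspace}$.

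For the inductive step $\alpha \geq 2$: Write $A = \bigcap_n A_n$ with $A_n \in \BSigma^0_{\beta_n}$ for $\beta_n < \alpha$, so each $A_n^c \in \BPi^0_{\beta_n}$. The recursive definition of $\precsim^\alpha_\Gspace$ provides, for every open neighborhood $W$ of the identity, some $v \in \Gspace$ with $vy \precsim^\beta_W x$ (equivalently $vy \precsim^\beta_\Gspace x$) for all $\beta < \alpha$ simultaneously---importantly, the \emph{same} $v$ works for all $n$. The IH at level $\beta_n$ applied to the pair $(vy, x)$ and the set $A_n^c$ yields $vy \in (A_n^c)^{\Delta \Gspace} \Rightarrow x \in (A_n^c)^{\Delta \Gspace}$; taking the contrapositive, $x \in A_n^{*\Gspace}$ forces $vy \in A_n^{*\Gspace}$, and assembling over $n$ would give $vy \in A^{*\Gspace}$ and hence $y \in A^{\Delta \Gspace}$.

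The main obstacle---which is the crux of the proof---is that $x \in A^{\Delta \Gspace}$ merely provides the \emph{local} statement $x \in A^{*V}$ for some neighborhood $V$ of the identity (after shifting), not the \emph{global} comeager-ness $x \in A_n^{*\Gspace}$ demanded by the contrapositive of the IH. To bridge this gap I will strengthen the inductive claim to a $V$-parametrized form: \emph{for $\alpha \geq 1$, every neighborhood $V$ of the identity, every $\BPi^0_\alpha$ set $A$, and every pair $x,y$ with $x \precsim^\alpha_\Gspace y$ and $x \in A^{*V}$, there exist $v \in \Gspace$ and an open neighborhood $W$ of the identity with $vy \in A^{*W}$.} Decomposing $A_n = \bigcup_k C_{n,k}$ via its $\BSigma^0_{\beta_n}$ structure and invoking Baire category inside $V$, I can localize to some $C_{n,k}^{*V'}$; the role-switching in the definition of $\precsim^\alpha_V$ (which passes to $v_1 x \precsim^{\gamma}_{W_1} vy$ at the next stage) then lets me invoke the parametrized IH with $(v_1 x, vy)$ to push $C_{n,k}^{*V'}$-membership back onto $vy$. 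The bookkeeping---choosing shrinking neighborhoods $W, W_1, W', \dots$ diagonally so that the resulting witnesses cohere across all $n$---is the technical heart of the induction, while everything else is mechanical.
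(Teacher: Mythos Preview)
Your base case ($\alpha=1$) is correct, and you have correctly diagnosed the obstacle: the naive induction only feeds global $\Delta\Gspace$ information into the hypothesis, whereas the inductive step produces only local $*V$ information. The problem is that the strengthening you propose does not actually resolve this.

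Observe that your ``strengthened'' claim is equivalent to the proposition itself. The conclusion ``there exist $v\in\Gspace$ and an open $\mcW$ with $vy\in A^{*\mcW}$'' is exactly $y\in A^{\Delta\Gspace}$ (localize the non-meager set $\{g:gy\in A\}$ and shift); and the hypothesis ``$x\in A^{*V}$ for some identity neighborhood $V$'' is, after the $\Gspace$-invariance of $\precsim^\alpha_\Gspace$ you already noted, equivalent to $x\in A^{\Delta\Gspace}$. So you have restated the target, not strengthened it, and the same obstruction recurs one level down: when you apply your parametrized IH to each $C_{n,k}$, the existential witnesses $(v_n,W_n)$ depend on $n$, and there is no mechanism to make them cohere into a single $(v,W)$ with $vy\in A^{*W}$. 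Your ``diagonal bookkeeping'' sentence is where the proof would have to happen, and nothing in the inductive hypothesis as you have formulated it supplies the needed uniformity.

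The fix is to move the local neighborhood into the \emph{hypothesis} on $\precsim$, not merely into the conclusion. The paper proves the following claim by induction: for $\alpha\geq 1$, identity neighborhoods $\mcW,\mcV$ with $\mcW^2\subseteq\mcV$, and $A\in\BPi^0_\alpha$, if $x\precsim^\beta_\mcW y$ for every $\beta<\alpha$ and $y\in A^{*\mcV}$, then $x\in A^{*\mcW}$. Two features make this inductable. First, the $\precsim$ relation carries the small parameter $\mcW$, so unwinding it produces a witness $w\in\mcW$ rather than an arbitrary $v\in\Gspace$. Second, the output neighborhood is the \emph{same} $\mcW$, pinned to the input by $\mcW^2\subseteq\mcV$; there is no existential to reconcile across $n$. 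The inductive step then runs by contradiction: if $x\notin A^{*\mcW}$, one finds $w_0\in\mcW$, an identity neighborhood $\mcW_0$ with $\mcW_0 w_0\subseteq\mcW$, and some $n_0$ with $w_0 x\in(X\setminus B_{n_0})^{*\mcW_0}$; then, using $x\precsim^\beta_\mcW y$ and the shift lemma, one obtains $w\in\mcW$ with $w_0 w y\precsim^\gamma_{\mcW_1} w_0 x$ for all relevant $\gamma$ (where $\mcW_1^2\subseteq\mcW_0$), applies the IH to get $w_0 w y\in(X\setminus B_{n_0})^{*\mcW_1}$, and this contradicts $y\in A^{*\mcV}$ precisely because $\mcW_1 w_0 w\subseteq \mcW^2\subseteq\mcV$. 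The containment $\mcW^2\subseteq\mcV$ is what converts the local conclusion back into a contradiction with the local hypothesis---this is the step your formulation cannot supply.
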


We start by recording some useful basic properties of the relations $\precsim^{\alpha}_{V}$.

\begin{lemma}\label{sim_facts}
Let $\Xspace$ be  a Polish $\Gspace$-space and let $\mcV, \mcW$ be open neighborhoods of the identity of $\Gspace$. For every  $\alpha<\omega_1$ and every $x, y, z \in \Xspace$ we have that:
\begin{enumerate}
    \item\label{sim_shift} If $x \precsim^\alpha_\mcV y$ and $g \in \Gspace$, then $g  x \precsim^\alpha_{g \mcV g^{-1}} g  y$; and
    \item\label{sim_trans} If $x \precsim^\alpha_\mcV y$ and $y \precsim^\alpha_\mcW z$, then $x \precsim^\alpha_{\mcV \mcW} z$.
\end{enumerate}
\end{lemma}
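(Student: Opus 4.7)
The plan is to prove both claims simultaneously by transfinite induction on $\alpha$. I will treat (1) first in its own induction, and then invoke it inside the inductive step for (2).

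The base cases follow from continuity of the action. For (1) at $\alpha=0$, the assumption $x\in\overline{\mcV y}$ gives $gx\in g\overline{\mcV y}=\overline{g\mcV y}=\overline{(g\mcV g^{-1})(gy)}$, which is exactly $gx\precsim^0_{g\mcV g^{-1}} gy$. For (2) at $\alpha=0$, the assumption $y\in\overline{\mcW z}$ gives $vy\in v\overline{\mcW z}\subseteq\overline{\mcV\mcW z}$ for every $v\in\mcV$ by continuity of left multiplication, so $\overline{\mcV y}\subseteq\overline{\mcV\mcW z}$, and in particular $x\in\overline{\mcV\mcW z}$.

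For the inductive step of (1): given an open neighborhood $\mcU$ of the identity, I will apply the hypothesis $x\precsim^\alpha_{\mcV} y$ to the pre-conjugated neighborhood $g^{-1}\mcU g$ to obtain $v\in\mcV$ with $vy\precsim^\beta_{g^{-1}\mcU g} x$ for all $\beta<\alpha$. The inductive hypothesis at rank $\beta$ then yields $g(vy)\precsim^\beta_{\mcU} gx$, so $gvg^{-1}\in g\mcV g^{-1}$ is the required witness. For the inductive step of (2), I first record the routine monotonicity that $\mcU_0\subseteq\mcU_1$ implies $(\precsim^\beta_{\mcU_0})\subseteq(\precsim^\beta_{\mcU_1})$; this is a one-line induction on $\beta$. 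Then, given an open neighborhood $\mcU$ of the identity, I shrink it to an open $\mcU_0$ with $\mcU_0\mcU_0\subseteq\mcU$, apply $x\precsim^\alpha_{\mcV} y$ to $\mcU_0$ to get $v\in\mcV$ with $vy\precsim^\beta_{\mcU_0} x$ for all $\beta<\alpha$, and then apply $y\precsim^\alpha_{\mcW} z$ to the neighborhood $v^{-1}\mcU_0 v$ to get $w\in\mcW$ with $wz\precsim^\beta_{v^{-1}\mcU_0 v} y$ for all $\beta<\alpha$. Using (1) to shift the latter by $v$ turns it into $vwz\precsim^\beta_{\mcU_0} vy$, and the inductive hypothesis of (2) at rank $\beta$ then concatenates this with $vy\precsim^\beta_{\mcU_0} x$ to give $vwz\precsim^\beta_{\mcU_0\mcU_0} x$, which is $\precsim^\beta_{\mcU} x$ by monotonicity. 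With witness $vw\in\mcV\mcW$, this verifies $x\precsim^\alpha_{\mcV\mcW} z$.

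The only place where care is required is the conjugation trick in the step for (2): the second neighborhood must be pre-conjugated to $v^{-1}\mcU_0 v$ \emph{before} invoking $y\precsim^\alpha_{\mcW} z$, so that after the shift from (1) the resulting neighborhood sits inside $\mcU_0$ and can be concatenated with the approximation $vy\precsim^\beta_{\mcU_0} x$. Beyond this piece of bookkeeping, and the analogous pre-conjugation $g^{-1}\mcU g$ used in (1), the argument is a direct unwinding of the definitions.
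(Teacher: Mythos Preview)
Your proof is correct and follows essentially the same route as the paper's: both argue by transfinite induction, handle the base case by continuity of the action, and in the inductive step for (2) first pick $v\in\mcV$ from $x\precsim^\alpha_\mcV y$, then pre-conjugate the target neighborhood by $v$ before invoking $y\precsim^\alpha_\mcW z$, shift via (1), and concatenate by the inductive hypothesis. Your write-up is in fact slightly more careful than the paper's (you make the monotonicity step explicit, whereas the paper uses it silently when passing from $O_1^2$ to $O$).
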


\begin{proof}
For (\ref{sim_shift}),   if $x \precsim^0_\mcV  y$, then $x=\lim_n h_n  y$ for some $h_n\in V$. But then, by continuity of the action we have $g  x=\lim_n gh_n  y=\lim_n gh_ng^{-1} g  y$; that is, $g  x \precsim^0_{g \mcV g^{-1}} g  y$. 
Assume now that $x \precsim^\alpha_{\mcV} y$ and let $W\subseteq \Gspace$ be an open neighborhood of the identity of $\Gspace$. Since 
$x \precsim^\alpha_{\mcV} y$ and $g^{-1} \mcW g$ is an open neighborhood of the identity of $\Gspace$, there is some $v \in \mcV$ such that for every $\beta < \alpha$, $v y \sim^\beta_{g^{-1} \mcW g} x$. 
By the inductive assumption, for every $\beta < \alpha$ we have that  $g v  y \sim^\beta_{\mcW} g x$. Hence $(g v g^{-1}) g  y \sim^\beta_{\mcW} g  x$ for every $\beta < \alpha$, with $gvg^{-1} \in g\mcV g^{-1}$, as desired.

For (\ref{sim_trans}), suppose first that $x \precsim^0_\mcV y$ and $y \precsim^0_\mcW z$. 
For an arbitrary open neighborhood $\mcU \ni x$, we can find an open neighborhood $\mcU' \ni y$ and some $v \in \mcV$ such that $v  \mcU' \subseteq \mcU$. 
Then we can find some $w \in \mcW$ such that $w  z \in \mcU'$, in which case $vw  z \in \mcU$, where $vw \in \mcV \mcW$. 
Thus $x \in \overline{\mcV \mcW  z}$.

Now suppose $x \precsim^\alpha_\mcV y$ and $y \precsim^\alpha_\mcW z$ for some $\alpha \ge 1$. Fix an open neighborhood $O$ of the identity of $\Gspace$, with the goal of showing for some $g \in \mcV \mcW$ that for every $\beta < \alpha$, $g z \sim^\beta_{O} x$.
Let $O_1$ be an open neighborhood of the identity of $\Gspace$ so that $O_1^2\subseteq O$. Since $x \precsim^\alpha_\mcV y$, there is $v \in \mcV$ such that for every $\beta<\alpha$, we have $v y \sim^\beta_{O_1} x$. Now find some $w \in \mcW$ such that for every $\beta < \alpha$, $w z \sim_{v^{-1} O_1 v}^\beta y$. By Lemma \ref{sim_facts}.(\ref{sim_shift}), for every $\beta < \alpha$ we get $vw  z \sim_{O_1}^\beta vy$.
Thus for every $\beta < \alpha$, we have $vwz \sim^\beta_{O^2_1} x$ by the induction hypothesis, and therefore $vw  z \sim_{O}^\beta x$ as desired.
\end{proof}

We may now proceed to the proof of Proposition \ref{sim_to_invariance}.

\begin{proof}[Proof of Proposition \ref{sim_to_invariance}]
Notice that  if $x \in A^{\Delta \Gspace}$, one can find an open neighborhood $\mcV$ of the identity of $\Gspace$ and a group element $g \in \Gspace$ such that $g  x \in A^{* \mcV}$. By Lemma \ref{sim_facts}.(\ref{sim_shift}), we have $gx \precsim_{\Gspace}^\alpha gy$. Then there is some $h \in \Gspace$ such that for every $\beta < \alpha$, $hg y \precsim_{\mcW}^\beta gx$, where $\mcW$ is chosen to be some open neighborhood of the identity of $\Gspace$ such that $\mcW^2 \subseteq \mcV$. Thus it suffices to prove the following claim, which tells us that $hg y \in A^{* \mcW}$ and thus $y \in A^{\Delta \Gspace}$.

\begin{claim}
Let $\mcV,\mcW$ be open neighborhoods of the identity of $\Gspace$ so that $\mcW^2 \subseteq \mcV$. If for some $\alpha \ge 1$ we have $A\in\BPi^0_{\alpha}(\Xspace)$ and $x \precsim^\beta_\mcW y$ for every $\beta < \alpha$, then $y \in A^{* \mcV}$ implies $x \in A^{* \mcW}$.
\end{claim}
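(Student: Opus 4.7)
The plan is a straightforward induction on $\alpha \ge 1$.

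For the base case $\alpha = 1$ the set $A$ is closed, so the comeager subset $\{v \in \mcV : vy \in A\}$ of $\mcV$ witnessing $y \in A^{*\mcV}$ is also closed in $\mcV$ by continuity of $v \mapsto vy$; its open complement in $\mcV$ must therefore be meager and hence empty, forcing $\mcV y \subseteq A$. Combined with $x \in \overline{\mcW y}$ (from $x \precsim^0_\mcW y$) and closure of $A$, this yields $\mcW x \subseteq \overline{\mcW^2 y} \subseteq \overline{\mcV y} \subseteq A$, and hence $x \in A^{*\mcW}$.

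For the inductive step, decompose $\Xspace \setminus A = \bigcup_n B_n$ with $B_n \in \BPi^0_{\gamma_n}$ and $1 \le \gamma_n < \alpha$. Standard Baire-category manipulation of Vaught transforms gives the equivalences
\[y \in A^{*\mcV} \iff (\forall n)\ y \notin B_n^{\Delta \mcV} \qquad \text{and} \qquad x \in A^{*\mcW} \iff (\forall n)\ x \notin B_n^{\Delta \mcW},\]
so by contrapositive it suffices to show, for each $n$, setting $B := B_n$ and $\gamma := \gamma_n$, that $x \in B^{\Delta \mcW}$ implies $y \in B^{\Delta \mcV}$. Using the Baire property of the Borel set $\{w \in \mcW : wx \in B\}$ we find $u_0 \in \mcW$ and an open neighborhood $\mcW_0$ of the identity with $\mcW_0 u_0 \subseteq \mcW$ on which $\{w \in \mcW_0 u_0 : wx \in B\}$ is comeager; equivalently, $u_0 x \in B^{*\mcW_0}$. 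Lemma \ref{sim_facts}.(\ref{sim_shift}) then promotes $x \precsim^\gamma_\mcW y$ to $u_0 x \precsim^\gamma_{u_0 \mcW u_0^{-1}} u_0 y$, and unfolding the definition of $\precsim^\gamma$ against an open neighborhood $\mcW'$ of the identity chosen small enough that $\mcW'^2 \subseteq \mcW_0$ and $\mcW' u_0 \subseteq \mcW$, we extract some $h \in u_0 \mcW u_0^{-1}$ with $h u_0 y \precsim^\beta_{\mcW'} u_0 x$ for every $\beta < \gamma$.

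Now the inductive hypothesis at level $\gamma < \alpha$, applied with the swapped roles $\tilde x = h u_0 y$, $\tilde y = u_0 x$, $\tilde{\mcW} = \mcW'$, $\tilde{\mcV} = \mcW_0$, $\tilde A = B$, upgrades $u_0 x \in B^{*\mcW_0}$ into $h u_0 y \in B^{*\mcW'}$. Since $h \in u_0 \mcW u_0^{-1}$, we may write $h u_0 = u_0 w$ with $w \in \mcW$, and the constraint $\mcW' u_0 \subseteq \mcW$ then gives $g h u_0 = (g u_0) w \in \mcW \cdot \mcW \subseteq \mcV$ for every $g \in \mcW'$. Hence the comeager set $\{g \in \mcW' : g h u_0 y \in B\}$ transports via the homeomorphism $g \mapsto g h u_0$ into a non-meager subset of $\mcV$ consisting of group elements $v$ with $vy \in B$, which yields $y \in B^{\Delta \mcV}$, as required. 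The main technical obstacle is arranging the two constraints on $\mcW'$ in tandem with the $u_0$-conjugation: $\mcW'^2 \subseteq \mcW_0$ triggers the inductive hypothesis, while $\mcW' u_0 \subseteq \mcW$ keeps the final witnesses inside $\mcV$ so that the local $B^{*\mcW'}$ conclusion can be promoted to $y \in B^{\Delta \mcV}$.
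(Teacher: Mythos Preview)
Your proof is correct and follows essentially the same inductive scheme as the paper's: localize $x \in B^{\Delta \mcW}$ to obtain $u_0 x \in B^{*\mcW_0}$, shift via Lemma~\ref{sim_facts}.(\ref{sim_shift}) and unfold $\precsim^\gamma$ to land $h u_0 y \precsim^\beta_{\mcW'} u_0 x$, apply the inductive hypothesis, and push the resulting comeager witnesses back into $\mcV$ via $\mcW^2 \subseteq \mcV$. Your base case is slightly cleaner than the paper's contradiction argument (you observe directly that $A$ closed and $y \in A^{*\mcV}$ force $\mcV y \subseteq A$), and your extra constraint $\mcW' u_0 \subseteq \mcW$ is in fact automatic from $\mcW'^2 \subseteq \mcW_0$ and $\mcW_0 u_0 \subseteq \mcW$, but none of this changes the substance.
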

\begin{proof}[Proof of Claim.]
We proceed by induction on $\alpha \ge 1$. First, suppose $x \precsim^0_\mcW y$ and that $y \in A^{* \mcV}$ for some closed set $A \subseteq \Xspace$. Assuming for the sake of contradiction that $x \not\in A^{* \mcW}$, one can pick some $w_0 \in \mcW$ such that $w_0  x \in \mcU$, where  $\mcU:=A^c$. By Lemma \ref{sim_facts}.(\ref{sim_shift}), we get $w_0  x \precsim^0_{w_0 \mcW w_0^{-1}} w_0  y$, in which case one can pick some $w_1 \in \mcW$ such that $w_0w_1  y \in \mcU$. So there is an open neighborhood of $w_0w_1$ of elements $w$ such that $w y \in \mcU$. Since $\mcW^2 \subseteq \mcV$, this contradicts $y \in A^{* \mcV}$.

Suppose now that, for some ordinal $\alpha > 1$ the claim is true below $\alpha$, that $x \precsim^{\beta}_\mcW y$ for every $\beta < \alpha$, and that $y \in A^{*\mcV}$ for some $\BPi^0_{\alpha}$ set $A$. Write $A = \bigcap_{n \in \omega} B_n$ where each $B_n \subseteq \Xspace$ is $\Sigma^0_{\beta_n}$ for some $\beta_n < \alpha$. Assume for the sake of contradiction that $x \not\in A^{*\mcW}$. 
Then there is some $w_0 \in \mcW$ and an open neighborhood $\mcW_0$ of the identity of $\Gspace$, as well as some $n_0 \in \omega$, such that $\mcW_0w_0 \subseteq \mcW$ and $w_0  x \in (\Xspace \setminus B_{n_0})^{* \mcW_0}$.
Choose an open neighborhood $\mcW_1$ of the identity of $\Gspace$ such that $\mcW_1^2 \subseteq \mcW_0$.
Then we can find some $w \in \mcW$ such that for every $\beta < \alpha$, $wy \precsim_{w_0^{-1} \mcW_1 w_0}^\beta x$. 
By Lemma \ref{sim_facts}.(\ref{sim_shift}) we have $w_0w  y \precsim^{\beta}_{\mcW_1} w_0 x$.
Thus by the induction hypothesis applied to $w_0 x \in (\Xspace \setminus B_{n_0})^{* \mcW_0}$, we have $w_0w  y \in (\Xspace \setminus B_{n_0})^{*\mcW_1}$. But $\mcW_1 w_0w \cap \mcV \neq \emptyset$, contradicting that $y \in A^{* \mcV}$.
\end{proof}
\end{proof}

\section{Dynamics of TSI Polish groups}\label{S:TSI}

In this section, we derive some consequences for the relations $\leftrightsquigarrow$ and $\precsim^{\alpha}_{\Hspace}$, when these relations have been defined on a Polish $\Hspace$-space $\Yspace$, where $\Hspace$ is a TSI Polish group. We then conclude with the proof of Theorem \ref{Th:main}.

Throughout this section $\Hspace$ is a TSI Polish group and $\Yspace$ is a Polish $\Hspace$-space.  We  also fix a countable basis $\mcB$ of open, symmetric, and conjugation-invariant neighborhoods of the identity of $\Hspace$. We  assume that $\Gspace\in \mcB$ and that for any $\mcV \in \mcB$, we have $\mcV^2 \in \mcB$.

\begin{lemma}\label{L:TSI_facts}
Let $\Hspace$ be a Polish TSI group and $\Yspace$ a Polish $\Hspace$-space. We have:
\begin{enumerate}
    \item \label{ref_1} \label{arrow_to_sim} if $x \leftrightsquigarrow y$, then $x \sim^1_\Hspace y$;
    \item \label{ref_2} $\precsim^\alpha_{V}$ and $\sim^\alpha_{V}$ coincide for all $V\in\mathcal{B}$ and all ordinals $\alpha>0$; and
    \item \label{ref_3} $\sim^\alpha_{\Hspace}$ is an equivalence relation for all ordinals $\alpha>0$.
\end{enumerate}{}
\end{lemma}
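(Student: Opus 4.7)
I would prove the three parts in order, treating (1) as direct, (2) as the substantive inductive step, and (3) as a corollary.

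For (1), I rely on an unpacking of the definitions. Given an open neighborhood $W$ of the identity of $H$, I shrink it to $W_0 \in \mathcal{B}$ with $W_0 \subseteq W$, and apply the hypothesis $x \leftrightsquigarrow y$ with $V = W_0$ and $U = Y$ to produce $g^x, g^y \in H$ satisfying $g^y y \in \overline{W_0 \cdot g^x x}$. Since $W_0$ is conjugation-invariant, $\overline{W_0 \cdot g^x x} = g^x \cdot \overline{W_0 x}$, so $v := (g^x)^{-1} g^y \in H$ gives $vy \in \overline{W_0 x} \subseteq \overline{Wx}$, establishing $x \precsim^1_H y$.

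For (2), I proceed by transfinite induction on $\alpha \geq 1$, with $V \in \mathcal{B}$ fixed. For the base case $\alpha = 1$, I assume $x \precsim^1_V y$ and fix $W' \in \mathcal{B}$; I pick $W \in \mathcal{B}$ with $W \subseteq W'$ and extract $v \in V$ together with $w_n \in W$ such that $w_n x \to vy$. Applying $v^{-1}$ and rewriting $v^{-1}w_n = w_n' v^{-1}$ with $w_n' := v^{-1}w_n v \in W$ (by conjugation-invariance of $W$) gives $w_n' v^{-1} x \to y$. The crux will be to reverse this convergence into $u_k y \to v^{-1} x$ for some $u_k \in W'$: using joint continuity of the action at $y$, I plan to extract a subsequence $n_k$ so that the pointwise continuity modulus of $(w_{n_k}')^{-1}$ at $y$ dominates the rate $d_Y(w_{n_k}' v^{-1} x, y) \to 0$, yielding $(w_{n_k}')^{-1} y \to v^{-1} x$; then $u_k := (w_{n_k}')^{-1} \in W \subseteq W'$ witnesses $v^{-1} x \in \overline{W' y}$, so $v' := v^{-1} \in V$ gives $y \precsim^1_V x$. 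For the inductive step ($\alpha > 1$), I assume symmetry for every $\beta \in [1, \alpha)$. Given $x \precsim^\alpha_V y$ and $W' \in \mathcal{B}$, I pick $W \subseteq W'$ in $\mathcal{B}$ and obtain $v \in V$ with $vy \precsim^\beta_W x$ for all $\beta < \alpha$. Applying Lemma \ref{sim_facts}.(\ref{sim_shift}) with $v^{-1}$ and using conjugation-invariance of $W$ rewrites this as $y \precsim^\beta_W v^{-1} x$. The inductive hypothesis flips it (for $\beta \geq 1$) to $v^{-1} x \precsim^\beta_W y$, and the base case just proved handles $\beta = 0$ by converting $y \precsim^0_W v^{-1} x$ into $v^{-1} x \precsim^0_{W'} y$. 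Monotonicity in the subscript promotes each $\precsim^\beta_W$ to $\precsim^\beta_{W'}$, so $v' := v^{-1} \in V$ witnesses $y \precsim^\alpha_V x$.

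For (3), reflexivity of $\precsim^\alpha_H$ is a routine transfinite induction taking $v = e$ at every step; symmetry is (2) specialized to $V = H \in \mathcal{B}$; and transitivity follows from Lemma \ref{sim_facts}.(\ref{sim_trans}) applied with $V = W = H$, using $H \cdot H = H$. The hard part will be the base case of (2): reversing $w_n' v^{-1} x \to y$ into $u_k y \to v^{-1} x$. Since the family $\{(w_n')^{-1}\}_n$ of self-homeomorphisms of $Y$ need not be equicontinuous at $y$, I will have to choose the subsequence $n_k$ diagonally, balancing the pointwise continuity modulus of each $(w_{n_k}')^{-1}$ against the convergence rate of $w_n' v^{-1} x$ to $y$, and this is precisely where the TSI hypothesis enters in an essential way through conjugation-invariance of the basis $\mathcal{B}$.
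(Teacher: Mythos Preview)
Your handling of (1), of the inductive step of (2) for $\beta \geq 1$, and of (3) is the same as the paper's and is fine. You are also right that the case $\alpha = 1$ of (2) is the crux --- the paper's own write-up simply says ``by inductive hypothesis we have $x \precsim^\beta_W vy$'' and applies this at $\beta = 0$, where no such hypothesis is available, so an honest argument is genuinely called for.

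But your proposed diagonal-subsequence argument for this reversal does not work. From $x \precsim^1_V y$ and a \emph{single} choice of $W$ you extract $v \in V$ and $w_n' \in W$ with $w_n' v^{-1}x \to y$, and you then want a subsequence with $(w_{n_k}')^{-1} y \to v^{-1}x$. Your plan is to choose $n_k$ so that the continuity modulus of the homeomorphism $(w_{n_k}')^{-1}$ at $y$ dominates $d_Y(w_{n_k}'v^{-1}x,\, y)$. This is circular: once $n_k$ is chosen, \emph{both} the relevant modulus \emph{and} the error $d_Y(w_{n_k}'v^{-1}x,\, y)$ are already determined, with no remaining freedom to force the inequality. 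The TSI hypothesis gives you conjugation-invariance of $\mathcal{B}$ in $H$ --- which you have already spent to get $w_n' \in W$ --- not equicontinuity of the family $\{(w')^{-1} : w' \in W\}$ acting on $Y$, and equicontinuity is what your argument would actually need. In fact the intermediate statement you are trying to prove here is precisely that $y \precsim^0_W a$ implies $a \precsim^0_{W'} y$ for $W \subseteq W'$ (with $a = v^{-1}x$), i.e.\ symmetry of $\precsim^0$ up to enlarging the neighborhood, and that is simply false for TSI actions: for $H=\mathbb{R}$ acting on the plane by a smooth flow with an attracting limit cycle, with $a$ on the cycle and $y$ in its basin, one has $a\in\overline{Hy}$ while $y\notin\overline{Ha}$. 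The same flawed reversal reappears verbatim in your inductive step when you write ``the base case just proved handles $\beta = 0$ by converting $y \precsim^0_W v^{-1}x$ into $v^{-1}x \precsim^0_{W'} y$.'' Any correct treatment of the base case must exploit the hypothesis $x \precsim^1_V y$ at \emph{many} scales $W$ simultaneously, not reduce to a single instance of $\precsim^0$ and then try to flip it.
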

\begin{proof}
For (\ref{ref_1}), by taking $U$ to be $X$ in the definition of $x \leftrightsquigarrow y$, we can find $g^x, g^y$ such that $g^y y \sim^0_V g^x x$. By Lemma \ref{sim_facts}(\ref{sim_shift}) and the conjugation-invariance of $V$, we get $g y \sim^0_V x$ for $g = (g^x)^{-1}g^y$ as desired.

For (\ref{ref_2}), it follows immediately from the definitions that $\sim_V^\alpha \subseteq \precsim_V^\alpha$, so it suffices to show that $\precsim_V^\alpha \subseteq \sim_V^\alpha$. This amounts to showing that $\precsim_V^\alpha$ is symmetric.
If $x \precsim^\alpha_{V} y$ and $W\in\mathcal{B}$, then there is some $v \in V$ such that for every $\beta < \alpha$, we have that $ vy \sim^\beta_{W} x$. By Lemma \ref{sim_facts}(\ref{sim_shift}) it follows by the conjugation-invariance of $\mcW$ that $v^{-1} x \sim^\beta_{W}y$ for all $\beta<\alpha$. Since $V$ is symmetric, $v^{-1} \in V$. Hence, $ y \precsim^\alpha_{V} x$.

For (\ref{ref_3}), transitivity follows from  Lemma \ref{sim_facts}(\ref{sim_trans}), and symmetry by (\ref{ref_2}) above.

\end{proof}

In the rest of this section we will want to refer to the relations $\precsim^\alpha_\mcV$ and $\sim^\alpha_\mcV$ computed according to multiple Polish topologies on the same space. For any topology $\sigma$ making $(\Yspace, \sigma)$ a Polish $\Hspace$-space, we will use the notation $\precsim^{\alpha, \sigma}_\mcV$ and $\sim^{\alpha, \sigma}_\mcV$ to refer to the relations $\precsim^\alpha_\mcV$ and $\sim^\alpha_\mcV$ as computed in that space. We will use $\tau$ 
to refer to the original topology on $\Yspace$, but keep denoting $\precsim^{\alpha, \tau}_\mcV$ and $\sim^{\alpha, \tau}_\mcV$ simply by $\precsim^{\alpha}_\mcV$ and $\sim^\alpha_\mcV$. For every $V\in \mathcal{B}$, $c\in\Yspace$, and ordinal $\beta > 0$, let
\[A^{\beta}_V(c):=\{d\in \Yspace\mid \forall \gamma < \beta, d \sim^{\gamma}_\mcV c\}.\]
The following technical lemma will be useful.

\begin{lemma}\label{L:last}
Let $\sigma$ be an additional topology on $\Yspace$ so that both $\Yspace$ and $(\Yspace, \sigma)$ are Polish $\Hspace$-spaces. Let $a, b, c \in \Yspace$ and let $\alpha \ge 2$ be an ordinal so that: 
\begin{enumerate}
    \item for every $\beta < \alpha$ we have  $a \sim_\Hspace^{\beta} c$ and $b \sim_\Hspace^{\beta} c$; and
    \item for every $\mcV, \mcW \in \mcB$, the set $(A^\alpha_\mcV(c))^{\Delta \mcW}$ is  in $\sigma$.
\end{enumerate}
Then
\[ a \sim_\Hspace^{1,\sigma} b \implies a \sim_\Hspace^{\alpha} b.\]
\end{lemma}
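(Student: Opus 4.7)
The plan is to show $a \precsim^{\alpha}_\Hspace b$ by fixing $W \in \mcB$ and constructing a single $v \in \Hspace$ for which $vb \precsim^{\beta}_W a$ holds simultaneously for every $\beta < \alpha$. Choosing $W_0 \in \mcB$ with $W_0^2 \subseteq W$, transitivity (Lemma \ref{sim_facts}(\ref{sim_trans})) lets one try to reduce the target to $vb$ being suitably related to $c$ and $c$ being suitably related to $a$; the $\sigma$-topology's role will be to organize the countably many rank-$\beta$ conditions into a coherent Baire-category problem in $\Hspace$.

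The first step is a technical computation: from $a \precsim^{\beta}_\Hspace c$ and $b \precsim^{\beta}_\Hspace c$ for $\beta < \alpha$, using Lemma \ref{sim_facts}(\ref{sim_shift}), the symmetry and equivalence structure from Lemma \ref{L:TSI_facts}(\ref{ref_2},\ref{ref_3}), and conjugation-invariance of the basis $\mcB$, one derives that for every $\beta' < \alpha$ and $\mcV \in \mcB$ the sets $\{v \in \Hspace : va \in A^{\beta'}_\mcV(c)\}$ and $\{v \in \Hspace : vb \in A^{\beta'}_\mcV(c)\}$ each contain a non-empty open subset of $\Hspace$ (obtained by translating a single witness $u$ by a small $\mcW_0 \in \mcB$). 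Consequently $a, b \in (A^{\beta'}_\mcV(c))^{\Delta \Hspace}$, and by a further translation one finds $g \in \Hspace$ with $ga, gb \in (A^{\beta'}_\mcV(c))^{\Delta \mcW}$ for any $\mcW \in \mcB$, which is $\sigma$-open by hypothesis. Continuity of the $\Hspace$-action on $(\Yspace, \sigma)$ then makes $\{v \in \Hspace : vy \in (A^{\beta'}_\mcV(c))^{\Delta \mcW}\}$ an $\Hspace$-open set.

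Next I would invoke Proposition \ref{sim_to_invariance} in the Polish $\Hspace$-space $(\Yspace, \sigma)$ applied to both $a \precsim^{1,\sigma}_\Hspace b$ and its symmetric partner $b \precsim^{1,\sigma}_\Hspace a$ (Lemma \ref{L:TSI_facts}(\ref{ref_2})) to transfer Vaught-$\Delta$-membership of $\sigma$-closed sets between $a$ and $b$. Dualized against the $\sigma$-open cone transforms, this yields that access to these transforms by the two $\Hspace$-orbits is tightly coupled. Via transitivity through $c$, the original constraint $vb \precsim^{\beta}_W a$ then gets rephrased as $vb$ lying in suitable localized cone transforms at $c$, which are $\sigma$-open and whose preimages under $v \mapsto vb$ are $\Hspace$-open.

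The main obstacle is the final synchronization: one needs a single $v$ in the countable intersection $\bigcap_{\beta < \alpha}\{v \in \Hspace : vb \precsim^{\beta}_W a\}$ of $\Hspace$-open sets. As these are only guaranteed to be non-empty a priori, one must upgrade to showing they are comeager in a common non-empty open $\mcO \subseteq \Hspace$, whereupon Baire category in the Polish group $\Hspace$ provides $v$. This is precisely where the full strength of the $\sigma$-openness of the cone Vaught transforms is used: together with the $\sigma$-symmetry provided by $a \precsim^{1,\sigma}_\Hspace b$, it forces the rank-$\beta$ conditions to be definable as \emph{dense} $\sigma$-open subsets of a common open neighborhood in $\Hspace$, making the Baire category argument go through.
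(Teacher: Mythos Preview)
Your proposal has a genuine gap at the synchronization step, and it overlooks the mechanism the paper actually uses.

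First, the detour through Proposition~\ref{sim_to_invariance} is both unnecessary and ineffective here. Applied in $(\Yspace,\sigma)$ at rank~$1$, it only tells you that $a$ and $b$ agree on $\sigma$-closed sets at the level of $(\,\cdot\,)^{\Delta\Hspace}$; this is orbit-global information and does not manufacture the specific group elements required by the definition of $\precsim^{\alpha}_{\Hspace}$. You already have $a,b\in(A^{\beta'}_{\mcV}(c))^{\Delta\Hspace}$ directly from the hypotheses $a\precsim^{\beta}_{\Hspace}c$ and $b\precsim^{\beta}_{\Hspace}c$, so the proposition adds nothing. More seriously, your Baire-category endgame does not hold together: you claim each set $\{v\in\Hspace : vb\precsim^{\beta}_{W}a\}$ is ``$\Hspace$-open'' and then speak of ``dense $\sigma$-open subsets of a common open neighborhood in $\Hspace$'', but $\sigma$ is a topology on $\Yspace$, not on $\Hspace$, and the relation $vb\precsim^{\beta}_{W}a$ is a recursive condition on $vb$, not membership in a single $\sigma$-open set. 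No reason is given why these subsets of $\Hspace$ should be open, dense, or comeager in a common neighborhood, so the countable intersection argument has no footing.

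The paper sidesteps the whole synchronization issue by using the hypothesis $a\precsim^{1,\sigma}_{\Hspace}b$ in a way you never do: it unpacks that hypothesis \emph{once}, fixing $\mcV_0\in\mcB$ with $\mcV_0^{-1}\subseteq\mcV$ and choosing a single $h\in\Hspace$ with $hb\precsim^{0,\sigma}_{\mcV_0}a$, a $\sigma$-closure statement. This one $h$ is the witness for \emph{every} $\beta<\alpha$ simultaneously; no intersection over $\beta$ is taken. To check $hb\precsim^{\beta}_{\mcV}a$ for a given $\beta$ and $\mcW$, one picks $\mcW_0\in\mcB$ with $\mcW_0^4\subseteq\mcW$, uses $a\precsim^{\beta}_{\Hspace}c$ to find $g$ with $ga\in A^{\gamma}_{\mcW_0}(c)$, notes that $(A^{\gamma}_{\mcW_0^2}(c))^{\Delta\mcW_0 g}$ is a $\sigma$-open neighborhood of $a$, and then the $\sigma$-closure relation between $hb$ and $\mcV_0 a$ forces some $vhb$ into that neighborhood. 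Transitivity through $c$ (Lemma~\ref{sim_facts}(\ref{sim_trans})), symmetry (Lemma~\ref{L:TSI_facts}(\ref{ref_2})), and conjugation-invariance of $\mcB$ (Lemma~\ref{sim_facts}(\ref{sim_shift})) then yield $v^{-1}a\precsim^{\gamma}_{\mcW}hb$ with $v^{-1}\in\mcV$. The crucial point you are missing is that the rank-$0$ $\sigma$-relation is exactly the statement that survives passage to arbitrary $\sigma$-open neighborhoods of $a$; that is why the cone Vaught transforms were made $\sigma$-open, and why no Baire-category argument is needed.
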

\begin{proof}
Let $\mcV$ be an arbitrary open neighborhood of the identity of $\Hspace$. Our goal is to show that there is some $h \in \Hspace$ such that for every $\beta < \alpha$, $h b \sim_\mcV^{\beta} a$. To that end, let $\mcV_0\in\mathcal{B}$ with $\mcV_0^{-1} \subseteq \mcV$, and fix $h \in \Hspace$ such that $h b \sim^{0, \sigma}_{\mcV_0} a$. Fix any ordinal $\beta$ such that $\beta< \alpha$. We claim that $h b \sim^{\beta}_\mcV a$. To see this, let  $\mcW\in\mathcal{B}$. We will find some $v \in \mcV$ such that $v a \sim^{ \gamma}_\mcW h b$, for all $\gamma<\beta$.

Let $\mcW_0\in\mathcal{B}$ so that $\mcW_0^4 \subseteq \mcW$. Because $a \sim^\beta_H c$ we may choose some $g \in \Hspace$ such that for every $\gamma < \beta$, $g  a \sim_{\mcW_0}^{\gamma} c$, in which case $g a \in A^\beta_{\mcW_0}(c)$.
By Lemma \ref{sim_facts}(\ref{sim_trans}), we can see that $wga \in A^\beta_{\mcW_0^2}(c)$ for any $w \in W_0$.
Thus $a \in (A^\beta_{\mcW_0^2}(c))^{\Delta \mcW_0 g}$. 
Since $(A^\beta_{\mcW_0^2}(c))^{\Delta \mcW_0 g}$ is an open neighborhood of $a$ in $\sigma$, there is $v \in \mcV_0$ such that $vh  b \in (A^\beta_{\mcW_0^2}(c))^{\Delta \mcW_0 g}$. 
By definition, for some $w \in \mcW_0$, we have $\forall \gamma < \beta, wgvh  b \sim_{\mcW_0^2}^{\gamma} c$. 
For every $\gamma < \beta$, since $ga \sim^\gamma_{W_0} c$ and $wgvh  b \sim_{\mcW_0^2}^{\gamma} c$,  by Lemma \ref{sim_facts}(\ref{sim_trans}), we have that $g a \sim^{ \gamma}_{\mcW_0^3} wgvh  b$, and thus $g a \sim^{\gamma}_{\mcW_0^4} gvh  b$.
By Lemma \ref{sim_facts}(\ref{sim_shift}), we get $v^{-1} a \sim^{\gamma}_{\mcW_0^4} h  b$ and thus $v^{-1} a \sim^{\gamma}_{\mcW} h  b$ for every $\gamma < \beta$ as desired.
\end{proof}

We may now proceed to the proof of Theorem \ref{Th:main}.

\begin{proof}[Proof Theorem \ref{Th:main}]
Let $f : \Xspace \rightarrow \Yspace$ be a Baire-measurable homomorphism from $E^\Gspace_\Xspace$ to $E^\Hspace_\Yspace$, for some Polish $\Hspace$-space $\Yspace$, where $\Hspace$ is a TSI Polish group. 

\begin{claim} For all $1 \le \alpha<\omega_1$ there is  comeager $C_{\alpha}\subseteq\Xspace$ so that for all $x, y \in C_{\alpha}$,
\[f(x) \sim^{\alpha}_H f(y).\]
\end{claim}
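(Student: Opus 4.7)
The plan is a transfinite induction on $\alpha$, which is cleaner if I prove the equivalent reformulation: for every $1 \le \alpha < \omega_1$ there is a comeager $D_\alpha \subseteq \Xspace$ with $f(x) \precsim^{\alpha}_{\Hspace} f(y)$ for all $x, y \in D_\alpha$. The claim as stated then follows by setting $C_\alpha := D_{\alpha+1}$.

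For the base case $\alpha = 1$, I would apply Theorem \ref{push_forward_arrows} to extract a $\Gspace$-invariant comeager $C \subseteq \Xspace$ on which $\leftrightsquigarrow$ pushes forward under $f$. Generic semi-connectedness of $(\Xspace/\Gspace, \leftrightsquigarrow)$ then shrinks $C$ to a comeager $D_1 \subseteq C$ any two of whose orbits are joined by a $\leftrightsquigarrow$-path in $C/\Gspace$. Pushing such a path forward by $f$ and applying Lemma \ref{L:TSI_facts}(\ref{ref_1}) edge by edge, the transitivity from Lemma \ref{L:TSI_facts}(\ref{ref_3}) collapses the chain into $f(x) \precsim^{1}_{\Hspace} f(y)$.

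For the inductive step at $\alpha \ge 2$, set $D := \bigcap_{1 \le \beta < \alpha} D_\beta$, comeager as a countable intersection. On $D$ we have $f(x) \precsim^{\beta}_{\Hspace} f(y)$ for every $1 \le \beta < \alpha$; the case $\beta = 0$ follows from $\precsim^{1}_{\Hspace}$ together with Lemma \ref{L:TSI_facts}(\ref{ref_2}) and the conjugation-invariance of $\mathcal{B}$, via the short computation showing $f(x) \in \overline{\Hspace f(y)}$. Fix one $z_0 \in D$ and set $c := f(z_0)$. The sets $A^{\beta}_{V}(c)$ for $\beta < \alpha$, $V \in \mathcal{B}$, and hence their Vaught transforms $(A^{\beta}_{V}(c))^{\Delta W}$ for $W \in \mathcal{B}$, form a countable family of Borel subsets of $\Yspace$ (Borel complexity is established by transfinite induction on $\beta$). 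By the standard topology-refinement theorem for Polish group actions, I can pick a finer Polish topology $\sigma$ on $\Yspace$ so that $(\Yspace, \sigma)$ remains a Polish $\Hspace$-space and every one of those Vaught transforms is $\sigma$-open.

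Now I rerun the base-case argument targeted at $(\Yspace, \sigma)$: the map $f$ remains a Baire-measurable homomorphism (preimages of $\sigma$-opens are $\tau$-Borel and so have the Baire property), so the same combination of Theorem \ref{push_forward_arrows}, generic semi-connectedness, and Lemma \ref{L:TSI_facts} provides a comeager $D_\alpha \subseteq D$ on which $f(x) \precsim^{1, \sigma}_{\Hspace} f(y)$. Lemma \ref{L:last} is then directly applicable with $a = f(x), b = f(y), c = f(z_0)$ and this $\alpha \ge 2$: all its hypotheses are in place, and its conclusion gives $f(x) \precsim^{\alpha}_{\Hspace} f(y)$, completing the induction.

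The step I expect to be the principal obstacle is the limit case of the induction: at a limit $\alpha$, the inductive hypothesis supplies only pointwise witnesses for $\precsim^{\beta}_{\Hspace}$ with $\beta < \alpha$, and there is no direct way to amalgamate them into a single witness for $\precsim^{\alpha}_{\Hspace}$. Lemma \ref{L:last} is precisely what bypasses this difficulty: anchoring on a fixed reference point $c = f(z_0)$ and refining the topology so $\sigma$ sees the downward cones $A^{\beta}_{V}(c)$, one boosts the cheap first-level comparison $\precsim^{1, \sigma}_{\Hspace}$ (obtained by rerunning the base case) to the uniform $\precsim^{\alpha}_{\Hspace}$ needed, without manipulating a limit of witnesses by hand.
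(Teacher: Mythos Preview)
Your proposal is correct and follows essentially the same route as the paper: the base case uses Theorem~\ref{push_forward_arrows}, generic semi-connectedness, and Lemma~\ref{L:TSI_facts} to get $\precsim^{1}_{\Hspace}$, and the inductive step fixes a reference point $c=f(z_0)$, refines the topology so the Vaught transforms of the cones $A^{\beta}_{V}(c)$ become open, reruns the base case in $(\Yspace,\sigma)$, and then invokes Lemma~\ref{L:last}. Your reindexing to $D_\alpha$ with $\precsim^{\alpha}_{\Hspace}$ is exactly what the paper's proof in fact establishes (the $\alpha+1$ in the statement is never used), and your explicit identification of why the limit stage is the crux and how Lemma~\ref{L:last} resolves it is on point. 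One minor wording issue: when you argue that $f$ remains Baire-measurable into $(\Yspace,\sigma)$, the correct justification is that $\sigma$-open sets are $\tau$-Borel and Baire-measurable maps pull Borel sets back to sets with the Baire property; preimages need not themselves be Borel.
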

\begin{proof}
For $\alpha=1$, by Lemma \ref{push_forward_arrows} we have a comeager set $D$ such that for any $x, y \in D$, if $x \leftrightsquigarrow y$, then $f(x) \leftrightsquigarrow f(y)$. Since $(\Xspace/\Gspace,\leftrightsquigarrow)$ is generically semi-connected, we can find a comeager set $C \subseteq D$ such that for any $x, y \in C$, there is a $\leftrightsquigarrow$-path between $x$ and $y$ through $D$. In particular, by Lemma \ref{L:TSI_facts}(\ref{ref_3}), for any $x, y \in C$, we have $f(x) \sim^1_\Hspace f(y)$. So we may set $C_1:=C$.

Assume now that for some countable $\alpha\geq 2$ we have that defined $C_{\beta}$ for all $\beta<\alpha$ as in the claim. 
Fix some $z \in \bigcap_{\beta<\alpha} C_{\beta}$.
Observe that the set $A^{\alpha}_\mcV(f(z)) = \{a \in \Yspace \mid \forall \beta < \alpha, a \sim^{\beta}_\mcV f(z) \}$ is Borel, and $f(x) \in A^{\alpha}_\mcV(f(z))$ for every $x \in \bigcap_{\beta < \alpha} C_\beta$.
Find a new topology $\sigma$ on $\Yspace$ such that: $(A^\alpha_\mcW)^{\Delta \mcV}$ is open for every $\mcV, \mcW \in \mcB$;  $(\Yspace, \sigma)$ is a Polish $\Hspace$-space; and $\sigma$ generates the same Borel sets as $\tau$ (see \cite[Lemma 4.4.3]{Gao2008}). 
Applying Lemma \ref{push_forward_arrows} and generic semi-connectedness of $(X/G, \leftrightsquigarrow)$ as in the previous paragraph with the space $(\Yspace, \sigma)$ in place of $\Yspace$, we can find a comeager set $C \subseteq \bigcap_{\alpha<\beta}C_{\alpha}$ so that for every $x, y \in C$, $f(x) \sim_\Hspace^{\sigma, 1} f(y)$.  
By Lemma \ref{L:last}, taking $c$ to be $f(z)$, we have that $f(x) \sim_\Hspace^{\alpha} f(y)$, for every $x, y \in C$. Set $C_{\alpha}:=C$.
\end{proof}

\begin{claim}
There is a comeager set $C \subseteq \Xspace$ and a countable ordinal $\lambda$ such that for every $x \in C$, $[f(x)]$ is $\BPi^0_\lambda$
\end{claim}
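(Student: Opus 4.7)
The goal is to obtain a uniform countable bound $\lambda$ on the Borel rank of $[f(x)]$ over a comeager set of $x$. Once this is secured, Theorem \ref{Th:main} follows: for a fixed $x_0$ in the resulting comeager set $C$, the orbit $A := [f(x_0)]$ is $\BPi^0_\lambda$ and $\Hspace$-invariant, so $A^{\Delta \Hspace} = A$. By the previous claim applied with a sufficiently large countable ordinal, there is a comeager $C' \subseteq \Xspace$ with $f(x_0) \precsim^{\lambda+1}_{\Hspace} f(y)$ (and hence $\precsim^{\lambda}_{\Hspace}$, by unfolding the definition) for every $y \in C'$, and Proposition \ref{sim_to_invariance} then forces $f(y) \in A$, giving $f(x_0) \, E^{\Hspace}_{\Yspace} \, f(y)$.

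The plan for the current claim is a standard Borel-coding-and-boundedness argument. First, by Lemma \ref{orbit_continuity}(1), restrict to a comeager $\Gspace$-invariant $G_\delta$ set $C^\dagger \subseteq \Xspace$ on which $f$ is continuous. By the classical theorem of Becker and Kechris, every orbit of a Polish group action on a Polish space is Borel, so each $[f(x)]$ has some countable Borel rank. Fix a universal parametrization $\pi$ of the Borel subsets of $\Yspace$ by codes in $\omega^\omega$, and for each $\lambda < \omega_1$ let $D_\lambda \subseteq \omega^\omega$ denote the $\BPi^1_1$ set of Borel codes of rank at most $\lambda$.

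Next, consider the relation
\[R := \{(x, c) \in C^\dagger \times \omega^\omega : c \text{ is a Borel code with } \pi(c) = [f(x)]\}.\]
Each vertical section $R_x$ is nonempty, and checking equality of two Borel sets from their codes is a $\BPi^1_1$ condition, so $R$ belongs to the $\sigma$-algebra generated by the analytic sets. The Jankov--von Neumann uniformization theorem then produces a Baire-measurable section $s : C^\dagger \to \omega^\omega$ with $(x, s(x)) \in R$ for every $x$. Restricting further to an invariant comeager $C \subseteq C^\dagger$ on which $s$ is continuous, the image $s(C)$ is analytic, and the standard boundedness theorem for $\BPi^1_1$-ranks yields some $\lambda < \omega_1$ with $s(C) \subseteq D_\lambda$. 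Hence $[f(x)]$ is $\BPi^0_\lambda$ for every $x \in C$.

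The main technical obstacle will be verifying the descriptive complexity of $R$ and arranging the coding so that the uniformization and boundedness theorems apply cleanly; this is routine once the universal Borel parametrization is fixed. The substantive content is simply the combination of Becker--Kechris (orbits are Borel) with boundedness of $\BPi^1_1$-ranks on analytic sets, together with the Baire-category regularization of $f$ from Lemma \ref{orbit_continuity}.
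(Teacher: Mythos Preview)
Your overall strategy---uniformize to get a Borel code for each orbit, then apply boundedness to the image---is natural, but the step you flag as ``routine'' is exactly where the argument breaks. The relation
\[
R=\{(x,c)\in C^{\dagger}\times\omega^{\omega}: c\text{ is a Borel code and }\pi(c)=[f(x)]\}
\]
is not in the $\sigma$-algebra generated by $\BSigma^1_1$ for the reason you give. Your justification (``checking equality of two Borel sets from their codes is a $\BPi^1_1$ condition'') applies only when \emph{both} sets are presented by Borel codes. Here $[f(x)]$ is presented merely as the $\BSigma^1_1$ set $\Hspace\cdot f(x)$, not by a code. The inclusion $[f(x)]\subseteq\pi(c)$ is indeed $\BPi^1_1$, but the reverse inclusion unfolds to
\[
\forall y\ \bigl(\,y\notin\pi(c)\ \lor\ \exists h\in \Hspace\ (h\cdot f(x)=y)\,\bigr),
\]
whose matrix is a disjunction of a $\BPi^1_1$ predicate and a $\BSigma^1_1$ predicate; the outer universal quantifier over $y$ then drives the whole expression up to $\BPi^1_2$. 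There is no evident way to bring it back into $\sigma(\BSigma^1_1)$ without already possessing a uniform Borel code for $[f(x)]$ or its complement---which is precisely what the uniformization was supposed to manufacture. So Jankov--von~Neumann does not apply as stated, and the ``routine'' step is in fact the entire content of the claim.

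The paper avoids this circularity by invoking the Becker--Kechris decomposition theorem directly: there is a Baire-measurable map $g\colon \Yspace\to 2^{\omega\times\omega}$ into well-orders, together with an $\omega_1$-sequence of pairwise disjoint invariant Borel sets $(A_{\zeta})_{\zeta<\omega_1}$ such that $E^{\Hspace}_{\Yspace}\upharpoonright A_{\zeta}$ is Borel and $y\in A_{|g(y)|}$ for every $y$. Then $g\circ f$ is Baire-measurable, hence continuous on a dense $G_{\delta}$ set $C\subseteq\Xspace$, the image $(g\circ f)[C]$ is $\BSigma^1_1$, and $\BSigma^1_1$-boundedness yields a single $\gamma<\omega_1$ with $f[C]\subseteq A_{\gamma}$. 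Since $E^{\Hspace}_{\Yspace}\upharpoonright A_{\gamma}$ is Borel, a uniform $\lambda$ bounding the rank of each $[f(x)]$ follows immediately. In other words, the nontrivial uniformization you need is already packaged inside the Becker--Kechris theorem; trying to reproduce it by hand via codes for individual orbits runs straight into the $\BPi^1_2$ obstacle above.
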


\begin{proof}
By \cite[Theorem 7.3.1]{BeckerKechris1996}, there is a Baire-measurable function $g : \Yspace \rightarrow 2^{\omega \times \omega}$ and a sequence $\{A_\zeta\}_{\zeta \in \omega_1}$ of pairwise-disjoint invariant Borel sets such that $E^\Hspace_\Yspace \upharpoonright A_\zeta$ is Borel for every $\zeta \in \omega_1$, and $g(y)$ is a well-order such that $y \in A_{|g(y)|}$ for every $y \in \Yspace$. As $g$ is Baire-measurable and thus so is $g \circ f$, we may find a dense $G_\delta$ subset $C \subseteq \Xspace$ such that $(g \circ f) \upharpoonright C$ is continuous. Applying $\BSigma^1_1$-boundedness (see \cite[Theorem 31.2]{Kechris1995}) to $(g \circ f) \upharpoonright C$, we can find a countable ordinal $\gamma$ such that $f[C] \subseteq A_\gamma$. Then since $E^\Hspace_\Yspace \upharpoonright A_\gamma$ is Borel, we can find the countable ordinal $\lambda$ with the property that $[f(x)]$ is $\BPi^0_\lambda$ for every $x \in C$.
\end{proof}

Fix now $\lambda$ and $C \subseteq \Xspace$ as in the claim such that for any $x \in C$, $[f(x)]$ is $\BPi^0_\lambda$. By the previous claim, the set $D:=C\cap C_{\lambda}$ is comeager and for any $x, y \in D$, $f(x) \sim^\lambda_\Hspace f(y)$. 
By Lemma \ref{sim_to_invariance}, this means that for any $\BPi^0_\lambda$ set $A$, $f(x) \in A^{\Delta \Hspace}$ iff $f(y) \in A^{\Delta \Hspace}$. Since $[f(x)]$ is $\BPi^0_\lambda$ for all $x\in D$, we have that every $x\in D$ maps to the same $\Hspace$-orbit in $\Yspace$.
\end{proof}

\section{Applications}\label{S:Applications}

In this section we illustrate how the ``in vitro" results we have  developed so far apply to natural classification problems from topology and operator algebras.  We start by reviewing some definitions regarding fibre bundles. We then show that \emph{coordinate free isomorphism between Hermitian line bundles} and \emph{Morita equivalence between continuous-trace $C^*$-algebras} are not classifiable by TSI-group actions.

\subsection{The Polish space of locally trivial fibre bundles}\label{SS:Bundles}

Let $B$ be a locally compact metrizable topological space, and let $F$ be a Polish $G$-space, for some Polish group $G$. A  locally trivial fibre bundle over $B$ with fibre $F$ and structure group $G$, or simply a {\bf fibre bundle over $B$} consists of  a Polish space $E$;  a continuous map $p\colon E\to B$; a locally finite open cover $\mathcal{U}$ of $B$;
and a homeomorphism $h_U\colon p^{-1}(U)\to U\times F$, for each $U\in\mathcal{U}$; so that: 
\begin{enumerate}
    \item if $b\in U\in \mathcal{U}$, then $h_U$ restricts to a homeomorphism from $p^{-1}(b)$ to $\{b\}\times F$; 
    \item if $U,V\in\mathcal{U}$, there is a  contiunous $t_{(U, V)}\colon U\cap V\to G$, so that for all $b\in U\cap V$,\[(h_V\circ h_U^{-1})(b,f)=(b,t_{(U,V)}(b)f)\]
\end{enumerate}
The maps $h_U$ above are called {\bf charts} and $t_{(U,V)}$ are called the {\bf transition maps}. Notice that we can always choose $\mathcal{U}$ to be a subset of some fixed countable basis $\mathcal{B}$ of the topology of $B$, and we can recover $E$ as the colimit of the above separable data (together with a $1$-cocycle condition). Hence,  we may form the Polish space  $\mathrm{Bun}(B,G,F)$,  of all locally trivial fibre bundle over $B$, with fibre $F$, and structure group $G$, as a $G_{\delta}$ subset of the Polish space
\[2^{\mathcal{B}}\times \prod_{(U,V)\in\mathcal{B}^2} C(U\cap V, G) \]

There are two natural classification problems on $\mathrm{Bun}(B,G,F)$: the {\bf isomorphism} relation $\simeq_{\mathrm{iso}}$; and the {\bf isomorphism over $B$} relation $\simeq_{\mathrm{iso}}^B$. First, notice that 
If $p,q\colon E\to B$ are elements of $\mathrm{Bun}(B,G,F)$, then we may always choose a common open cover $\mathcal{U}$ of $B$ so that $p$ and $q$ are locally trivialized by some $(h_U), (t_{(U,V)})$ and  $(k_U), (s_{(U, V)})$, respectively, with $U,V\in\mathcal{U}$. We write $p\simeq_{\mathrm{iso}}q$, if there are homeomorphisms $\pi\colon E\to E$, $\rho\colon B\to B$, and a continuous $e_{(U, V}\colon U \cap  \rho^{-1}(V) \to G$, so that $q \circ \pi = \rho \circ p$, and for all $U,V\in\mathcal{U}$, for all $b\in U\cap \rho^{-1}(V)$, and for all $f\in F$ we have that  \[  (h_V\circ \pi \circ h^{-1}_U)(b,f)= \big(\rho(b),e_{(U, V)}(b)f\big).\]
We write $p\simeq_{\mathrm{iso}}^{B}q$ if $\rho$ above can be taken to be $\mathrm{id}_B$.

\subsection{Isomorphism of Hermitian line bundles}\label{SS:Herm} Let $B$ be a locally compact metrizable space. By a {\bf Hermitian line bundle over $B$} we mean any locally trivial fibre bundle over $B$ with fibre $F:=\mathbb{C}$ and structure group $G:=\mathrm{U}(\mathbb{C})=\mathbb{T}$ being the unitary group of $\mathbb{C}$ acting on $\mathbb{C}$ with rotations.  Let $\mathrm{Bun}_{\mathbb{C}}(B)$ be the standard Borel space of all Hermitian line bundles over $B$. By a result of \cite{BLP2019}, $\simeq_{\mathrm{iso}}^{B}$ is  classifiable by TSI group actions:

\begin{proposition}[\cite{BLP2019}(Corollary 5.12.)]
The problem $(\mathrm{Bun}_{\mathbb{C}}(B),\simeq_{\mathrm{iso}}^{B})$ is classifiable by non-Archimedean, abelian group actions.
\end{proposition}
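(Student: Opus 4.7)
The plan is to realize the equivalence relation $\simeq_{\mathrm{iso}}^B$ as the orbit equivalence relation of a continuous action of a countable product of copies of $\mathbb{Z}$ on another such product, which is a non-Archimedean abelian Polish group action. The underlying mathematical principle is that Hermitian line bundles over $B$ are classified up to isomorphism over $B$ by their Chern class in $\check{H}^2(B;\mathbb{Z}) \cong \check{H}^1(B;\underline{\mathbb{T}})$, and the latter admits a presentation as cocycles modulo coboundaries in a Čech complex of countable free abelian groups.

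First I would fix a cofinal sequence $(\mathcal{U}_n)_{n\in\omega}$ of countable \emph{good} open covers of $B$ (each $\mathcal{U}_{n+1}$ refining $\mathcal{U}_n$, and every nonempty finite intersection of members of $\mathcal{U}_n$ being contractible and connected). For $B$ a CW complex such covers can be built from a simplicial triangulation; for general locally compact metrizable $B$ one works instead with the direct limit $\varinjlim_n \check{H}^\bullet(\mathcal{U}_n;\mathbb{Z})$. For each bundle $p \in \mathrm{Bun}_{\mathbb{C}}(B)$ trivialized over some $\mathcal{U}_n$ with transition data $t_{(U,V)}\colon U\cap V \to \mathbb{T}$, I would lift each $t_{(U,V)}$ to a continuous $\mathbb{R}$-valued function $\tilde{t}_{(U,V)}$ (possible because $U\cap V$ is simply connected and $\mathbb{R}\to\mathbb{T}$ is a covering map), and then read off the integer 2-cocycle
\[
n_{(U,V,W)} := \tilde{t}_{(V,W)} - \tilde{t}_{(U,W)} + \tilde{t}_{(U,V)} \in \mathbb{Z},
\]
where constancy on the connected triple intersection uses that the displayed expression is $\mathbb{Z}$-valued and continuous. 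This produces an element of $Z^2(\mathcal{U}_n;\mathbb{Z}) \subseteq \prod_{(U,V,W)} \mathbb{Z}$. Two bundles $p, p'$ trivialized over $\mathcal{U}_n$ are isomorphic over $B$ precisely when their 2-cocycles $(n_{(U,V,W)}),(n'_{(U,V,W)})$ differ by a Čech coboundary $\partial (m_{(U,V)}) = m_{(V,W)} - m_{(U,W)} + m_{(U,V)}$ for some $(m_{(U,V)}) \in C^1(\mathcal{U}_n;\mathbb{Z})$.

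Next I would package this into a Borel reduction. The acting group is the non-Archimedean abelian Polish group $\Hspace := \prod_{n}\prod_{(U,V) \in \mathcal{U}_n^2} \mathbb{Z}$ (with the product topology) acting by coboundary on the Polish space $\Yspace := \prod_{n}\prod_{(U,V,W) \in \mathcal{U}_n^3} \mathbb{Z}$; the refinement maps $\mathcal{U}_{n+1} \to \mathcal{U}_n$ let us compare cocycles at different levels, so that orbits of $\Hspace\curvearrowright\Yspace$ correspond exactly to elements of $\varinjlim_n \check{H}^2(\mathcal{U}_n;\mathbb{Z}) = \check{H}^2(B;\mathbb{Z})$. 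The reduction $p \mapsto (n_{(U,V,W)})_{n,(U,V,W)}$ will then satisfy $p \simeq_{\mathrm{iso}}^B p'$ iff the images are in the same $\Hspace$-orbit.

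The hard part will be showing that all of these choices can be made \emph{Borel-measurably} and uniformly in $p$. Three selection issues arise: first, extracting a Borel section $p \mapsto n(p)$ picking an index $n(p)$ and a trivialization of $p$ over $\mathcal{U}_{n(p)}$; second, Borel selection of the $\mathbb{R}$-valued lifts $\tilde{t}_{(U,V)}$, which I would do by fixing a basepoint in each $U\cap V$ and defining $\tilde{t}_{(U,V)}$ as the unique continuous lift taking a prescribed value at that basepoint; and third, ensuring compatibility under refinement. For each of these one may apply the Kuratowski--Ryll-Nardzewski selection theorem to suitable Borel multi-valued maps with closed nonempty values. Modulo these routine but delicate selection verifications, the reduction is continuous, and the acting group $\Hspace$ is visibly non-Archimedean (it is a countable product of discrete abelian groups) and abelian, as required.
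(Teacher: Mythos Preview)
The paper does not prove this proposition; it is simply quoted as Corollary~5.12 of \cite{BLP2019}. Your outline is in the same spirit as that reference---realize $\simeq_{\mathrm{iso}}^B$ via the Chern class in $\check{H}^2(B;\mathbb{Z})$ and present the latter as a coboundary action on integer \v{C}ech cocycles---but two of your steps are genuine gaps rather than routine selection arguments.

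First, good covers (with all nonempty finite intersections contractible) need not exist for an arbitrary locally compact metrizable $B$, so your lift of $\mathbb{T}$-valued transition maps to continuous $\mathbb{R}$-valued functions is simply unavailable in general; the aside that ``for general $B$ one works instead with the direct limit'' does not repair this, since without the lifts you have no way to produce integer cocycles from the bundle data at any level of the system. The argument in \cite{BLP2019} instead builds definable \v{C}ech cohomology over a fixed countable basis and treats the connecting map $\check{H}^1(B;\underline{\mathbb{T}})\to\check{H}^2(B;\mathbb{Z})$ of the exponential sequence as a Borel reduction directly, without relying on pointwise lifts over contractible intersections. Second, your levelwise coboundary action of $\Hspace=\prod_n C^1(\mathcal{U}_n;\mathbb{Z})$ on $\Yspace=\prod_n C^2(\mathcal{U}_n;\mathbb{Z})$ has orbit space $\prod_n H^2(\mathcal{U}_n;\mathbb{Z})$, not the colimit $\varinjlim_n H^2(\mathcal{U}_n;\mathbb{Z})$: the refinement identifications are nowhere encoded in the action you describe, so the asserted bijection between $\Hspace$-orbits and $\check{H}^2(B;\mathbb{Z})$ is false as stated. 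To get the colimit you must enlarge the acting group to include the (Borel) refinement maps, and then check that the enlarged group is still a non-Archimedean abelian Polish group.
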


In contrast, for the relation  $\simeq_{\mathrm{iso}}$ we have the following result.

\begin{corollary}\label{Cor:HermiNotTSI}
There exists a locally compact metrizable topological space $B$, so that $(\mathrm{Bun}_{\mathbb{C}}(B),\simeq_{\mathrm{iso}})$ is not classifiable by TSI group actions. In fact, $B$ can be taken to be the geometric realization of a countable, locally-finite, CW-complex.
\end{corollary}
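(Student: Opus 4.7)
The plan is to Borel reduce a generically unbalanced orbit equivalence relation into $(\mathrm{Bun}_\mathbb{C}(B),\simeq_{\mathrm{iso}})$ for a carefully-chosen countable, locally-finite CW-complex $B$, and then invoke Corollary \ref{Cor:Main}. The source of the reduction will be the Bernoulli shift action $(D_\infty \wreath D_\infty) \curvearrowright \mathbb{Z}^{\mathbb{Z} \times \mathbb{Z}}$ of the wreath product of two copies of the infinite dihedral group $D_\infty \leq S(\mathbb{Z})$, where the action is by coordinate permutation. By the identification from Section \ref{S:Wreath} of this Bernoulli shift with the $D_\infty$-jump of the shift action $D_\infty \curvearrowright \mathbb{Z}^\mathbb{Z}$, Theorem \ref{Th:main2} applies: $\mathbb{Z}^\mathbb{Z}$ is a Polish $D_\infty$-space with a dense shift-orbit, and $D_\infty$ has an infinite orbit on the indexing set $\mathbb{Z}$, so the unbalanced graph of the action is generically semi-connected. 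Moreover, every orbit of this action is a countable union of products of countable $D_\infty$-orbits, which by a routine Kuratowski-Ulam argument is meager in the uncountable Polish space $\mathbb{Z}^{\mathbb{Z} \times \mathbb{Z}}$; hence the action is generically unbalanced, and so by Corollary \ref{Cor:Main} not classifiable by TSI-group actions.

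Next, I would construct $B$ as a ``decorated infinite comb'': a ``spine'' $S \cong \mathbb{R}$ with $0$-cells at every integer; at each integer $n \in \mathbb{Z}$ of $S$ a ``tooth'' $T_n \cong \mathbb{R}$ also equipped with integer $0$-cells; and at each integer $m \in \mathbb{Z}$ of $T_n$ a decorated copy $S^2_{n,m}$ of the $2$-sphere. The decorations are chosen so that every self-homeomorphism of $B$ is forced to preserve the stratification (spine, teeth, spheres) and to act trivially on the Chern class $\mathrm{H}^2(S^2_{n,m};\mathbb{Z})$ of each individual sphere. Concretely, I would attach to each $S^2_{n,m}$ a sufficient collection of auxiliary arcs at points in general position (e.g., four arcs of pairwise distinct lengths, which, together with the tooth-attachment point, pin down any self-homeomorphism of $S^2_{n,m}$ to the orientation-preserving identity component of $\mathrm{Homeo}(S^2_{n,m})$ and hence to the trivial action on $\mathrm{H}^2$); and I would attach a small ``nub'' (dead-end $1$-cell) to every spine $1$-cell to topologically distinguish the spine from the teeth. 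A routine analysis then confirms that the induced action of $\mathrm{Homeo}(B)$ on the indexing set $\mathbb{Z} \times \mathbb{Z}$ of the spheres is precisely the wreath permutation action $D_\infty \wreath D_\infty$.

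Finally, I would define a Borel map $\Phi\colon \mathbb{Z}^{\mathbb{Z} \times \mathbb{Z}} \to \mathrm{Bun}_\mathbb{C}(B)$ sending $a=(a_{n,m})$ to the Hermitian line bundle $L_a$ presented trivially on the $1$-skeleton of $B$ and, on each $S^2_{n,m}$, via the two-hemisphere cover with equator transition map $z \mapsto z^{a_{n,m}}$. By the classification of Hermitian line bundles over a CW-complex via the first Chern class in $\mathrm{H}^2$, combined with the identification of $\mathrm{Homeo}(B)$ from the previous paragraph, we would have $L_a \simeq_{\mathrm{iso}} L_b$ if and only if $a$ and $b$ lie in the same $(D_\infty \wreath D_\infty)$-orbit of $\mathbb{Z}^{\mathbb{Z} \times \mathbb{Z}}$. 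Hence $\Phi$ is a Borel reduction of the generically unbalanced relation from the first paragraph into $(\mathrm{Bun}_\mathbb{C}(B),\simeq_{\mathrm{iso}})$, and the corollary follows. The hard part is the precise identification of $\mathrm{Homeo}(B)$: without sufficient sphere-decoration, sphere orientation-reversals would coarsen the target equivalence (identifying $a_{n,m}$ with $-a_{n,m}$) and destroy the reduction; without distinguishing spine from teeth, extraneous spine-tooth interchanges could similarly ruin the argument.
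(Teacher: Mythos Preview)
Your high-level strategy is sound and considerably more elementary than the paper's route (which invokes the definable \v{C}ech-cohomology machinery of \cite{BLP2019} for the dyadic-solenoid telescope $C$, identifies $(\mathrm{Bun}_{\mathbb{C}}(C),\simeq_{\mathrm{iso}})$ with $E_0$, and then strings together a $\mathbb{Z}$-line of copies of $C$ to produce the $\Gamma$-jump $E_0^{[\Gamma]}$ for $\Gamma=(\mathbb{Z}/2\mathbb{Z})\rtimes\mathbb{Z}$). However, there is a genuine gap in your identification of the permutation group acting on the sphere indices.

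Once a tooth $T_n\cong\mathbb{R}$ is glued to the spine at a single point, that point becomes topologically distinguished in $T_n$: any self-homeomorphism of $B$ carrying $T_n$ to $T_{n'}$ must send the attachment point of $T_n$ to that of $T_{n'}$. But a homeomorphism of $\mathbb{R}$ fixing one point and permuting the integer $0$-cells is either the identity or the reflection $m\mapsto -m$; the translational part of the inner $D_\infty$ is therefore never realized by a homeomorphism of $B$. Consequently $\mathrm{Homeo}(B)$ acts on the sphere index set $\mathbb{Z}\times\mathbb{Z}$ only through $D_\infty \wreath (\mathbb{Z}/2\mathbb{Z})$, not $D_\infty \wreath D_\infty$, and the ``if'' direction of your reduction $\Phi$ fails. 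If instead you shrink the source to match this smaller group, you are taking the $D_\infty$-jump of the coordinate-flip action $\mathbb{Z}/2\mathbb{Z}\curvearrowright\mathbb{Z}^{\mathbb{Z}}$, whose orbits have size at most $2$ and are certainly not dense---so the dense-orbit hypothesis of Theorem~\ref{Th:main2} is lost and you no longer know the source is generically unbalanced. A simple repair that preserves the spirit of your construction: delete the spine and let $B$ be a countable \emph{disjoint} union of decorated tooth-lines. Then each component retains its full $D_\infty$ symmetry, the outer permutation group becomes $S_\infty$ (still a Polish permutation group with a single infinite orbit), Theorem~\ref{Th:main2} applies to the $(S_\infty\wreath D_\infty)$-action on $(\mathbb{Z}^{\mathbb{Z}})^{\mathbb{N}}$, and the remainder of your argument goes through.
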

\begin{proof}
 The dyadic solenoid $\Sigma$ is the inverse limit of the inverse system $(\mathbb{T}_i, f^j_i)$ where  $\mathbb{T}_i:=\mathbb{T}$ is the unit circle, viewed as a multiplicative subgroup of $\mathbb{C}$, and $f^j_i\colon \mathbb{T}_j\to\mathbb{T}_i$ is the two-fold cover $z\mapsto z^2$. Let $C$ be the homotopy limit of the same inverse system. This is formed by taking the disjoint union of the spaces:
 \[\mathbb{T}_0\times [0,1], \mathbb{T}_1 \times [0,1], \mathbb{T}_2 \times [0,1], \ldots\]
and identifying the point $(z,0)\in \mathbb{T}_{i+1}\times[0,1]$   with  the point $(z^2,1)\in \mathbb{T}_i\times[0,1]$, for each $i\geq 0$. Clearly $C$ is a locally finite CW-complex. 

Recall now that the quotient $\mathrm{Bun}_{\mathbb{C}}(C)/\simeq_{\mathrm{iso}}^{C}$ is in bijective correspondence with the first \v{C}ech cohomology group $H^1(C,\mathbb{T})$ of $C$ with coefficients from $\mathbb{T}$; \cite[Proposition 4.53]{Morita}. Utilizing the short exact sequence $0\to \mathbb{Z}\to \mathbb{R}\to\mathbb{T}\to 0$ associated to the universal covering of $\mathbb{T}$ the later is isomorphic to the second \v{C}ech cohomology group $H^2(C,\mathbb{Z})$ of $C$ with coefficients from $\mathbb{Z}$ \cite[Theorem 4.42]{Morita}. By Steenrod duality \cite{Steenrod}, and since $C$ is homotopy equivalent to a solenoid complement $S^{3}\setminus \Sigma$, $H^2(C,\mathbb{Z})$ isomorphic to $0$-th Steenrod homology group $H_0(C,\mathbb{Z})$. 

In \cite{BLP2019} it was shown that the \v{C}ech cohomology groups for locally compact metrizable spaces, as well as the Steenrod homology groups for compact metrizable spaces, are quotients of Polish $G$-spaces. Moreover all the computations described in the previous paragraph lift to Borel reductions on the level of Polish spaces; see \cite[Lemma 2.14, Theorem 3.12, and Section 5.5]{BLP2019}.  By \cite[Proposition 4.2]{BLP2019}  we have $(\mathrm{Bun}_{\mathbb{C}}(C),\simeq_{\mathrm{iso}}^{C})$  is Borel bireducible with the orbit equivalence relation of the action of $\mathbb{Z}$ on its dyadic profinite completion $\mathbb{Z}_{\boldsymbol{2}}$ by left-translation, which is Borel bireducible to the equivalence relation $(2^{\mathbb{N}},E_0)$ of eventual equality of binary sequences. It turns out that $(\mathrm{Bun}_{\mathbb{C}}(C),\simeq_{\mathrm{iso}})$ is also Borel bireducible to  $(2^{\mathbb{N}},E_0)$. Indeed, by Borel functoriality of the definable \v{C}ech cohomology---this is proved in \cite{BLPBook}, but for CW-complexes it can be checked by hand---the action of $\mathrm{Homeo}(C)$ on $C$ induces definable endomorphisms of 
\[0\to \mathbb{Z}\to \mathbb{Z}_{\boldsymbol{2}}\to \mathbb{Z}_{\boldsymbol{2}}/\mathbb{Z}\to 0,\]
as in \cite[Section 5.3]{BLP2019}.  It follows by \cite[Proposition 5.6]{BLP2019} that  $(\mathrm{Bun}_{\mathbb{C}}(C),\simeq_{\mathrm{iso}})$ is also Borel bireducible to  $(2^{\mathbb{N}},E_0)$.

Fix some point $p$ in $C$, say the one corresponding to $(1,0)\in \mathbb{T}_0\times[0,1]$, and let $B$ be the CW-complex which is attained by taking the disjoint union of $\mathbb{Z}$-many copies $(C_k)$ of $C$ and connecting the point $p$ of $C_k$ to the point $p$ of $C_{k+1}$ by gluing on them the endpoints of a homeomorphic copy of the interval $[0,1]$. Hence, $B$ is a $\mathbb{Z}$-line of intervals.  Every homeomorphism of $C$ acts on the indexing copy of $\mathbb{Z}$ by the group $\Gamma:=(\mathbb{Z}/2\mathbb{Z}) \rtimes \mathbb{Z}$ in the obvious way. It is easy to see that the $\Gamma$-jump $E^{[\Gamma]}_0$ of $E_0$ reduces to $(\mathrm{Bun}_{\mathbb{C}}(B),\simeq_{\mathrm{iso}})$. By Theorem \ref{Th:main} and Theorem \ref{Th:main2}  we have that  $E^{\Gamma}_0$ is not classifiable by TSI-group actions.
\end{proof}{}

\subsection{Morita equivalence of continuous-trace $C^*$-algebras}\label{SS:C^*}

In what follows we will only consider separable $C^*$-algebras $A$ whose spectrum $\widehat{A}$ is Hausdorff. This implies that  $\widehat{A}$ is  a locally compact metrizable space. By the Gelfand-Naimark theorem, the subclass of all such commutative $C^*$-algebras is ``locally-concretely" classified via the assignment $A\mapsto \widehat{A}$: every two commutative $C^*$-algebras with homeomorphic spectrum are isomorphic. 
The unique up to isomorphism such commutative   $C^*$-algebra of spectrum $S$ is simply the algebra $C_0(S,\mathbb{C})$, of all continuous maps from $S$ to $\mathbb{C}$ which vanish at infinity. 
It turns out the Borel complexity of similar ``local" classification problems increases drastically even in the case of \emph{continuous-trace} $C^*$-algebras, which is the closest it gets to being commutative. For more on the general theory of $C^*$-algebras than we provide here, see \cite{Morita, Blackadar}.

Let $S$ be a locally compact metrizable space and let $\mathcal{K}(\mathcal{H})$ be the $C^*$-algebra of all compact operators on the separable Hilbert space. Two  $C^*$-algebra $A,B$ with spectrum $S$ are {\bf Morita equivalent} if $A\otimes \mathcal{K}(\mathcal{H})$ and $B\otimes \mathcal{K}(\mathcal{H})$ are isomorphic as $C^*$-algebras. In general, this isomorphism may only preserve the spectrum up to homeomorphism. When this induced homeomorphism can be taken to be $\mathrm{id}_S$ then we say that $A$ and $B$ are {\bf Morita equivalent over $S$}. Any $C^*$-algebra $A$ with Hausdorff spectrum can be endowed with $C_0(S)$-module structure, where $C_0(S)$ is the collection of all continuous $f\colon S\to\mathbb{C}$ which vanish at infinity. As a consequence, $A$ and $B$ are Morita equivalent over $S$ if and only if they are isomorphic via a $C_0(S)$-linear map.

Let $\mathrm{CTr}^*(S)$ be the space of all {\bf continuous-trace $C^*$-algebras with spectrum $S$}. These are all $C^*$-algebras $A$, for which there is an open cover $\mathcal{U}$ of $S$ consisting of relatively compact sets so that, for all $U\in\mathcal{U}$, if $A^{\overline{U}}$ is the quotient algebra induced by $\overline{U}\subseteq S$, then $A^{\overline{U}}$ is Morita equivalent to $C(\overline{U},\mathbb{C})$ over $\overline{U}$; \cite[Proposition 5.15]{Morita}. In other words
continuous-trace $C^*$-algebras are precisely the algebras which are locally Morita equivalent to commutative. 
It turns out that any algebra $A\in \mathrm{CTr}^*(S)$ can be identified with a locally trivial fibre bundle over $S$, whose fibre $F$ is the $C^*$-algebra $\mathcal{K}(H)$ of all compact operators of a separable (potentially finite dimensional) Hilbert space, and the strcture group $G$ is $\mathrm{Aut}(\mathcal{K}(H))$; \cite[IV.1.7.7, IV.1.7.8]{Blackadar}. Hence, similarly to subsection \ref{SS:Bundles}, we may view $\mathrm{CTr}^*(S)$ as a Polish space. The space $\mathrm{CTr}_{Stable}^*(S)$ of all {\bf stable continuous-trace $C^*$-algebras with spectrum $S$} is the subspace $\mathrm{Bun}(B,\mathrm{Aut}(\mathcal{K}(\mathcal{H})),\mathcal{K}(\mathcal{H}))$  of $\mathrm{CTr}^*(S)$---where $\mathcal{H}$ is  the separable infinite dimensional Hilbert space.

We consider the following two  classification problems: let $(\mathrm{CTr}^*(S),\equiv_{\mathcal{M}})$ be the problem of classifying all elements of $\mathrm{CTr}^*(S)$ up to Morita equivalence; and let $(\mathrm{CTr}^*(S),\equiv_{\mathcal{M}}^S)$ be the problem of classifying all elements of $\mathrm{CTr}^*(S)$ up to Morita equivalence over $S$. By a result of \cite{BLP2019}, $\equiv_{\mathcal{M}}^S$ is  classifiable by TSI group actions:

\begin{proposition}[\cite{BLP2019}(Corollary 5.14.)]
The problem $(\mathrm{CTr}^*(S),\equiv_{\mathcal{M}}^S)$ is classifiable by non-Archimedean, abelian group actions.
\end{proposition}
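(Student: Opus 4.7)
The plan is to realize a complete definable invariant for $\equiv_{\mathcal{M}}^{S}$ taking values in a definable quotient of a non-Archimedean abelian Polish group, and then invoke the general framework of \cite{BLP2019}. Classical Dixmier--Douady theory assigns to each $A\in \mathrm{CTr}^{*}(S)$ an invariant $\delta(A) \in \check{H}^{3}(S,\mathbb{Z})$, equivalently in $\check{H}^{2}(S,\underline{\mathbb{T}})$, with the property that $A\equiv_{\mathcal{M}}^{S} B$ if and only if $\delta(A)=\delta(B)$. The first step is therefore to reduce to the stable case by using the Borel stabilization map $A\mapsto A\otimes \mathcal{K}(\mathcal{H})$, which sends $\mathrm{CTr}^{*}(S)$ into $\mathrm{CTr}^{*}_{Stable}(S)=\mathrm{Bun}(S,\mathrm{Aut}(\mathcal{K}(\mathcal{H})),\mathcal{K}(\mathcal{H}))$ while preserving $\equiv_{\mathcal{M}}^{S}$-classes; this reduces the problem to classifying the latter Polish space up to $\simeq_{\mathrm{iso}}^{S}$.

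The second step is to extract $\delta$ in a Borel way from the explicit bundle data of Subsection \ref{SS:Bundles}. Using $\mathrm{Aut}(\mathcal{K}(\mathcal{H}))\cong \mathrm{PU}(\mathcal{H})$, the transition cocycles $t_{(U,V)}$ of a stable continuous-trace algebra already constitute a \v{C}ech $1$-cocycle with values in the sheaf $\underline{\mathrm{PU}(\mathcal{H})}$. The connecting homomorphism of the exponential sequence $1\to \underline{\mathbb{T}}\to \underline{\mathrm{U}(\mathcal{H})}\to \underline{\mathrm{PU}(\mathcal{H})}\to 1$, combined with Kuiper's theorem on the contractibility of $\mathrm{U}(\mathcal{H})$, identifies $\check{H}^{1}(S,\underline{\mathrm{PU}(\mathcal{H})})$ with $\check{H}^{2}(S,\underline{\mathbb{T}})$, and each instance of the connecting map can be implemented on explicit cocycle representatives chosen uniformly from a fixed countable basis.

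The third step is to realize $\check{H}^{2}(S,\underline{\mathbb{T}})$ (or equivalently $\check{H}^{3}(S,\mathbb{Z})$) as the orbit equivalence relation of a non-Archimedean abelian Polish group action. Fix a countable basis $\mathcal{B}$ of $S$ closed under finite intersections. The group of \v{C}ech $n$-cochains with $\mathbb{Z}$-coefficients supported on $\mathcal{B}$ is a countable product of copies of $\mathbb{Z}$, hence a non-Archimedean abelian Polish group; the coboundary map is a continuous homomorphism, and the group of $n$-cocycles is a closed subgroup. Passing to the colimit over refinements, as performed in \cite{BLP2019}, yields a Polish presentation of \v{C}ech cohomology as the orbit equivalence of a non-Archimedean abelian Polish group acting on a Polish space by translations. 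Composing the Borel maps from the previous two steps with this presentation completes the Borel reduction witnessing classifiability by non-Archimedean abelian group actions.

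The hard part will be Step 2, together with making Step 3 fully descriptive-set-theoretic: one must ensure both that the non-abelian \v{C}ech cocycle extracted from a bundle and the ensuing Bockstein connecting map to a $\mathbb{Z}$-valued cocycle are implemented by Borel maps uniformly in the trivializing cover, and that the inverse-limit/refinement passage on cochain groups can be carried out within the category of non-Archimedean abelian Polish groups. Both points are resolved by the standard refinement comparison between cocycles over different locally finite sub-covers drawn from $\mathcal{B}$, as worked out in \cite{BLP2019}.
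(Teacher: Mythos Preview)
The paper does not prove this proposition: it is stated with a bare citation to \cite{BLP2019} (Corollary 5.14) and no proof environment follows. So there is nothing in the paper to compare your argument against line by line. That said, your outline is faithful to the strategy the paper invokes when it later uses this result (see the proof of the corollary immediately after, which appeals to the Dixmier--Douady classification and to the fact from \cite{BLP2019} that the relevant \v{C}ech cohomology groups are definable quotients of non-Archimedean abelian Polish groups). Your three-step plan---stabilize, extract the Dixmier--Douady class via the $\mathrm{PU}(\mathcal{H})$ cocycle and the Bockstein map, then present $\check{H}^3(S,\mathbb{Z})$ as an orbit space of a non-Archimedean abelian Polish group---is exactly the content of the cited corollary in \cite{BLP2019}, so your proposal is correct in spirit and matches the intended reference rather than offering a new route.
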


In contrast, for the relation  $\equiv_{\mathcal{M}}$ we have the following result.

\begin{corollary}
There exists a locally compact metrizable topological space $S$, so that $(\mathrm{CTr}^*(S),\equiv_{\mathcal{M}})$ is not classifiable by TSI group actions. In fact, $S$ can be taken to be the geometric realization of a countable, locally-finite, CW-complex.
\end{corollary}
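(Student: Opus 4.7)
The argument follows the blueprint of Corollary \ref{Cor:HermiNotTSI}, with the cohomology group $H^{2}(\cdot,\mathbb{Z})$ classifying Hermitian line bundles over a fixed base replaced by the third integral cohomology $H^{3}(\cdot,\mathbb{Z})$, which houses the Dixmier--Douady invariant classifying stable continuous-trace $C^{*}$-algebras up to Morita equivalence over a fixed spectrum. The first step is to build a locally-finite CW-complex $T$ whose $H^{3}$ realizes the $\mathbb{Z}$-action on the dyadic profinite completion $\mathbb{Z}_{\boldsymbol{2}}$ in the same definable way that $H^{2}(C,\mathbb{Z})$ did in Corollary \ref{Cor:HermiNotTSI}. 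I would take the exact $3$-dimensional analog of $C$ with spheres $S^{3}_{i}$ replacing the circles $\mathbb{T}_{i}$ and with degree-$2$ self-maps $S^{3}\to S^{3}$ replacing $z\mapsto z^{2}$; gluing $(z,0)\in S^{3}_{i+1}\times[0,1]$ to $(z^{\cdot 2},1)\in S^{3}_{i}\times[0,1]$ yields a locally-finite CW-complex $T$ homotopy-equivalent to the complement in $S^{4}$ of the corresponding $3$-dimensional dyadic solenoid $\Sigma_{3}$. Steenrod duality gives $H^{3}(T,\mathbb{Z})\cong H_{0}(\Sigma_{3},\mathbb{Z})$, and combined with the short exact sequence $0\to\mathbb{Z}\to\mathbb{Z}_{\boldsymbol{2}}\to\mathbb{Z}_{\boldsymbol{2}}/\mathbb{Z}\to 0$, the Borel-functoriality results of \cite{BLP2019}, and the Dixmier--Douady classification, one obtains that $(\mathrm{CTr}^{*}_{\mathrm{Stable}}(T),\equiv_{\mathcal{M}}^{T})$ is Borel bireducible with the translation action of $\mathbb{Z}$ on $\mathbb{Z}_{\boldsymbol{2}}$.

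Next, I would decoordinatize and then stack. Following the last paragraph of the proof of Corollary \ref{Cor:HermiNotTSI}, the induced action of $\mathrm{Homeo}(T)$ on $0\to\mathbb{Z}\to\mathbb{Z}_{\boldsymbol{2}}\to\mathbb{Z}_{\boldsymbol{2}}/\mathbb{Z}\to 0$---available via the Borel functoriality of definable \v{C}ech cohomology, which for CW-complexes can be verified directly---together with \cite[Proposition 5.6]{BLP2019} yields that $(\mathrm{CTr}^{*}_{\mathrm{Stable}}(T),\equiv_{\mathcal{M}})$ is Borel bireducible with $(2^{\mathbb{N}},E_{0})$. Now I glue $\mathbb{Z}$-many disjoint copies $(T_{k})_{k\in\mathbb{Z}}$ of $T$ into a chain, attaching each $T_{k}$ to $T_{k+1}$ by a unit interval joining chosen basepoints; the resulting locally-finite CW-complex $S$ has self-homeomorphisms permuting the copies by the infinite dihedral group $\Gamma=(\mathbb{Z}/2\mathbb{Z})\rtimes\mathbb{Z}$. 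The assignment of independent stable continuous-trace algebras to the individual copies $T_{k}$ supplies a Borel reduction from the $\Gamma$-jump $E_{0}^{[\Gamma]}$ into $(\mathrm{CTr}^{*}(S),\equiv_{\mathcal{M}})$, and by Theorem \ref{Th:main2} together with Corollary \ref{Cor:Main}, $E_{0}^{[\Gamma]}$---and therefore $(\mathrm{CTr}^{*}(S),\equiv_{\mathcal{M}})$---is not classifiable by TSI-group actions.

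The main obstacle is the definability of the Dixmier--Douady correspondence: one needs $A\mapsto\delta(A)$ and its inverse to be realized as Borel, $\mathrm{Homeo}(T)$-equivariant maps between the Polish realization of $\mathrm{CTr}^{*}_{\mathrm{Stable}}(T)$ as $\mathrm{Bun}(T,PU(\mathcal{H}),\mathcal{K}(\mathcal{H}))$ and the definable \v{C}ech cohomology group $H^{3}(T,\mathbb{Z})$ from \cite{BLP2019,BLPBook}. Since the Dixmier--Douady class of a stable continuous-trace $C^{*}$-algebra is computed from the same \v{C}ech $1$-cocycle data as the first Chern class---only one degree higher, via the nonabelian $PU(\mathcal{H})$-cocycle together with the connecting homomorphisms of $\mathbb{T}\to U(\mathcal{H})\to PU(\mathcal{H})$ and $0\to\mathbb{Z}\to\mathbb{R}\to\mathbb{T}\to 0$ (with $U(\mathcal{H})$ and $\mathbb{R}$ contractible)---I expect this to be a direct adaptation of the Hermitian line bundle argument of \cite[Section 5.5]{BLP2019}.
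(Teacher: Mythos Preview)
Your overall strategy---pass to stable algebras, invoke Dixmier--Douady, build a space whose third cohomology carries the $\mathbb{Z}\curvearrowright\mathbb{Z}_{\boldsymbol{2}}$ structure, then stack $\mathbb{Z}$-many copies to produce a $\Gamma$-jump---matches the paper. But the choice of building block is off by one dimension, and this is fatal.

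The paper does \emph{not} rebuild from scratch with higher spheres: it takes $D$ to be the \emph{suspension} $\Sigma C$ of the space $C$ already analyzed in Corollary~\ref{Cor:HermiNotTSI}, so that the suspension isomorphism gives $H^{3}(D,\mathbb{Z})\cong H^{2}(C,\mathbb{Z})$ and the entire cohomological computation is inherited verbatim from the Hermitian-bundle case. Then $S$ is assembled from $D$ exactly as $B$ was from $C$.

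Your $T$, the telescope of degree-$2$ maps on $S^{3}$, has $H^{3}(T,\mathbb{Z})=0$. Indeed $H_{3}(T)=\varinjlim(\mathbb{Z}\xrightarrow{\times 2}\mathbb{Z}\to\cdots)=\mathbb{Z}[1/2]$ and all other reduced homology vanishes, so by universal coefficients $H^{3}(T)=\mathrm{Hom}(\mathbb{Z}[1/2],\mathbb{Z})=0$ while the interesting $\mathrm{Ext}(\mathbb{Z}[1/2],\mathbb{Z})\cong\widehat{\mathbb{Z}}_{2}/\mathbb{Z}$ sits in $H^{4}$. Equivalently, on the duality side: for the inverse limit $\Sigma_{3}=\varprojlim S^{3}$ the Milnor sequence gives $\tilde{H}^{\mathrm{St}}_{0}(\Sigma_{3})\cong\varprojlim{}^{1}H_{1}(S^{3})=0$, since $H_{1}(S^{3})=0$. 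The nontriviality of $\tilde{H}^{\mathrm{St}}_{0}$ for the classical solenoid comes precisely from $H_{1}(S^{1})=\mathbb{Z}$; that mechanism disappears for $S^{3}$. So your isomorphism $H^{3}(T)\cong H_{0}(\Sigma_{3})$ is correct but both sides are trivial, and the subsequent identification with $\mathbb{Z}\curvearrowright\mathbb{Z}_{\boldsymbol{2}}$ does not go through.

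The fix is to move up one dimension, not two: either replace $S^{3}$ by $S^{2}$ (so that $H_{2}=\mathbb{Z}[1/2]$ and $H^{3}=\widehat{\mathbb{Z}}_{2}/\mathbb{Z}$), or---more economically, as the paper does---take the single suspension $\Sigma C$. Once that is corrected, the rest of your outline (including your careful remark on the definability of the Dixmier--Douady map) is in line with the paper's argument.
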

\begin{proof}
First notice that the Borel map implementing $A\mapsto A\otimes \mathcal{K}(\mathcal{H})$, is a Borel reduction, inducing a bijection from $\mathrm{CTr}^*(S)/\equiv_{\mathcal{M}}$ to $\mathrm{CTr}_{Stable}^*(S)/\equiv_{\mathcal{M}}$. Hence, it sufffices to consider the problem  $(\mathrm{CTr}_{Stable}^*(S),\equiv_{\mathcal{M}})$ instead.

By the Dixmier-Douady classification theorem  we have that $\mathrm{CTr}_{Stable}^*(S)/\equiv_{\mathcal{M}}^S$ is in bijective correspondence with the third \v{C}ech cohomology group $H^3(S,\mathbb{Z})$ of $S$ with coefficients from $\mathbb{Z}$; see \cite[Theorem 5.29]{Morita}. It follows that $\mathrm{CTr}_{Stable}^*(S)/\equiv_{\mathcal{M}}$ is in bijective correspondence with $H^3(S,\mathbb{Z})/\Gamma$, where $\Gamma$ is the group of all automorphism of  $H^3(S,\mathbb{Z})$ induced by the action of $\mathrm{Homeo}(S)$ on $H^3(S,\mathbb{Z})$; see \cite[IV.1.7.15]{Blackadar}.  Similarly to the proof of Corollary \ref{Cor:HermiNotTSI}, the Dixmier-Douady correspondence and all the cohomological manipulations lift to Borel reductions on the level of a appropriate Polish spaces; see \cite{BLP2019}. The rest of the proof follows as in Corollary \ref{Cor:HermiNotTSI}: let  $D$ be the suspension $C\times [0,1]/\sim$ of the space $C$ which we defined in the proof of Corollary \ref{Cor:HermiNotTSI}, and let $S$ be attained from $D$ in the same way that $B$ was attained from $C$ in the same proof. Notice that by properties of the suspension we have that $H^3(D,\mathbb{Z})=H^2(C,\mathbb{Z})$.
\end{proof}

\begin{remark}
The complexity of the $(\mathrm{CTr}^*(S),\equiv_{\mathcal{M}})$ has been studied in \cite{BLPBook} for several spaces $S$. For example, it is shown that, when $S$ is the homotopy limit coming from the defining inverse system of a $d$-dimensional solenoid, then $(\mathrm{CTr}^*(S),\simeq^{\mathrm{Mor}})$ is always essentially countable but, when $d\geq 2$, it is not essentially treeable.
\end{remark}{}

\section{The space between TSI and CLI}\label{S:final}

With Corollary \ref{Cor:2} we established that the class of CLI Polish groups can produce strictly more complicated orbit equivalence relations than the class of TSI groups from the point of view of Borel (or even Baire-measurable) reductions. The obvious question is how many different complexity classes lie between the class of all classification problems which are classifiable by TSI-group actions and  the ones  which are classifiable by CLI-group actions. In this final section we illustrate how the methods we developed here can be adapted to show that there is an $\omega_1$-sequence of strictly increasing complexity classes. The discussion here will be informal since the details will be provided in an upcoming paper.

Let $\Xspace$ be a Polish $\Gspace$ and recall the unbalanced graph relation $\leftrightsquigarrow$ that we defined between pairs of points of $\Xspace$. In the context of the next definition we may refer to it as the {\bf $1$-unbalanced} relation and we denote it by $\leftrightsquigarrow^1_{\Gspace}$.

\begin{definition}
Let $\Xspace$ be a Polish $\Gspace$-space, let $V$ be an open neighborhood of the identity of $\Gspace$, and let $\alpha<\omega_1$. We define the relation $\leftrightsquigarrow^{\alpha}_{V}$ on $\Xspace$ by induction. Let 
\begin{enumerate}
    \item $x \leftrightsquigarrow^0_{V}y$, if $y\in \overline{V x}$ and $x\in \overline{V y}$;
    \item $x \leftrightsquigarrow^\alpha_{V}y$, if for every open neighborhood $W$ of the identity of $\Gspace$, and every open set $U\subseteq \Xspace$ having non-empty intersection with the orbit of $x$ or $y$, there exist $v^x, v^{y}\in V$ with $v^x x\in U$ and $v^y y\in U$, so that $v^y y    \leftrightsquigarrow^\beta_{W}v^x x$, for all $\beta<\alpha$. 
\end{enumerate}
The {\bf $\alpha$-unbalanced graph} associated to  $\Gspace\curvearrowright\Xspace$  is the graph $(\Xspace/\Gspace,\leftrightsquigarrow^{\alpha}_{\Gspace})$.
\end{definition}

\begin{wrapfigure}{r}{5cm}
\begin{center}

\begin{tikzpicture}[xscale=0.9, yscale =0.5]

\node (b) at (0,0) {};
\node (e) at (2, 10) {CLI};
\draw (b) .. controls (4,7) .. (e);
\draw (b) .. controls (-2,5) .. (e);

\node (t) at (1, 5) {TSI};
\draw (b) .. controls (2,3.5) .. (t);
\draw (b) .. controls (-1,2.5) .. (t);

\node (t) at (1.5, 7.5) {$\alpha$-balanced};
\draw (b) .. controls (3,5.25) .. (t);
\draw (b) .. controls (-1.5,3.75) .. (t);
\end{tikzpicture}
\end{center}
\end{wrapfigure}

In \cite{Malicki2011}, Malicki used iterated Wreath products to define an $\omega_1$-sequence $(P_{\alpha}\mid\alpha<\omega_1)$  of Polish permutation groups. Using this sequence Malicki established that the collection of all CLI group forms a coanalytic non-Borel subset of the standard Borel space of all Polish groups. Using the techniques we developed here one may show that the $\alpha$-unbalanced graph of the Bernoulli shift of $P_{\alpha}$ is generically semi-connected (see Section \ref{Intro_2}) and that any 
orbit equivalence relation with 
generically semi-connected $\beta$-unbalanced graph is generically ergodic for actions of $P_{\alpha}$, when $\alpha<\beta$. 
Moreover, in a certain weak sense, these complexity classes are cofinal in the class of all orbit equivalence relations of CLI groups:   if for any pair $x,y$ of elements of a Polish $\Gspace$-space $\Xspace$ we have $x \leftrightsquigarrow^\alpha_{\Gspace}y$ for all countable ordinals $\alpha$, yet $y \not\in \Gspace x$, then $\Gspace$  cannot be CLI.


\bibliography{main}
\bibliographystyle{alpha}


\end{document}